\documentclass[fontsize=11pt,reqno]{amsart}

\usepackage{amsmath}
\usepackage{amsthm}
\usepackage{amssymb}
\usepackage{mathtools}
\usepackage{mathrsfs}
\usepackage{physics}
\usepackage[margin=1in]{geometry}
\usepackage{scrextend}
\usepackage{tikz-cd}
\usepackage[shortlabels]{enumitem}
\usepackage{colonequals}
\usepackage{hyperref}
\usepackage{comment}
\usepackage{setspace}

\linespread{1.0}
\usepackage[notcolon, notquote]{hanging}

\usepackage[title]{appendix}

\setlength{\footskip}{35pt}

\setlength{\parskip}{.13in minus .03in} 
\setlength{\parindent}{.3 in}

\setlist[enumerate]{topsep=0pt,itemsep=-0.5ex,partopsep=1ex,parsep=1ex}

\setlist[itemize]{topsep=0pt,itemsep=-0.5ex,partopsep=1ex,parsep=1ex}

\renewcommand{\P}{\mathbb{P}}
\newcommand{\A}{\mathbb{A}}
\newcommand{\C}{\mathbb{C}}
\newcommand{\Z}{\mathbb{Z}}
\newcommand{\Q}{\mathbb{Q}}
\newcommand{\N}{\mathbb{N}}

\newcommand{\I}{\mathdutchcal{I}}
\renewcommand{\O}{\mathdutchcal{O}}
\newcommand{\X}{\mathdutchcal{X}}
\newcommand{\E}{\mathdutchcal{E}}

\newtheorem{thm}{Theorem}[section]
\newtheorem{theorem}[thm]{Theorem}

\newtheorem{lemma}[thm]{Lemma}
\newtheorem{proposition}[thm]{Proposition}
\newtheorem{prop}[thm]{Proposition}

\theoremstyle{definition}
\newtheorem{defn}[thm]{Definition}
\newtheorem{situation}[thm]{Situation}

\theoremstyle{remark}
\newtheorem{remark}[thm]{Remark}
\newtheorem{question}[thm]{Question}
\newtheorem{problem}[thm]{Problem}
\newtheorem{example}[thm]{Example}

\theoremstyle{plain}
\newtheorem{Lthm}{Theorem}

\DeclareMathOperator{\covdeg}{\mathrm{cov.deg}}
\DeclareMathOperator{\cd}{\mathrm{cd}}

\DeclareMathOperator{\cg}{cov.gon}
\DeclareMathOperator{\covgon}{cov.gon}

\newcommand{\struct}[1]{\mathdutchcal{O}_{#1}}
\newcommand{\ot}{\otimes}
\newcommand{\ol}[1]{\overline{#1}}

\newcommand{\iso}{\xrightarrow{\sim}}
\newcommand{\sm}{\setminus}

\newcommand{\wt}[1]{\widetilde{#1}}

\newcommand{\mb}[1]{\mathbb{#1}}

\newcommand{\mtc}[1]{\mathdutchcal{#1}}

\DeclareMathOperator{\Spec}{Spec}

\DeclareMathOperator{\mult}{mult}
\DeclareMathOperator{\Supp}{Supp}

\DeclareMathOperator{\Pic}{Pic}
\DeclareMathOperator{\Hom}{Hom}

\DeclareMathOperator{\ord}{ord}

\DeclareMathOperator{\codim}{codim}

\DeclareMathOperator{\lct}{lct}

\DeclareMathOperator{\LC}{LC}

\DeclareMathOperator{\Sym}{Sym}

\newcommand{\divides}{\mid}

\newcommand{\strusheaf}[1]{\mathdutchcal{O}_{#1}}

\newcommand{\sheafHom}{\mathscr{H}\text{\kern -3pt {\calligra\large om}}\,}

\DeclareMathOperator{\reg}{reg}

\DeclareMathOperator{\divisor}{\mathrm{div}}

\newcommand{\bC}{{\mathbb C}}
\newcommand{\bQ}{{\mathbb Q}}
\newcommand{\bP}{{\mathbb P}}

\DeclareMathOperator{\irr}{irr}

\DeclareMathOperator{\gon}{gon}
\DeclareMathOperator{\cov}{cov.gon}

\DeclareMathOperator{\NKLT}{NKLT}

\usepackage{scalerel}
\usepackage{calligra, mathrsfs}
\DeclareMathOperator{\calHom}{\mathcal{H}\text{om}}

\newcommand{\m}{\mathfrak{m}}

\setcounter{tocdepth}{1}

\newcommand\cL{\mathdutchcal{L}}

\newcommand\cW{{\mathcal{W}}}
\newcommand\cC{\mathcal{C}}

\renewcommand{\L}{\mathdutchcal{L}}
\newcommand{\CC}{\mathbb{C}}

\newcommand{\cX}{\mathdutchcal{X}}
\newcommand{\cZ}{\mathdutchcal{Z}}
\newcommand{\ul}[1]{\underline{#1}}
\newcommand{\Mbar}{\ol{\mathcal{M}}} 
\newcommand{\cU}{\mathdutchcal{U}}

\newcommand{\cO}{\mathdutchcal{O}}

\usepackage{float}

\makeatletter
\newcommand{\chapterauthor}[1]{%
  {
    \parindent0pt\vspace*{-10pt}%
    \linespread{1}\scshape by #1%
  }
}
\makeatother


\DeclareMathAlphabet{\mathdutchcal}{U}{dutchcal}{m}{n}
\SetMathAlphabet{\mathdutchcal}{bold}{U}{dutchcal}{b}{n}
\DeclareMathAlphabet{\mathdutchbcal}{U}{dutchcal}{b}{n}

\title{Curves on complete intersections and measures of irrationality}
\author[Nathan Chen, Benjamin Church, and Junyan Zhao]{Nathan Chen, Benjamin Church, and Junyan Zhao \\ with an Appendix by Mohan Swaminathan}

\subjclass[2020]{Primary: 14D06. Secondary: 14J70}

\allowdisplaybreaks


\begin{document}

\begin{abstract}
We study the minimal degrees and gonalities of curves on complete intersections. We prove that the degree of any curve on a general complete intersection $X \subseteq \mathbb{P}^N$ of large multidegree is bounded from below by the degree of $X$. As an application, we answer a problem of Bastianelli--De Poi--Ein--Lazarsfeld--Ullery on measures of irrationality for complete intersections.
\end{abstract}
\maketitle

\begin{spacing}{0.1}
\tableofcontents
\end{spacing}

\section[tocentry = {TEST}]{Introduction}

Given a complex projective variety $X \subseteq \P^N$, it is always covered by curves obtained by taking intersections with linear subspaces. Often it is fruitful to ask: are these the ``simplest'' curves on $X$? More precisely, we ask if there are curves of smaller degree, genus, or gonality. For $X$ a general complete intersection, we make a significant step towards showing there are none. This verifies two open conjectures regarding curves on general complete intersections.

\subsection*{Degrees of curves}
Our first result establishes a folklore conjecture (cf. \cite{Wu90} and \cite[Conjecture 3.10]{Chen24}).

\begin{Lthm} \label{thm:degree_of_any_curve}
Let $X \subseteq \P^{n+r}$ be a general complete intersection variety of dimension $n \geq 1$ cut out by polynomials of degrees $d_{1}, \ldots, d_{r} \geq 2n-1$. Then any curve $C \subseteq X$ satisfies
\[ \deg(C)\ \ge\ (d_1 - 2n+2) \cdots (d_r - 2n+2) .\]
Moreover, there exists\footnote{In fact one can choose $N = 2^{27r(n+r)^{2}} \cdot (n+r)^{6(n+r)}$.} an integer $N \coloneqq N(n,r)$ such that if $d_1, \dots, d_r \ge N$, then 
\[ \deg(C)\ \ge\ d_1 \cdots d_r. \]
\end{Lthm}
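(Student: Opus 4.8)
The plan is to spread the hypothetical low-degree curve over the universal family of complete intersections and differentiate the defining equations, following the circle of ideas of Clemens, Ein, Voisin and Xu; the product shape of the bound should come from the product structure of the parameter space. Set $M = \mathbb{P}^{n+r}$, $H = \mathcal{O}_M(1)$, let $S \subseteq \prod_{i=1}^{r}|\mathcal{O}_M(d_i)|$ be the open locus parametrizing smooth $n$-dimensional complete intersections, and let $\mathcal{X} \subseteq M \times S$ be the universal family, with $p\colon \mathcal{X} \to M$ (smooth, with fibres products of projective spaces) and $\pi\colon \mathcal{X} \to S$ (with fibres the $X_s$). If the first inequality failed on a dense subset of $S$, then some component of the relative Hilbert scheme of curves in $\mathcal{X}/S$ would dominate $S$ with general member of degree $e < \prod_i (d_i - 2n + 1)$; pulling back the universal curve gives a variety $\mathcal{C}$ with a projective morphism $\rho\colon \mathcal{C} \to B$, a dominant $B \to S$, $\dim \mathcal{C} \ge \dim S + 1$, and fibres $C_b$ that are degree-$e$ curves on the corresponding $X_s$. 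Write $q\colon \mathcal{C} \to M$ for the projection.

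\emph{Reduction to the dominant case.} If $q$ is not dominant, put $W = \overline{q(\mathcal{C})}$. Since $\{s : w \in X_s\}$ has codimension exactly $r$ in $S$ for every $w \in M$, comparing $\dim \mathcal{C}$ with the generic fibre dimension of $q\colon \mathcal{C} \to W$ forces $\dim W \ge r + 1$; intersecting with a general $X_s$ then places all the curves $C_b$ on the $(\dim W - r)$-dimensional complete-intersection slice $W \cap X_s$ (of the same multidegree, inside $W$), so a descending induction on $\dim W$ reduces us to the case that $q\colon \mathcal{C} \to M$ is dominant. This is essentially bookkeeping, its only subtlety being to ensure that no ``fixed'' lower-dimensional locus can carry the whole family.

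\emph{The differentiation step.} Assume $q$ dominant, fix a general $b_0 \in B$, let $C = C_{b_0}$ with normalization $\nu\colon \widetilde{C} \to C \subseteq M$ and $L := \nu^* H$, so $\deg L = e$. The identities $f_i \circ q \equiv 0$ on $\mathcal{C}$, differentiated along the $S$-directions, produce for each $i$ and each $\dot f_i \in H^0(M, \mathcal{O}_M(d_i))$ a section of a twist of $\nu^* q^* T_M$ on $\widetilde{C}$ pairing with $df_i$ to $-(\dot f_i \circ q)$ in $H^0(\widetilde{C}, L^{\otimes d_i})$. The essential input is a positivity lemma for the universal complete intersection, in the tradition of Ein and Voisin: the resulting bundle of first-order variations on $\mathcal{X}$, suitably twisted by $p^* H$, separates $2n$ general points (equivalently, generates $2n$-jets) along $\widetilde{C}$. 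The constant $2n = 2\dim X$ is precisely the second-order defect in the global generation of $T_{\mathbb{P}^{n+r}}$ along a curve (equivalently the positivity defect of $\Omega^1$), the same defect underlying the Bastianelli--De Poi--Ein--Lazarsfeld--Ullery bounds. Because $S = \prod_i |\mathcal{O}_M(d_i)|$ is a product over the $r$ equations and the relevant sections and syzygies split accordingly, the resulting jet-separation estimates combine \emph{multiplicatively} over $i$; comparing the bound they yield with $h^0(\widetilde{C}, L^{\otimes d_i}) \le e\, d_i + 1$ forces $e \ge \prod_i (d_i - 2n + 1)$, a contradiction. The same analysis also produces a lower bound on the genus of $C$ in terms of $e$ and the $d_i$.

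\emph{The bound $\prod d_i$, and the main obstacle.} For $d_i \ge N(n,r)$ one upgrades to the sharp bound: knowing already that every curve has degree within a bounded factor of $\deg X = \prod d_i$, the degree and genus of a hypothetical violating curve are confined to a narrow range, and one either pushes the jet-separation estimate one order further (the extra room available when all $d_i$ are large removes the per-factor loss of $2n - 1$) or rules out the remaining curves directly --- now the auxiliary genus lower bound is exactly what is needed to control the high-genus, obstructed families that a naive dimension count would miss. When $n = 2$ this last step is Noether--Lefschetz, and in general it is its higher-dimensional analogue. The crux of the whole argument is the differentiation step: establishing generic $2n$-jet separation for the universal complete intersection with the exact constant, and --- most delicately --- arranging the estimates over the $r$ factors of $S$ to multiply, which is what turns an additive bound of the shape $\sum_i (d_i - 2n + 1)$ into the product $\prod_i (d_i - 2n + 1)$.
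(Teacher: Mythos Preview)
Your proposal has a genuine gap at exactly the point you flag as ``the crux'': the claim that the jet-separation estimates coming from differentiating the $r$ universal equations combine \emph{multiplicatively} over $i$ is not justified, and in fact this is precisely where the Clemens--Ein--Voisin--Xu circle of ideas is known to stall. The normal bundle of a complete intersection is $\bigoplus_i \mathcal{O}(d_i)|_X$, a direct sum, and the differentiation step produces, for each $i$ separately, sections living in $H^0(\widetilde C, L^{\otimes d_i})$ subject to constraints of size roughly $d_i - O(n)$. These are $r$ \emph{independent} additive inequalities of the shape $e \cdot d_i \ge \text{(something linear in }d_i)$; there is no mechanism in your outline that couples them into a single inequality of product type. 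The paper states explicitly that the methods of \cite{BDELU17}---which are exactly the positivity/differentiation methods you invoke---only yield bounds additive in the $d_i$. Your final paragraph essentially concedes this by calling the multiplicative combination the most delicate step while offering no argument for it.

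The paper's proof proceeds along entirely different lines. First, it bounds the \emph{covering degree} of a general complete intersection (Theorem~\ref{thm:main_covering_degree}) by degenerating one hypersurface $X_{d_1}$ to a union $X_a \cup X_b$ with $a+b=d_1$; a breaking lemma for stable maps to SNC degenerations (Lemma~\ref{lemma:breaking}, via the relative-GW-theoretic Lemma~\ref{lemma:divisorial_multiplicity_matching}) forces a covering curve on the generic fiber to specialize either to a curve covering $X_a \cap X_b$ or to break into pieces covering both $X_a$ and $X_b$. This yields the recursion of Theorem~\ref{thm:inductive_bound}, and a simultaneous induction on $(n,r)$ gives the product bound $\prod_i(d_i-n+1)$ for the covering degree. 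Second, the Riedl--Yang Grassmannian technique \cite{RY22} converts a covering-degree bound in dimension $2n$ into a bound on the degree of \emph{any} curve in dimension $n$, producing the stated $\prod_i(d_i-2n+1)$. The sharp bound $\prod_i d_i$ for large $d_i$ comes from a further combinatorial argument using Paulsen's divisibility theorem (Theorem~\ref{thm:Paulsen}) as a base case and multi-superadditivity of $\cd_{n,r}$ once the explicit bound is strong enough. None of these ingredients appear in your sketch.
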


\noindent By slicing with hyperplanes, note that the same degree bound appearing in Theorem~\ref{thm:degree_of_any_curve} holds for any positive dimensional subvariety of $X$. For hypersurfaces in $\bP^4$ ($n = 3$ and $r = 1$), the statement of Theorem~\ref{thm:degree_of_any_curve} follows from a theorem of Wu \cite{Wu90}, and the higher dimensional case for hypersurfaces was recently shown by the first author and Yang \cite{CY25}.

The story begins with the classical theorem of Noether--Lefschetz \cite{Lefschetz24}, which states that if $d_{1}+ \cdots + d_{r} \geq r+3$, then a very general complete intersection surface $X \subseteq \P^{r+2}$ of multidegree $(d_{1}, \ldots, d_{r})$ satisfies $\Pic(X) = \Z\langle \O_{X}(1) \rangle$. In particular, this implies that for any curve $C \subseteq X$, its degree $\deg(C)$ is an integer multiple of $d_{1} \cdots d_{r}$. Motivated by the Noether--Lefschetz theorem, Griffiths--Harris later made a series of conjectures concerning curves on very general hypersurfaces in $\P^{4}$ \cite{GH85}. The strongest form of their conjecture was disproved by Voisin \cite{Voisin89}, but one notable case that remains open predicts that on a threefold hypersurface $X \subseteq \P^{4}$ of degree $d \geq 6$, the degree of any curve $C \subseteq X$ satisfies $d \divides \deg{C}$.  Significant progress in this direction was initiated by Koll\'{a}r \cite{Kollar90}, who used degeneration methods to give counterexamples to the integral Hodge conjecture for smooth hypersurfaces. More recently, Paulsen refined these degeneration methods to show that for a positive density set of integers, the conjecture of Griffiths--Harris (and its generalization to higher-dimensional hypersurfaces) is true \cite{Paulsen21}. 

\subsection*{Measures of irrationality}

Another motivation for studying curves on varieties is as a measure of complexity. In recent years, measures of irrationality have emerged as interesting and subtle birational invariants for projective varieties, generalizing the notion of gonality for curves to higher dimensions. For a projective variety $X$ of dimension $n$, the \emph{degree of irrationality} and the \emph{covering gonality} are defined as follows:
\[ \irr(X)\ \coloneqq\ \min\big\{\delta>0\ |\ \exists\textup{ rational dominant map } X\dashrightarrow \mb{P}^n\textup{ of degree }\delta\big\}; \]
\[ \cov(X)\ \coloneqq\ \min\big\{c>0\ |\ \exists\textup{ a curve of gonality } c \textup{ through a general point } x\in X\big\}.\]
From their descriptions, we see that the degree of irrationality is a measure of how far $X$ is from being rational, while the covering gonality is a measure of how far $X$ is from being uniruled. These two quantities are related by: $\irr(X) \geq \cg(X)$. For an overview of the subject, we refer the reader to \cite{BDELU17} and the many references therein.

Given a smooth projective non-degenerate variety $X\subseteq\mb{P}^{n+r}$ of dimension $n>0$ and degree $d$, an elementary observation is that by generic projection from points on $X$, we always have $\irr(X) \leq d-r$. The most challenging difficulty in computing $\irr(X)$ is often finding obstructions to the existence of low degree maps $X \dashrightarrow \P^{n}$. For very general hypersurfaces of large degree (i.e. $r = 1$), we now have a fairly complete picture. In \cite{BCFS18}, the authors computed their covering gonality. Around the same time, \cite{BCD14, BDELU17} proved that if $X$ is a very general hypersurface of sufficiently large degree, then $\irr(X)=d-1$; in fact, they also showed that any dominant map $X\dashrightarrow \mb{P}^n$ of degree $d-1$ is given by projection from a point on $X$. The main idea that appears in these papers is to use the positivity of the canonical bundle $K_{X}$. Note that some of these results concerning the covering gonality have been extended to positive characteristic \cite{Smith20}.

For a (very) general complete intersection variety $X\subseteq \mb{P}^{n+r}$ cut out by polynomials of degrees $d_{1}, \ldots, d_{r}$, it is natural to ask how both invariants behave. The methods of \cite{BDELU17} only yield lower bounds that are additive in the $d_{i}$'s. However, the authors of \emph{loc}.\ \emph{cit}.\ conjectured that the degree of irrationality should behave \textit{multiplicatively} in the degrees, namely there should be a positive constant $C(n,r)$ depending only on $n$ and $r$ such that
\begin{equation}\label{eq:multbound}
\irr(X) \ \geq\ \cg(X) \ \geq\ C(n,r) \, d_1\cdots d_r,
\end{equation}
for sufficiently large degrees $d_{i}$ (see \cite[Problem 4.1]{BDELU17}). From the previous paragraph, it follows that $\irr(X) \leq d_{1} \cdots d_{r} - r$ so we must always have $C(n,r) < 1$. 
\par 
Our second result fully resolves this problem by exhibiting asymptotically sharp bounds:

\begin{Lthm} \label{thm:main_gonality}
Let $X \subseteq \P^{n+r}$ be a general complete intersection variety of dimension $n$ cut out by polynomials of degrees $d_{1}, \ldots, d_{r} \ge n$.  Then
\[ \covgon(X) \ge (d_1 - 2 (n+1)  \sqrt{d_1})(d_2 - n + 1) \cdots (d_r - n + 1) + 1. \]
\end{Lthm}

\noindent 
In particular, for any $\epsilon > 0$, there is an integer $N(\epsilon ; n , r)$ such that when $d_i \ge N(\epsilon ; n, r)$ we have 
\[ \covgon(X) \ge (1 - \epsilon) d_1 \cdots d_r. \]
Notably, Theorem~\ref{thm:main_gonality} gives covering gonality bounds for \textit{general} complete intersections and not just very general ones, as opposed to some of the results in \cite{BDELU17} and \cite{BCFS18}. In particular, the covering gonality bounds hold for almost all complete intersections defined over $\Q$.

The first evidence for a bound of the form \eqref{eq:multbound} comes from a theorem of Lazarsfeld from the 1990s, where vector bundle techniques were used to establish sharp multiplicative bounds for complete intersection curves \cite[Exercise 4.12]{Laz97} (this was later revisited in \cite{HLU20}). Unfortunately, these methods do not seem to generalize to higher dimensions. For $n \geq 2$, Stapleton derived superadditive bounds for the covering gonality of very general codimension two complete intersections \cite[Theorem 5.4]{Sta17}. Subsequently, Stapleton and Ullery computed the degree of irrationality for codimension two complete intersections of type $(2, d)$ and $(3, d)$ \cite[Theorem A \& B]{SU20}. By relating the covering gonality of $X$ to conjectures about the degrees and genera of curves on $X$, the first author produced multiplicative bounds for codimension two complete intersections ($r=2$) and complete intersection surfaces ($n=2$), but the constants were far from sharp. Most recently, the authors of \cite{LSU23} establish sharp multiplicative bounds for the degree of irrationality of complete intersections whose degrees are sufficiently spread out (i.e. $d_1\gg d_2\gg\cdots\gg d_r\gg0$).

\subsection*{Separation of many points}

An important ingredient in the proof of Theorem~\ref{thm:main_gonality} which may be of independent interest is an Angehrn--Siu type result (cf.\ \cite{AS95}) about separation of points for adjoint line bundles:

\begin{Lthm}[Multi-point separation by adjoint line bundles]\label{thm:AS_type_separation_of_points}
Let $(X, H)$ be a polarized normal Gorenstein variety. Suppose there exists a nonempty open subset $U \subseteq X^{\reg}$ of the regular locus and a number $\alpha > 0$ such that any curve $C \subseteq X$ meeting $U$ satisfies $\deg_H{C} \ge \alpha$. Then there exists a constant $\delta \coloneqq \delta(X, H)$ such that the linear series $|K_X + d H|$ separates at least $(d - \delta \, \sqrt{d}) \cdot \alpha$ distinct points on $U$. Moreover, if $H$ is very ample, then one can take $\delta = 2\dim{X}$.
\end{Lthm}

This technical strengthening greatly improves the reduction step that appears in \cite{Sta17, Chen24}. Our approach gives different bounds than those arising from the main result of \cite{AS95}, see Remark~\ref{rmk:compareAS}, in that we optimize their techniques for separating asymptotically large numbers of points. Notably, unlike the assumptions in \cite[Corollary~0.4]{AS95}, the intersection-theoretic criterion above only involves curves and not higher-dimensional subvarieties. Theorem~\ref{thm:AS_type_separation_of_points} gives the optimal leading term for $d \gg 0$ (e.g.\ for $X$ a general complete intersection as illustrated by Theorem~\ref{thm:main_gonality}) but does not usually give the optimal error term. For example, a Reider-like argument \cite{Rei88} shows that when $X$ is a surface, the adjoint linear series $|K_X + d H|$ separates at least $\alpha (d - \alpha / H^2) - 1$ points, where $\alpha \coloneqq \min_{C} (C \cdot H)$ is the minimal degree of an effective curve $C \subset X$. This has an $O(1)$ error term as opposed to $O(\sqrt{d})$ as given in Theorem~\ref{thm:AS_type_separation_of_points}.

\subsection*{Auxiliary results on bounds of other invariants}

Theorem~\ref{thm:degree_of_any_curve} can be used in concert with results in \cite{Clemens86, Ein88, Xu94, Voisin96, Pacienza04, CR04} and others to give an absolute lower bound on the \emph{geometric genus} of any subvariety on a general complete intersection. The result for curves is recorded in \S\ref{section:genus_bounds}.
\par 
In \S\ref{section:association}, we apply Theorem~\ref{thm:AS_type_separation_of_points} to bound the \emph{degree of association} between complete intersections, an invariant introduced in \cite{LM23} to measure how similar the birational types of two varieties of the same dimension are. 
We remark in \S\ref{section:generalizations} that the degeneration method used in the proof of Theorem~\ref{thm:degree_of_any_curve} does not rely on the geometry of projective space in any way. Hence we are able to obtain results for complete intersections in any ambient space, possibly formed by intersecting different ample classes.  

\subsection*{Sketch of proofs and outline of the paper}

Both Theorem~\ref{thm:degree_of_any_curve} and Theorem~\ref{thm:main_gonality} follow from a sharp lower bound on the degree of a curve passing through a \textit{general} point on a general complete intersection $X$ of sufficiently large degrees given in Theorem~\ref{thm:main_covering_degree}. Moreover, our technique provides an explicit multiplicative bound for the covering degree and computes
\[ \covdeg(X, \O_{X}(1))\ =\ \deg{X} \] 
(see Definition~\ref{def:covdeg}) in the large degree regime. The proof of Theorem~\ref{thm:main_covering_degree} proceeds by degenerating a complete intersection $X$ into a union of two complete intersections $X_{1} \cup X_{2}$ of smaller degrees meeting transversally along a complete intersection of one dimension less. Fixing a curve $C$ on the generic such $X$, if we could show that the specialization of $C$ has a component in each $X_{i}$, then we could apply induction to conclude. Unfortunately, this naive approach cannot work -- consider the case of lines on a quadric hypersurface degenerating to a union of two hyperplanes. However, when the curve $C$ belongs to a \emph{covering family of curves} (see \S 2 for the precise definition), we show that either the curves break into components covering each $X_{i}$, or some component must cover the intersection $X_1 \cap X_2$. This allows us to run a more subtle induction simultaneously on degrees, dimension, and codimension.

A crucial input in the argument above is a result on the structure of curves on the reducible central fiber of a flat family that deform to nearby fibers. This follows from ideas of Jun Li \cite{Li01}, and we give a self-contained exposition in Appendix~\ref{appendix}. The Grassmannian technique developed by Reidl and Yang \cite{RY22} then allows one to pass between the degree of \textit{any} curve on a general complete intersection and the covering degree of a general complete intersection of the same multidegree but twice the dimension (see \hyperref[proof:thmA]{Proof of Theorem}~\ref{thm:degree_of_any_curve}). Hence, Theorem~\ref{thm:main_gonality} can be used to give similar bounds for the gonality of any curve on a general complete intersection.

In \S 2, we set up the machinery for degenerating stable maps when the ambient space degenerates to the union of two irreducible components meeting transversally and we exhibit 
constraints on which curves can deform to the general fiber. In \S 3, we prove a lower bound on the covering degree of curves on a general complete intersection $X\subseteq \mb{P}^{N}$ with respect to $\O_{X}(1)$, and use this to prove Theorem~\ref{thm:degree_of_any_curve}. In \S 4 we set up the relevant definitions of multiplier ideals, and we collect the necessary lemmas for modifying their log canonical centers. In \S 5, we then use these lemmas to prove Theorem~\ref{thm:AS_type_separation_of_points}; at the end of the section, we also give the proof of Theorem~\ref{thm:main_gonality}. Finally, in \S 6 we present a number of applications and questions that arise from our work.

\subsection*{Conventions}

In this paper, we work over $\mb{C}$. By \emph{variety}, we mean an integral separated scheme of finite type over $\CC$. A \emph{curve} is a $1$-dimensional geometrically reduced and connected separated scheme of finite type over a field (note that a reducible curve is not a variety according to our conventions). A \emph{family of curves} $\pi \colon \cC \rightarrow T$ is a flat proper morphism whose fibers are curves. A \emph{polarized} variety $(X, L)$ consists of a projective variety $X$ together with an ample line bundle $L$. Recall that the \emph{gonality} of a smooth projective irreducible curve $C$, denoted by $\gon(C)$, is the smallest degree of a finite morphism $C\rightarrow \mb{P}^1$. For an integral curve, we define the gonality to be the gonality of its normalization.

For a Cartier divisor $D$, we denote by $\big|D\big|_{\mb{Q}}$ the $\mb{Q}$-linear series, which consists of effective $\mb{Q}$-divisors which are $\mb{Q}$-linearly equivalent to $D$. For additional properties about linear series and line bundles, we mainly follow \cite{Lazarsfeld04a}. By a collection of \emph{general points} $p_{1}, \ldots, p_{m}$ in $X$, we mean that each $p_{i}$ is chosen arbitrarily and independently from some unspecified open subset of $X$. This should be distinguished from a collection of points $p_1, \dots, p_m$ in \textit{general position}, by which we mean $(p_1, \dots, p_m)$ is chosen from an open subset of $\Sym^{m}(X)$. For example, a linear series separating $n$ \emph{points in general position} is an elementary consequence of the dimension of the linear series whereas separating $n$ \emph{general points} is the more restrictive and subtle property of the linear series studied in \S\ref{section:covering_gonality_mult_ideals}-\ref{section:separation_of_points}.
\par
When working with complete intersections $X \subseteq \P^{n + r}$, we will use $n = \dim{X}$ to denote the dimension and $r$ to denote the codimension. Sometimes we will write $N = n + r$ and $X \subseteq \P^N$. We say that $X$ is a complete intersection of \emph{multidegree} $\underline{d} = (d_{1}, \ldots, d_{r})$ if it is pure dimension $n$ and defined by polynomials of degrees $d_1, \dots, d_r$.

\subsection*{Acknowledgments} 

We thank \.{I}zzet Co\c{s}kun, Mohammad Farajzadeh-Tehrani, Fran\c{c}ois Greer, Melissa Liu, Rob Lazarsfeld, and David Stapleton for the many insightful discussions and encouragement. The second author would particularly like to thank his advisor, Ravi Vakil, for invaluable guidance and advice during this project. The third author would like to thank Lawrence Ein for suggesting Angehrn-Siu's approach for obtaining a stronger separation of points statement. Finally, we would like to thank the referees for all of their helpful suggestions and comments. During the preparation of this article, NC was partially supported by an NSF postdoctoral fellowship DMS-2103099 and BC was partially supported by an NSF GRFP fellowship under grant DGE-2146755.

\section{Covering families and degenerations of stable maps}

For a projective variety $X$ equipped with an embedding $X \hookrightarrow \P^N$, we write $\Mbar_g(X, b)$ for the Kontsevich moduli space of stable maps from genus $g$ curves which have degree $b$ with respect to the embedding. Likewise, if $\X \to T$ is a flat projective morphism equipped with a relatively ample line bundle $\L$, we write $\Mbar_g(\X / T, b)$ for the relative moduli space of stable maps of degree $b$ with respect to $\L$. We sometimes omit the degree $b$. For details and conventions regarding stable maps, we refer to \cite{FP97, Kontsevich95}.

\begin{defn} \label{def:deforms}
    Let $\X \to S$ be a morphism of schemes, $s_0 \in S$ a point, and $\mu_{s_0} : C \to \X_{s_0}$ be a stable map. For any point $s\in S$, we say that $\mu_{s_0}$ \textit{deforms to $\X_{s}$} if there exists a family of stable maps
    \begin{center}
        \begin{tikzcd}
        \cC_{T} \arrow[r, "\mu"] \arrow[d] & \X \arrow[d]
        \\
        (T, t_0) \arrow[r] & (S, s_0)
        \end{tikzcd}
    \end{center}
    such that $T$ is connected and $s$ is in the image of $T \to S$ and $(\mu_{s_0})_{\kappa(t_0)} \cong \mu_{t_0}$. 
\end{defn}

\noindent Note that if $s_0$ is in
the closure of the scheme-theoretic point $s$, we can always take $T$ to be the spectrum of a DVR.

\begin{defn}
    A \textit{covering family of curves} on an irreducible projective scheme $X$ consists of a smooth projective family
    \[ \pi \colon \cC \rightarrow T \]
    of curves parametrized by an irreducible quasi-projective variety $T$, together with a dominant morphism (meaning it hits the generic point of each component of $X$)
    \[ f \colon \cC \rightarrow X, \]
    such that for general $t \in T$, the map $f_{t} \colon C_{t} \rightarrow X$ is birational onto its image.
\end{defn}

\begin{defn}\label{def:covdeg}
    Let $(X, L)$ be an irreducible quasi-projective polarized variety. The \textit{covering degree} of $(X, L)$, denoted by $\covdeg(X, L)$ (or $\covdeg_L(X)$ to de-emphasize $L$), is the minimal integer $b$ such that there exists a covering family of curves $\{ \pi \colon \cC \to T, f \colon \cC \rightarrow X \}$ with
    \[ \deg f^{\ast}L|_{\cC_t} = b. \]
\end{defn}

\begin{remark}\label{rem:defcovdeg}
Alternatively, by standard compactification arguments (cf.\ \cite[Prop 2.6]{Vis89}) one can define the \textit{covering degree} $\covdeg_L(X)$ as the minimal $d \in \Z_{\geq 1}$ such that there exists a family of stable curves $\cC \to T$ over an irreducible base scheme $T$ and a \emph{surjective} stable map $f : \cC \to X$ such that $\deg{f^* L|_{\cC_t}} = b$. Since $X$ is irreducible, one can reduce to checking the degrees of stable maps where the general curve $C_{t}$ is smooth and irreducible.
\end{remark}

By a Hilbert scheme argument, it is straightforward to show that covering degree is lower semi-continuous in flat polarized families. In particular, it is a constructible function and does not increase under specialization. It will be useful to set up notation to refer to the covering degree of a complete intersection of fixed degrees:

\begin{defn}
For $n,r \in \Z_{\geq 1}$ and $d_1, \dots, d_r \in \Z_{\geq 1}$ we define
\[ \cd_{n,r}(d_1, \dots, d_r) \coloneqq \max_{(X_{d_1}, \dots, X_{d_r})} \covdeg(X_{d_1} \cap \cdots \cap X_{d_r}, \O(1)), \]
where $X_{d_i} \subseteq \P^{n+r}$ is a hypersurface of degree $d_i$ and the maximum is taken over tuples $(X_{d_1}, \dots, X_{d_r})$  such that the intersection is smooth of dimension $n$. 
\end{defn}

\noindent
By the lower semi-continuity of covering degree, it is immediate that $\cd_{n,r}(d_1, \dots, d_r)$ can be computed as $\covdeg(X_{d_1} \cap \cdots \cap X_{d_r})$ for a general complete intersection $X_{d_1} \cap \cdots \cap X_{d_r} \subseteq \P^{n+r}$.

\subsection{The Breaking Lemma} 

The proof of Theorem \ref{thm:main_covering_degree} in \S\ref{sec:covering degree of complete intersections} proceeds by induction and relies on constraints governing which curves on the central fiber of an SNC degeneration of varieties deform to nearby fibers. These results rely on multiplicity matching conditions along the boundary that are well-known to experts in relative Gromov--Witten theory. 

\begin{defn}\label{definition:snc_degeneration}
Let $R$ be a DVR, and $s,\eta \in \Spec{R}$ be the closed point and generic point, respectively. An \textit{SNC degeneration of varieties} over $R$ is a flat proper family $f : \X \to \Spec{R}$ such that $\X_\eta$ is a smooth variety and $\X_s$ is reduced with simple normal crossing (SNC) singularities.
\end{defn}

It will be convenient to label certain types of components of a stable map.

\begin{defn}\label{defn:types}
Let $ X_1 \cup_Z X_2$ be the union of two smooth irreducible varieties along a smooth divisor $Z$, and $\mu : C \to X_1 \cup_Z X_2$ be a stable map. A sub-curve $C' \subseteq C$ (connected, but not necessarily irreducible) is said to be of
\begin{enumerate}
    \item \emph{ghost type} if $\mu(C')$ is a point in $Z$;
    \item \emph{type $Z$} if it is not of ghost type and $\mu(C')\subseteq Z$; 
    \item \emph{type $X_i$} (for $i=1$ or $2$) if it is neither of ghost type nor of type $Z$, and $\mu(C')\subseteq X_i$.
\end{enumerate}
\end{defn}

To fix notation throughout this section, we will work in the following situation.

\begin{situation} \label{situation:breaking_degeneration}
Let $R$ be a DVR and $s,\eta \in \Spec{R}$ be the closed point and generic point, respectively. Let $f : \X \to \Spec{R}$ be a SNC degeneration of varieties such that $\X_s = X_1 \cup_Z X_2$ is the union of two smooth irreducible varieties along a smooth divisor $Z$. 
\end{situation}

The following lemma is the critical input that allows us to force certain curves to break into reducible curves whose image has components lying in both components of the special fiber.

\begin{lemma} \label{lemma:divisorial_multiplicity_matching}
In Situation~\ref{situation:breaking_degeneration}, suppose there is a non-constant stable map $\mu_s : C \to \X_s$ that deforms to $\X_\eta$. Let $F \subseteq C$ be a connected component of $\mu_s^{-1}(Z)$,  which is contracted to a point $z \in \X^{\reg}$ of the regular locus of the total space. For $i=1,2$, let $C_i$ be the union of all components of $C$ of type $X_i$ meeting $F$.
Then 
%
\[ \sum_{p \in F \cap C_1} m_p(C_1 ; Z) \ =\ \sum_{p \in F \cap C_2} m_p(C_2 ; Z), \]
where $m_p(C_i ; Z)$ is the multiplicity at which $C_i$ intersects $Z$ in $X_i$ at the point $p$, for $i=1,2$.
\end{lemma}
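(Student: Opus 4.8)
The plan is to carry out the local predeformability analysis of J.\ Li~\cite{Li01} in the situation at hand.

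\emph{Step 1: reduction to a semistable family over a DVR.} Since the generic point $\eta$ specializes to $s$, the remark after Definition~\ref{def:deforms} lets us take the deforming family to be over $T=\Spec\cO$ for a DVR $\cO$, with $\mu\colon\cC_\cO\to\X_\cO:=\X\times_{\Spec R}\Spec\cO$ restricting to $\mu_s$ over the closed point and mapping the generic fiber into $\X_\eta$. After a ramified base change $\Spec\cO'\to\Spec\cO$ and blow-ups we may assume in addition that $\cC_{\cO'}$ is regular and $\cC_{\cO'}\to\Spec\cO'$ is semistable with reduced nodal central fiber $\widetilde C$, the map $\mu$ extending after further blow-ups. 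Near $F$ the only effect is that the contracted cluster is enlarged to $\widehat F$, the connected component of $\mu^{-1}(z)$ inside $\widetilde C$, obtained from the strict transform of $F$ by inserting chains of contracted $\bP^1$'s. Since $F$ is a connected component of $\mu_s^{-1}(Z)$ contracted to $z$, no component of type $Z$ meets $F$ (such a component would then lie in the connected component $F$ of $\mu_s^{-1}(Z)$, contradicting that $F$ is contracted), so every component of $\widetilde C$ adjacent to $\widehat F$ is of type $X_1$, of type $X_2$, or ghost. As the strict transforms of the type-$X_i$ components carry the same maps to $\X_s$ as before, the multiplicity sums over $\widehat F\cap C_i$ coincide with those over $F\cap C_i$, so it suffices to prove the identity for $\widehat F$. (If $F$ is a single point of $C$, the base change produces a genuine chain $\widehat F$ and the same argument applies.)

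\emph{Step 2: the fundamental identity.} As $z$ is a regular point of $\X$ where $\X_s=X_1+X_2$ is SNC, pick a formal coordinate system at $z$ with $X_1=V(x)$, $X_2=V(y)$, and $t=xy$ up to a unit, $t$ a uniformizer of $R$. On a neighborhood of $\mu^{-1}(z)\supseteq\widehat F$ we then have $\mu^\ast t=\mu^\ast x\cdot\mu^\ast y$, while $\mu^\ast t=\varpi^{e}\cdot(\text{unit})$ for $\varpi$ a uniformizer of $\cO'$ and $e$ the ramification index of the deformation. Because $\mu^{-1}(X_i)\subseteq\mu^{-1}(\X_s)=\widetilde C$, the divisors of $\mu^\ast x$ and $\mu^\ast y$ are supported on $\widetilde C$, and semistability gives, for every component $\Gamma$ of $\widetilde C$ near $\widehat F$,
\[ v_\Gamma(\mu^\ast x)+v_\Gamma(\mu^\ast y)\ =\ e. \]
On a type-$X_1$ component $v(\mu^\ast y)=0$, hence $v(\mu^\ast x)=e$; symmetrically type-$X_2$ components have $(v(\mu^\ast x),v(\mu^\ast y))=(0,e)$; on a ghost component $\widehat F_j$ both orders lie in $\{1,\dots,e-1\}$, and I set $\ell_j:=v_{\widehat F_j}(\mu^\ast y)$, $a_j:=v_{\widehat F_j}(\mu^\ast x)=e-\ell_j$. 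A one-line local computation at a node $p\in\widehat F_j\cap C_1$ (the local equation of $Z$ in $X_1$ is $y$, and $v_{C_1}(\mu^\ast y)=0$) gives $m_p(C_1;Z)=\ord_p(\mu^\ast y|_{C_1})=\ell_j$, and symmetrically $m_q(C_2;Z)=a_j=e-\ell_j$ at a node $q\in\widehat F_j\cap C_2$.

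\emph{Step 3: the balancing.} For each ghost component $\widehat F_j$, dividing the canonical section of $\mu^\ast\cO_\X(X_1)$ by the $a_j$-th power of a local equation of $\widehat F_j$ and restricting to $\widehat F_j$ yields a regular section $\sigma_j$ of $\big(\mu^\ast\cO_\X(X_1)\otimes\cO_{\cC_{\cO'}}(-a_j\widehat F_j)\big)\big|_{\widehat F_j}$; its first factor is trivial since $\mu$ contracts $\widehat F_j$, and its degree is $-a_j\,\widehat F_j^{\,2}=a_j\delta_j$, where $\delta_j$ is the number of nodes on $\widehat F_j$ — here I use that $\cO_{\cC_{\cO'}}(\widetilde C)\cong\cO$ (a principal divisor), so $\widehat F_j\cdot\widetilde C=0$ and $\widehat F_j^{\,2}=-\delta_j$. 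The divisor of $\sigma_j$ is supported at the nodes of $\widehat F_j$, with order $v_\Gamma(\mu^\ast x)$ at a node towards $\Gamma$; sorting nodes by the type of the opposite branch gives, for each $j$,
\[ e\cdot n_1(\widehat F_j)\ +\ \sum_{j'\sim j}a_{j'}\ =\ a_j\,\delta_j, \]
where $n_1(\widehat F_j)$, $n_2(\widehat F_j)$ count nodes of $\widehat F_j$ towards type-$X_1$, type-$X_2$ components and $j'\sim j$ ranges over ghost--ghost nodes. Substituting $a_j=e-\ell_j$ and $\delta_j=n_1(\widehat F_j)+n_2(\widehat F_j)+\#\{j'\sim j\}$ turns this into $\sum_{j'\sim j}(\ell_j-\ell_{j'})=-n_1(\widehat F_j)\ell_j+n_2(\widehat F_j)(e-\ell_j)$. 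Summing over all ghost components of $\widehat F$, the left-hand sides telescope to $0$ (each ghost--ghost node contributes opposite terms at its two ends), leaving
\[ \sum_{j}n_1(\widehat F_j)\ell_j\ =\ \sum_{j}n_2(\widehat F_j)(e-\ell_j), \]
which, by the multiplicity identifications of Step 2, is exactly $\sum_{p\in\widehat F\cap C_1}m_p(C_1;Z)=\sum_{q\in\widehat F\cap C_2}m_q(C_2;Z)$; with Step 1 this is the lemma.

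\emph{Main obstacle.} The conceptual core — the fundamental identity together with the telescoping of the balancing relations — is short and robust. I expect the real work to be in Step 1: performing semistable reduction compatibly with $\mu$, checking that the inserted chain components are genuinely contracted to $z$ (so that they lie in $\widehat F$ and contribute to neither multiplicity sum), and verifying that passing from $F$ to $\widehat F$ preserves the two sums; one should also note that the coordinates $x,y$ exist only formally (or étale-locally), which is harmless for computing orders of vanishing and the degree of $\sigma_j$.
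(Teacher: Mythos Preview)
Your proof is correct and shares the same core mechanism as the paper's Appendix~\ref{appendix}: construct rescaled sections of $\mu^*\cO_\X(X_i)$ on the contracted components, balance the zeros against the degree of the line bundle on each component, and sum so that the contributions at internal (ghost--ghost) nodes cancel. The packaging, however, is genuinely different. You first pass to a semistable model with regular total space; this lets you compute the degree of the twisted bundle via the self-intersection $\widehat F_j^2=-\delta_j$ and read off node multiplicities from a factorisation $\mu^*y=u^{\ell_j}\cdot(\text{unit})$ in honest local coordinates. The paper instead avoids semistable reduction altogether: Proposition~\ref{prop:log_limit} works on the original, possibly singular, total space, records each $A_{\delta(e)-1}$ node singularity as numerical data, and packages the local matching you derive from regularity into the abstract condition (iii). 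Your route is more hands-on and self-contained for this particular lemma; theirs is formulated to apply uniformly to any SNC central fibre and yields slightly finer information (the isomorphisms $\Phi_e$). Two small points worth tightening: your parenthetical about the case where $F$ is a single point should say explicitly that a further base change (raising $\delta$ above $1$) is what produces the chain $\widehat F$, and your degree identity $\deg\sigma_j=a_j\delta_j$ tacitly assumes $\widehat F_j$ has no self-nodes in $\widetilde C$---harmless, but a sentence acknowledging that generic self-nodes contribute zero on both sides would close the gap.
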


\begin{remark}
    In the case that $F$ is a point, the only claim above is that $F$ is a node lying on the intersection of two curves $C_1, C_2$, where $C_i$ is of type $X_i$ with multiplicity $m_F(C_1 ; Z) = m_F(C_2 ; Z)$.
\end{remark}

The only consequence of Lemma \ref{lemma:divisorial_multiplicity_matching} needed in the proof of Theorem~\ref{thm:main_covering_degree} is that there exists at least one component of type $X_1$ meeting $F$, and at least one component of type $X_2$ meeting $F$. This result follows from Jun Li's relative stable maps formalism \cite{Li01}. We refer to Appendix~\ref{appendix} for a self-contained proof and for a brief comparison with related statements in the literature. A consequence of this result is the following lemma.

\begin{lemma} \label{lemma:breaking}
In situation~\ref{situation:breaking_degeneration}, let $W \subseteq \X_s$ be the singular locus of the total space. Suppose $\mu_s : C \to \X_s$ is a nonconstant stable map that deforms to $\X_\eta$, and $z \in Z \sm W$ is a point in the image of $\mu$. Then one of the following holds:
\begin{enumerate}[\normalfont(a)]
    \item $z$ lies on the image of a component of type $Z$; or
    \item $z$ lies on the image of a component of type $X_1$ and also on the image of a component of type $X_2$.
\end{enumerate}
\end{lemma}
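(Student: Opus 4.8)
The plan is to localize to the connected component of $\mu_s^{-1}(Z)$ lying over $z$ and then feed it into Lemma~\ref{lemma:divisorial_multiplicity_matching}; everything else is combinatorics on the dual graph of $C$. First I would pick a point $p \in C$ with $\mu_s(p) = z$ and let $F \subseteq C$ be the connected component of $\mu_s^{-1}(Z)$ containing $p$. Since $z \notin W$, the point $z$ lies in the regular locus $\X^{\reg}$ of the total space. The two cases to treat are whether or not $F$ contains a component of type $Z$.

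In the first case I would show that alternative (a) holds. If $p$ itself lies on a type-$Z$ component we are immediately done, since that component has image a curve through $\mu_s(p) = z$. Otherwise $p$ lies on a ghost-type component $G \subseteq F$ (it cannot be an isolated point of $\mu_s^{-1}(Z)$, since $F$ contains a component), and $\mu_s(G) = \{z\}$. Walking along a path in the dual graph of $F$ from $G$ to some type-$Z$ component, every component met strictly before the first type-$Z$ component $D$ is of ghost type, and is forced by connectedness to be contracted to $z$; hence the node where the last such ghost component meets $D$ maps to $z$, so $z \in \mu_s(D)$, giving (a).

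In the second case I would show that alternative (b) holds, via Lemma~\ref{lemma:divisorial_multiplicity_matching}. Since no component contained in $F$ is of type $Z$, every component of $C$ inside $F$ maps into $Z$ without being of type $Z$, hence is of ghost type; by connectedness of $F$ these are all contracted to the single point $\mu_s(p) = z$, so $\mu_s(F) = \{z\}$ and $F$ is a connected component of $\mu_s^{-1}(Z)$ contracted to a point of $\X^{\reg}$. Applying Lemma~\ref{lemma:divisorial_multiplicity_matching} — using that $\mu_s$ is nonconstant and deforms to $\X_\eta$ — with $C_i$ the union of all type-$X_i$ components meeting $F$, yields
\[ \sum_{p' \in F \cap C_1} m_{p'}(C_1 ; Z)\ =\ \sum_{p' \in F \cap C_2} m_{p'}(C_2 ; Z). \]
The key observation is that these sums cannot both vanish: if no component of type $X_1$ or $X_2$ met $F$, then since any component of $C$ meeting $F$ but not contained in it has image not inside $Z$ and is therefore of type $X_1$ or $X_2$, connectedness of $C$ would force $F = C$, making $\mu_s$ constant — a contradiction. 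Thus both sums are positive, so a type-$X_1$ component and a type-$X_2$ component each meet $F$; each meets $F$ at a point mapping to $z$, so $z$ lies on the image of both, which is (b).

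The cases are exhaustive, so this finishes the proof. The only substantive input is Lemma~\ref{lemma:divisorial_multiplicity_matching}; the places that need care are verifying that $F$ really is contracted to $z$ in the second case — so that the hypothesis $z \in \X^{\reg}$, supplied by $z \notin W$, can be invoked — and the nonvanishing of the two multiplicity sums, which is precisely where the hypotheses that $\mu_s$ is nonconstant and deforms to $\X_\eta$ enter.
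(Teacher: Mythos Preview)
Your proof is correct and follows essentially the same strategy as the paper's: reduce to Lemma~\ref{lemma:divisorial_multiplicity_matching} by arranging that the relevant connected component $F$ of $\mu_s^{-1}(Z)$ is contracted to $z$, and then read off alternative (b) from the equality of multiplicity sums. The only structural difference is that the paper first selects a non-ghost component $C'$ through $z$ and takes $p\in C'$ (so that one of the two sums in Lemma~\ref{lemma:divisorial_multiplicity_matching} is automatically positive), whereas you pick $p$ arbitrarily and instead invoke connectedness of $C$ to force nonvanishing; your explicit dual-graph walk in Case~1 also spells out a step the paper leaves implicit.
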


\begin{proof}
Since $\mu \colon C \to \X_s$ is non-constant, there must be a component $C' \subseteq C$ meeting $\mu^{-1}(z)$ that is either of type $Z$ or (without loss of generality) of type $X_1$. In the former situation we arrive at case (a), so let us now assume that $C'$ is of type $X_1$. Let $p \in C'$ be a point mapping to $z$. If $p$ lies on a component of type $Z$, then we are in case (a); otherwise $p$ is contained in a connected component $F$ of $\mu_s^{-1}(Z)$ such that $\mu(F)=z$. Since $z \in Z \sm W$, we can apply Lemma~\ref{lemma:divisorial_multiplicity_matching} to conclude that $F$ meets a component $C_1$ of type $X_1$ and a component $C_2$ of type $X_2$. Hence the images of $C_1$ and $C_2$ contain $z$ (since they meet $F$ and $\mu(F) = z$) and satisfy the conditions of case (b).
\end{proof}

\begin{theorem} \label{thm:deform_and_break_degree}
In situation~\ref{situation:breaking_degeneration}, let $\L$ be a line bundle on $\X$, which is relatively ample over $\Spec R$. Suppose that $\X^{\reg} \cap Z$ is nonempty, and that $\covdeg(\X_{\ol{\eta}}, \L_{\ol{\eta}}) \le d$. Then either
\begin{enumerate}[\normalfont(a)]
\item $\covdeg(X_1, \L|_{X_1}) + \covdeg(X_2, \L|_{X_2}) \le d$, or 
\item $\covdeg(Z, \L|_Z) \le d$.
\end{enumerate}
\end{theorem}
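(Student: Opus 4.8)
The plan is to degenerate the entire covering family witnessing $\covdeg(\X_{\ol\eta},\L_{\ol\eta})\le d$ down to the special fibre, invoke the Breaking Lemma (Lemma~\ref{lemma:breaking}) at a general point of $Z$, and then extract new covering families from the resulting components while tracking degrees carefully. First, after a finite extension of $R$ — which preserves Situation~\ref{situation:breaking_degeneration} and which I suppress, as it is harmless in the cases of interest (all residue fields being $\bC$) — I would arrange that $\covdeg(\X_\eta,\L_\eta)\le d$ is witnessed by an honest covering family of some degree $b\le d$ over an irreducible base $T_\eta$, regarded via Remark~\ref{rem:defcovdeg} as a surjective stable map $\cC_\eta\to\X_\eta$. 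Such a family is the same as a morphism $T_\eta\to\M$, where $\M:=\Mbar_g(\X/\Spec R,b)$ is the relative Kontsevich space (for the appropriate genus $g$); let $V\subseteq\M$ be the closure of its image. Since $\M\to\Spec R$ is proper and $V$ dominates $\eta$, the structure morphism $V\to\Spec R$ is surjective, so $V_s\ne\varnothing$; pulling back the universal stable map produces a family $\mu\colon\mathcal{U}_V\to\X$ over $V$ restricting over $T_\eta$ to (a model of) the original covering family. Because $\X$ is irreducible (being flat over $\Spec R$ with integral generic fibre) and $\mu$ is a proper morphism of $\Spec R$-schemes whose image contains $\X_\eta$, it is surjective; restricting to the fibre over $s$ shows that $\mu_{V_s}\colon\mathcal{U}_{V_s}\to\X_s=X_1\cup_Z X_2$ is surjective. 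Finally, since $V$ is irreducible with generic point lying over $\eta$, every stable map $\mu_v$ with $v\in V_s$ is nonconstant and deforms to $\X_\eta$ in the sense of Definition~\ref{def:deforms} — restrict the universal family along the normalization of a curve in $V$ through $v$ dominating $\Spec R$.

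I may assume $Z$ is irreducible (as it is in the intended applications; otherwise one argues componentwise). Write $W=\Sing\X$; by hypothesis $\X^{\reg}\cap Z\ne\varnothing$, so $Z\sm W$ is dense in $Z$. Each $z\in Z\sm W$ lies on $\mu_v(C_v)$ for some $v\in V_s$ by surjectivity, so Lemma~\ref{lemma:breaking} applies to that $\mu_v$: either $z$ lies on the image of a component of type $Z$, or $z$ lies on the image of a component of type $X_1$ and also on the image of a component of type $X_2$. Let $Z^{(Z)}$, resp.\ $Z^{(12)}$, be the locus of $z\in Z\sm W$ where the first, resp.\ second, alternative holds for some $v$; both are constructible, being images of constructible incidence loci inside $\mathcal{U}_{V_s}$, and $Z^{(Z)}\cup Z^{(12)}=Z\sm W$. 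Since $Z$ is irreducible, either $\ol{Z^{(Z)}}=Z$ or $\ol{Z^{(12)}}=Z$; these two cases will produce alternatives (b) and (a) respectively.

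If $\ol{Z^{(Z)}}=Z$: choose an irreducible component of the corresponding incidence locus dominating $Z$ and restrict to the dense open over which the combinatorial type of $\mu_v$ is constant; the distinguished type-$Z$ component then sweeps out a family of irreducible curves contained in $Z$, passing through a dense subset of $Z$, each of $\L$-degree $\le b$. After normalizing fibres, Remark~\ref{rem:defcovdeg} yields $\covdeg(Z,\L|_Z)\le b\le d$, which is (b). If $\ol{Z^{(12)}}=Z$: choose an irreducible component $\mathcal{P}$ dominating $Z$ of the incidence locus of triples $(v,p,q)$ with $\mu_v(p)=\mu_v(q)\in Z\sm W$, with $p$ on a component of type $X_1$ and $q$ on a component of type $X_2$ of $C_v$, and restrict to the dense open over which the combinatorial type of $C_v$ is constant. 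Over this base I obtain two families of irreducible curves $\{D_1(\rho)\}$ in $X_1$ and $\{D_2(\rho)\}$ in $X_2$, where $D_i(\rho)$ meets $Z$ at $\mu_v(p)$ but is not contained in $Z$; since those points are dense in $Z$ and the $D_i(\rho)$ are not contained in $Z$, the closures of $\bigcup_\rho D_1(\rho)$ and $\bigcup_\rho D_2(\rho)$ have dimension exceeding $\dim Z$, hence equal $X_1$ and $X_2$. Normalizing fibres gives covering families of $X_1$ and $X_2$ of generic degrees $\beta_1,\beta_2$, and because $D_1(\rho)$ and $D_2(\rho)$ are distinct irreducible components of the same stable curve $C_v$ for every $\rho$, we get $\beta_1+\beta_2\le b\le d$. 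Hence $\covdeg(X_1,\L|_{X_1})+\covdeg(X_2,\L|_{X_2})\le\beta_1+\beta_2\le d$, which is (a).

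The genuine difficulty is entirely packaged into Lemma~\ref{lemma:divisorial_multiplicity_matching}/Lemma~\ref{lemma:breaking}, proved in Appendix~\ref{appendix}: it is exactly the \emph{simultaneous} appearance of a type-$X_1$ and a type-$X_2$ component over a single point of $Z$ that makes the degree split $\beta_1+\beta_2\le b$ available, and this is what upgrades the naive bound $2d$ to the sharp bound $d$. Granting that lemma, the remaining points needing care are the spreading-out and properness argument of the first paragraph — in particular verifying that the degenerated family remains surjective onto $\X_s$, so that general points of $Z$ are genuinely hit — and the otherwise routine extraction of honest covering families: flattening the incidence loci, passing to dominant irreducible components, normalizing fibres, and the dimension count that a family of curves not contained in $Z$ but meeting a dense subset of $Z$ must dominate the ambient component $X_i$ (or $Z$).
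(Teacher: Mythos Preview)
Your argument is correct and follows essentially the same approach as the paper: spread out the covering family via the relative Kontsevich space, use properness and irreducibility to get surjectivity onto $\X_s$, then apply Lemma~\ref{lemma:breaking} over $Z$ to extract the two alternatives. The only organizational differences are that the paper (i) handles the stack/scheme issue for $\Mbar_g(\X/R)$ by passing to a finite integral cover via \cite[Prop.~2.6]{Vis89}, and (ii) applies Lemma~\ref{lemma:breaking} once at the generic point $\eta_Z$ of $Z$ (after selecting a suitable irreducible component $S'$ of the parameter space over $s$), which is slightly slicker than your constructibility dichotomy over closed points but yields the same conclusion by the same ``irreducible, contains $Z$, not contained in $Z$, hence equals $X_i$'' step.
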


\begin{proof}
The idea is that through a general point $z \in Z$, there is a curve in the specialization of the covering family passing through $z$. Then we apply Lemma~\ref{lemma:breaking} to conclude that either $z$ lies on a component of type $Z$ or it lies on two components, one of type $X_1$ and one of type $X_2$. Since $z$ is a general point, in the former case, the specialization of the covering family contains a covering family of curves of $Z$ so we conclude (b), in the latter case, there is a component covering $X_1$ and a component covering $X_2$ so we conclude (a). We now fill in the details of this argument.

By assumption, there is a family of stable curves $\pi : \cC \to T$ over an integral base $T$ and a stable dominant morphism $\mu : \cC \to \X_{\ol{\eta}}$ such that $\deg \mu^* \L |_{\cC_{t}} \le d$ fiberwise. By universality of the moduli stack, this gives a morphism $T \to \Mbar_{g}(\X_{\eta})$. Let $\cW \subseteq \Mbar_{g}(\X / R)$ be the stack theoretic closure of the image of $T$. By \cite[Proposition 2.6]{Vis89}, one can choose a finite cover $W \to \cW$ by an integral scheme. 
Since $W$ is integral and dominates $\Spec{R}$, it is flat over $\Spec{R}$. Therefore, pulling back to $W$ we have the following diagram
\begin{center}
\begin{tikzcd}
\cC \arrow[r, "\mu"] \arrow[d, "\pi"'] & \X \arrow[d]
\\
W \arrow[r] & \Spec{R},
\end{tikzcd}
\end{center}
where $\mu$ is surjective because it is surjective over $\eta$, the family $\cC$ is proper over $R$, and $\X$ is irreducible. Specializing to the geometric special fiber, we get a stable map
\begin{center}
\begin{tikzcd}
\cC_s \arrow[r, "\mu_s"] \arrow[d, "\pi_s"'] & \X_s
\\
W_s
\end{tikzcd}
\end{center}
with $\mu_s : \cC_s \to \X_s$ surjective and $\deg \mu^* \L |_{\cC_s} \le d$ by flatness. Since $X_1$ and $X_2$ are both irreducible, there exists an irreducible component 
$S \subseteq W_s$ such that
$Z \subseteq X_i \subseteq \mu(\mathcal{C}_S)$
for some $i \in \{1,2\}$.

Let $\eta_Z \in Z$ be the generic point. Since $\mu$ is surjective, there is a point $\delta \in \cC_S$ mapping to $\eta_Z$. Let $S' \subseteq S$ be the image under $\pi$ of an irreducible component of $\mu^{-1}_s(Z)$ containing $\delta$. By construction, every fiber of $\cC_{S'}\rightarrow S'$ has image meeting $Z$ and $\mu_s(\cC_{S'})$ contains $Z$. Let $\eta_{S'} \in S'$ be the generic point. By the flatness of $\pi : \cC_{S'} \to S'$, the irreducible components of $\cC_{S'}$ are identified with those of $\cC_{\eta_{S'}}$.  Since $\mu : \cC_{\eta_{S'}} \to \X_s$ is a stable map that deforms to $\X_{\eta}$ (there is a specialization from the generic point of $W$ to $\eta_{S'}$ since $W$ is irreducible), we may apply Lemma~\ref{lemma:breaking} to the stable map $\mu_s : \cC_{\eta_{S'}} \to \X_s$, where we take $z = \eta_Z$. Then either there is a component $\cC_Z \subseteq \cC_{\eta_{S'}}$ with $\cC_Z$ of type $Z$ and hitting $\eta_Z$, in which case $\cC_Z \to S'$ is a covering family of $Z$ so we are in case (b), or there are two irreducible components $\cC_i \subseteq \cC_{S'}$ such that $(\cC_i)_{\eta_{S'}}$ is of type $X_i$ for $i=1,2$, and $\eta_{Z}$ lies on both of their images. For the latter, since $\mu(\cC_i)$ and $X_i$ are all irreducible and $\mu(\cC_i)$ properly contains $Z$, we must have that $\mu(\cC_i) = X_i$. Therefore, $\cC_{S'} \to S'$ is a covering family of $X_1 \cup_Z X_2$ over an irreducible base $S'$, and we have that
\begin{align*}
d \ \ge\ \deg_{T} \mu^{\ast}\L \ =\ \deg_{S'} \mu_{\ast}\L |_{\cC_{S'}} \ & \ge\ \deg_{S'} \mu^{\ast}\L |_{\cC_1} + \deg_{S'} \mu^{\ast}\L |_{\cC_2} \ \\
& \ge\ \covdeg(X_1, \L|_{X_1}) + \covdeg(X_2, \L|_{X_2}),
\end{align*}
where the subscript indicates that the degree is taken over $S'$ (as in the line bundles live on curves over a base scheme). This yields case (a).
\end{proof}

\section{Covering degree of complete intersections}\label{sec:covering degree of complete intersections}

In this section, we prove the following lower bound on the covering degree of complete intersections and show how it implies the bounds of Theorem~\ref{thm:degree_of_any_curve} on the degree of \emph{any} curve.

\begin{theorem} \label{thm:main_covering_degree}
Let $X \subseteq \P^{n+r}$ be a general complete intersection variety of dimension $n$ cut out by polynomials of degrees $d_{1}, \ldots, d_{r} \ge n$. Then
\[ \covdeg(X, \O(1)) \ge (d_1 - n + 1) \cdots (d_r - n + 1). \]
Moreover, there exists $N \coloneqq N(n,r)$ such that if $d_1, \dots, d_r \ge N$ then
\[ \covdeg(X, \O(1)) = d_1 \cdots d_r. \]
\end{theorem}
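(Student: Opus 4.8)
The plan is to prove both inequalities by induction on the triple $(n, r, d_1 + \cdots + d_r)$, always working with the invariant $\cd_{n,r}(d_1,\dots,d_r) = \covdeg(X,\O(1))$ for a general complete intersection $X$, and degenerating one of the defining hypersurfaces of $X$ into a transverse union, so that Theorem~\ref{thm:deform_and_break_degree} applies. The base cases are $r = 0$, where $X = \P^n$ has $\covdeg = 1$ via the family of lines, and $n = 1$, where $X$ is a curve and any covering family is generically birational onto $X$, so $\covdeg(X,\O(1)) = \deg X = d_1 \cdots d_r$. For the lower bound $\covdeg(X) \ge \prod_i(d_i - n + 1)$ I would degenerate the degree-$d_1$ hypersurface into a general degree-$(d_1-1)$ hypersurface union a general hyperplane, e.g.\ via the pencil $V(F'\ell + tG,\,G_2,\dots,G_r) \subseteq \P^{n+r}_{\Spec R}$ with $F',\ell,G,G_2,\dots,G_r$ general of the appropriate degrees. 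The generic fiber is a general complete intersection of type $\underline d$, and the special fiber is $X_1 \cup_Z X_2$, with $X_1$ a general complete intersection of type $(d_1-1,d_2,\dots,d_r)$ and dimension $n$, $X_2$ a general complete intersection of type $(d_2,\dots,d_r)$ and dimension $n$ inside the hyperplane $\P^{n+r-1}$, and $Z$ a general complete intersection of type $(d_1-1,d_2,\dots,d_r)$ and dimension $n-1$. Applying Theorem~\ref{thm:deform_and_break_degree} with $\L = \O(1)$ and then the inductive hypothesis: in the first alternative $\covdeg(X_1) + \covdeg(X_2) \ge (d_1-n)\prod_{i\ge2}(d_i-n+1) + \prod_{i\ge2}(d_i-n+1) = \prod_i(d_i-n+1)$, and in the second $\covdeg(Z) \ge (d_1-n+1)\prod_{i\ge2}(d_i-(n-1)+1) \ge \prod_i(d_i-n+1)$ since $\dim Z = n-1$; either way we win. (When $d_1 = n$ one only uses the trivial bound $\covdeg(X_1) \ge 0$, which still suffices because the target factor $d_1 - n + 1$ equals $1$.)

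\textbf{Geometric input and the upper bound.} I would verify that for general choices the pencil above is an SNC degeneration whose total space is regular at the generic point of $Z$ (the local equation is $uv + tG = 0$ with $G$ nonvanishing there) and that $X_1, X_2, Z$ really are \emph{general} complete intersections of their types, so that their covering degrees are computed by $\cd$ and the inductive hypotheses apply — this is a Bertini-type argument, but one must arrange several genericity conditions to hold simultaneously along the pencil. For the upper bound, a general linear subspace $\P^{r+1} \subseteq \P^{n+r}$ through a point of $X$ meets $X$ in a curve of degree $d_1 \cdots d_r$, and as the subspace varies these curves cover $X$ and are generically smooth and irreducible; hence $\covdeg(X,\O(1)) \le d_1 \cdots d_r$ with no hypothesis on the $d_i$.

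\textbf{The equality for $d_i \ge N$.} Here the decisive modification is to use a \emph{balanced} degeneration: split $d_1 = a + b$ with $a, b$ of size about $d_1/2$ (in particular $a, b \ge 2$), so that $X$ degenerates into $X_1 \cup_Z X_2$ with $X_1, X_2$ general complete intersections of dimension $n$ and types $(a,d_2,\dots,d_r)$, $(b,d_2,\dots,d_r)$, and $Z$ a general complete intersection of dimension $n-1$ and type $(a,b,d_2,\dots,d_r)$. Theorem~\ref{thm:deform_and_break_degree} gives $\covdeg(X) \ge \min\{\covdeg(X_1) + \covdeg(X_2),\,\covdeg(Z)\}$. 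In the ``$Z$'' alternative, $\covdeg(Z)$ is bounded below either by the lower bound already proven — for $d_i$ large this exceeds $(a+b)\prod_{i\ge2}d_i$ because $(a-n+2)(b-n+2)$ dwarfs $a+b$ — or, more cleanly, by the inductive equality in dimension $n-1$, which gives $\covdeg(Z) = ab\prod_{i\ge2}d_i$; in either form $\covdeg(Z) \ge (a+b)\prod_{i\ge2}d_i = \prod_i d_i$, using the elementary inequality $ab \ge a+b$ for $a,b \ge 2$. In the ``$X_1,X_2$'' alternative, $\covdeg(X_1) + \covdeg(X_2) \ge a\prod_{i\ge2}d_i + b\prod_{i\ge2}d_i = \prod_i d_i$ by the inductive equality in dimension $n$ (the sum $d_1 + \cdots + d_r$ has strictly decreased), provided $a,b$ are themselves $\ge N(n,r)$. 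The constant $N(n,r)$ is built up recursively — large compared to $n$, $r$, $N(n-1,r+1)$ and $N(n,r-1)$ — so that the pieces produced always land in the range where the inductive hypotheses apply; the dimension-$2$ case, where Noether--Lefschetz already forces $\covdeg = \deg$ once the degrees are moderately large, anchors the induction on $n$.

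\textbf{Main obstacle.} The genuinely delicate point is not any single geometric ingredient but the bookkeeping of the induction for the equality. The balanced degeneration only lets one decrease $d_1 + \cdots + d_r$ when $d_1 \ge 2N(n,r)$, so configurations whose degrees sit near the threshold have to be handled separately: one reduces them in dimension via the ``$Z$'' alternative and ultimately bottoms out at the Noether--Lefschetz input in dimension $2$, and the thresholds $N(n-1,r+1)$, $N(n,r-1)$, $N(n,r)$ must be fixed in a compatible order so that the reductions never fall out of range. Getting this threading right, together with the simultaneous-genericity verification in the degeneration, is where I expect the real work to lie; the arithmetic identities $(d_1 - n) + 1 = d_1 - n + 1$ and $ab \ge a+b$ are what make the two alternatives of Theorem~\ref{thm:deform_and_break_degree} close, but only after the inductive scaffolding is in place.
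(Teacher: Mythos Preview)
Your argument for the explicit lower bound $\prod_i(d_i-n+1)$ is correct and is essentially the paper's proof (Theorem~\ref{thm:explicit_covering_degree}): the same $(d_1-1)+1$ splitting, the same base cases $n=1$ and $r=0$, the same arithmetic closing the two alternatives of Theorem~\ref{thm:deform_and_break_degree}.

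For the equality $\covdeg(X)=\prod_i d_i$, however, your induction has a genuine gap --- precisely the one you flag as the ``main obstacle'' and do not close. When the $d_i$ sit near the threshold, you cannot split $d_1=a+b$ with both $a,b\ge N(n,r)$, so in alternative~(a) of Theorem~\ref{thm:deform_and_break_degree} the same-dimension inductive hypothesis is unavailable. Your proposed fix, ``reduce in dimension via the $Z$ alternative,'' does not work: Theorem~\ref{thm:deform_and_break_degree} only guarantees that \emph{one} of the two alternatives bounds $\covdeg(X)$, and you do not get to choose which. If alternative~(a) is the one that holds, you remain in dimension $n$ with no base case for the induction on $\sum d_i$. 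Noether--Lefschetz at $n=2$ only helps if you can actually reach $n=2$, but alternative~(a) never decreases $n$; and the explicit lower bound from the first part is strictly below $\prod d_i$, so summing it over any superadditive decomposition still falls short of equality.

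The paper fills this gap with a completely different ingredient: Paulsen's divisibility theorem (Theorem~\ref{thm:Paulsen}). One first uses the explicit lower bound to show that, once all entries are $\ge k$, the $Z$ alternative automatically exceeds $\prod d_i$, hence $\cd_{n,r}$ is unconditionally multi-superadditive in that range. The new input is then a set $S$ of integers $d$ for which Paulsen proves $d\mid\deg C$ for every curve on a very general degree-$d$ hypersurface. Writing each $d_j=\sum_i a^i_j$ with all $a^i_j\in S$, all $a^i_j\ge k$, and $a^i_j$ coprime to $a^{i'}_{j'}$ whenever $j\neq j'$, one gets $\cd_{n,r}(a^{i_1}_1,\dots,a^{i_r}_r)=a^{i_1}_1\cdots a^{i_r}_r$ \emph{exactly} for each tuple, and multi-superadditivity sums these to $\prod_j d_j$. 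The point is that Paulsen supplies, \emph{in every dimension $n$}, an infinite supply of atomic multidegrees where equality is known outright; your Noether--Lefschetz anchor lives only at $n=2$, and the degeneration offers no mechanism to transport that information upward through alternative~(a).
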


\begin{remark}
When $X \subseteq \P^{n+1}$ is a hypersurface of degree $d = n$, then $X$ is covered by lines so $\covdeg(X) = 1$. However, a basic dimension count shows that when $d > n$, the general such hypersurface is not covered by lines and hence $\covdeg(X) \ge 2$. This aligns with the constants in Theorem~\ref{thm:main_covering_degree}. Moreover, if $d \leq 2n-1$ then every hypersurface contains a line but if $d \geq 2n$ then the general such hypersurface does not, aligning with the constants in Theorem~\ref{thm:degree_of_any_curve}. 
\end{remark}

\subsection{The Main Induction}

To prove this result, we will degenerate our complete intersection to the union of complete intersections of lower degrees and track how stable maps deform using the results of the previous section. This leads to the recursion below, which enables a simultaneous induction on the dimension and codimension of $X$.

\begin{thm} \label{thm:inductive_bound}
Let $d_1 = a + b$ for positive integers $a, b$. Then for any $d_2, \dots, d_r \in \Z_{\geq 1}$ we have
\begin{align*}
\cd_{n,r}(d_1, \dots, d_r) &\ge \min \{ \cd_{n,r}(a, d_2, \dots, d_r) + \cd_{n,r}(b, d_2, \dots, d_r), \\
& \hspace{1.1cm} \cd_{n-1,r+1}(a,b,d_2, \dots, d_r) \}.
\end{align*}
\end{thm}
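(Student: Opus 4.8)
The plan is to degenerate a general complete intersection of type $(d_1, \dots, d_r) = (a+b, d_2, \dots, d_r)$ to a reducible variety and apply Theorem~\ref{thm:deform_and_break_degree}. Concretely, write $X = X_{d_1} \cap X_{d_2} \cap \cdots \cap X_{d_r}$ and degenerate only the first hypersurface $X_{d_1}$ of degree $d_1 = a+b$ into a union $Y_a \cup Y_b$ of hypersurfaces of degrees $a$ and $b$ meeting transversally. More precisely, I would choose general hypersurfaces $Y_a, Y_b$ of degrees $a, b$ and general $F_2, \dots, F_r$ of degrees $d_2, \dots, d_r$, and let $\mathfrak{X} \to \Spec{R}$ (with $R = \C[t]_{(t)}$ or a DVR after base change) be the family whose fiber is cut out by $\{t \cdot G + Y_a Y_b = 0, F_2 = 0, \dots, F_r = 0\}$ inside $\P^{n+r}$, where $G$ is a general hypersurface of degree $d_1$. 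For general choices, the generic fiber $\mathfrak{X}_\eta$ is a smooth complete intersection of type $(d_1, \dots, d_r)$, hence $\covdeg(\mathfrak{X}_{\ol\eta}, \O(1)) \le \cd_{n,r}(d_1, \dots, d_r)$ by definition and invariance under the base change to $\ol\eta$. The special fiber decomposes as $\mathfrak{X}_s = X_1 \cup_Z X_2$ where $X_i := Y_{(\cdot)} \cap F_2 \cap \cdots \cap F_r$ (of type $(a, d_2, \dots, d_r)$ and $(b, d_2, \dots, d_r)$ respectively) and $Z = Y_a \cap Y_b \cap F_2 \cap \cdots \cap F_r$ is a complete intersection of type $(a, b, d_2, \dots, d_r)$ in $\P^{n+r}$, which has dimension $n-1$ and codimension $r+1$.

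The key verifications are the hypotheses of Theorem~\ref{thm:deform_and_break_degree}: that $\mathfrak{X}_s$ is reduced with SNC singularities and is the transverse union of two smooth irreducible varieties along the smooth divisor $Z$, that $\O(1)$ is relatively ample, and that $\mathfrak{X}^{\reg} \cap Z \ne \emptyset$. For general choices of the hypersurfaces these are all standard Bertini-type statements; the transversality of $Y_a, Y_b, F_2, \dots, F_r$ guarantees $X_1, X_2, Z$ are smooth of the expected dimensions, and one checks that the total space $\mathfrak{X}$ is smooth away from a proper closed subset of $Z$ (the singular locus $W$ of the total space is cut out by the additional vanishing of $G$ on $Z$, hence is a proper subvariety), so $\mathfrak{X}^{\reg} \cap Z$ is a dense open of $Z$ and in particular nonempty. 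Relative ampleness of $\O(1)$ is immediate. Applying Theorem~\ref{thm:deform_and_break_degree} with $d = \cd_{n,r}(d_1, \dots, d_r)$ and $\L = \O(1)$, we conclude that either $\covdeg(X_1, \O(1)) + \covdeg(X_2, \O(1)) \le d$ or $\covdeg(Z, \O(1)) \le d$. Since $X_1, X_2, Z$ are (possibly special, but smooth) complete intersections of the stated types, the definition of $\cd$ as a maximum over such configurations gives $\covdeg(X_1, \O(1)) \ge \cd_{n,r}(a, d_2, \dots, d_r)$... wait, this inequality goes the wrong way. Instead I should choose the $Y_a, Y_b, F_i$ so that $X_1, X_2, Z$ are the ones computing (or approaching) the maxima — but since covering degree is lower semicontinuous and $\cd$ is attained on a general member, I would instead argue: pick the degeneration with $Y_a, Y_b, F_i$ generic; then $X_1, X_2, Z$ are general complete intersections of their types, so $\covdeg(X_i, \O(1)) = \cd_{n,r}$ of the appropriate multidegree and $\covdeg(Z, \O(1)) = \cd_{n-1,r+1}(a, b, d_2, \dots, d_r)$ by the remark following the definition of $\cd_{n,r}$ (that $\cd$ is computed by a general member). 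Substituting these into the conclusion of Theorem~\ref{thm:deform_and_break_degree} yields exactly $\cd_{n,r}(d_1, \dots, d_r) = d \ge \min\{\cd_{n,r}(a, d_2, \dots, d_r) + \cd_{n,r}(b, d_2, \dots, d_r),\ \cd_{n-1,r+1}(a, b, d_2, \dots, d_r)\}$, as desired.

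The main obstacle I anticipate is ensuring that the generic choice of the degeneration makes $X_1$, $X_2$, and $Z$ \emph{general} in their respective families — i.e., that the family of special fibers $X_1 \cup_Z X_2$ obtained this way sweeps out a dense set of configurations, so that lower semicontinuity of covering degree lets us replace $\covdeg$ on these particular fibers by the generic value $\cd$. This is a point where one must be slightly careful: $X_1$ and $Z$ are not independent (they share the hypersurfaces $F_2, \dots, F_r$, and $Z \subseteq X_1$), so one cannot simultaneously choose all three to be independently generic. The fix is that $\cd_{n,r}(a, d_2, \dots, d_r)$ is realized on an open dense locus of choices of $(Y_a, F_2, \dots, F_r)$, and likewise for $X_2$ and for $Z$; intersecting these (nonempty, open) loci in the parameter space of tuples $(Y_a, Y_b, F_2, \dots, F_r)$ shows a generic such tuple makes all three of $X_1, X_2, Z$ simultaneously compute their respective $\cd$ values. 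A secondary routine point is the passage to the algebraic closure $\ol\eta$ and confirming $\covdeg$ is insensitive to this base change, which follows from the compactified (stable-maps) description of covering degree in Remark~\ref{rem:defcovdeg} together with the fact that surjectivity of a proper morphism is preserved under field extension.
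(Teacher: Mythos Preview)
Your proposal is correct and follows essentially the same approach as the paper: construct the pencil $V(Y_aY_b + tG)\cap F_2\cap\cdots\cap F_r$, verify the SNC and regular-locus hypotheses (the paper identifies the singular locus $W$ of the total space as $V(G)\cap V(Y_a)\cap V(Y_b)\cap F_2\cap\cdots\cap F_r$, exactly as you do), and invoke Theorem~\ref{thm:deform_and_break_degree}. One small imprecision: your inequality $\covdeg(\X_{\ol\eta})\le \cd_{n,r}(d_1,\dots,d_r)$ does not follow literally ``by definition'' since $\cd_{n,r}$ is a maximum over complete intersections defined over $\CC$ while $\X_{\ol\eta}$ lives over $\ol{\CC(t)}$; it follows instead from the constructibility/semicontinuity you already invoke later (the map from the parameter space of $(Y_a,Y_b,G,F_2,\dots,F_r,t)$ to the moduli of complete intersections is dominant, so the geometric generic fiber has covering degree equal to $\cd_{n,r}$), and indeed only the inequality $\le$ is needed for the argument.
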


\begin{proof}
Given a very general hypersurface $X_{d_1}$ in $\mb{P}^{n+r}$, there exists a degeneration to a union $X_a \cup X_b$ of two very general hypersurfaces. More explicitly, we can find equations $F,G,H$ of degrees $d_1, a, b$ such that $V(G)$ and $V(H)$ are isomorphic to the geometric generic hypersurface of degrees $a$ and $b$, respectively, and $V(F), V(G)$, and $V(H)$ all meet transversally. Choosing other fixed hypersurfaces $X_{d_2}, \dots, X_{d_r}$ meeting each other and the aforementioned hypersurfaces all transversally, the family
\[ \X \ \coloneqq\  V(GH - t F) \cap X_{d_2} \cap \cdots \cap X_{d_r} \  \subseteq \ \P^{n+r} \times \A^1 \]
satisfies the hypotheses in Theorem~\ref{thm:deform_and_break_degree}, and the geometric generic fiber of $\X$ is isomorphic to the geometric generic complete intersection of multidegree $\underline{d}$ (cf. \cite[Lemma 2.1]{Vial13}). Let $Z$ denote the intersection of the two components of the special fiber:
\[ Z \colonequals V(G) \cap V(H) \cap X_{d_2} \cap \cdots \cap X_{d_r}. \]
Note that the singular locus $W \subseteq \X$ is the intersection
\[ W \colonequals V(F) \cap V(G) \cap V(H) \cap X_{d_2} \cap \cdots \cap X_{d_r}, \]
which is a divisor in $|\O_{Z}(d)|$; in particular, $W$ does not contain $Z$ as is required in Theorem~\ref{thm:deform_and_break_degree}.
Let $R = \CC[t]_{(t)}$ and $K = \CC(t)$. By the previous discussion,
\begin{enumerate}
\item $\cd_{n,r}(d_1, \dots, d_r) = \covdeg({\X}_{\bar{\eta}}, \O(1))$;
\item $\cd_{n,r}(a, d_2, \dots, d_r) = \covdeg(X_1, \O(1))$;
\item $\cd_{n,r}(b, d_2, \dots, d_r) = \covdeg(X_2, \O(1))$;
\item $\cd_{n-1, r+1}(a,b, d_2, \dots, d_r) = \covdeg(Z, \O(1))$.
\end{enumerate}
Therefore, we may apply Theorem~\ref{thm:deform_and_break_degree} to conclude.
\end{proof}

The following will serve as the base case of an induction argument.

\begin{example}\label{Example:fanoindex1}
Let $X \colonequals X_{\underline{d}} \subseteq \P^{n+r}$ be a general Fano index 1 complete intersection, i.e. $\sum d_{i} = n+r$. It is well known that the Fano variety of lines on $X$ has dimension equal to the expected dimension $2(n+r)-\sum d_{i} - r - 2 = n-2$, so the locus of lines sweeps out at most a divisor on $X$. This implies that the covering degree of $X$ is at least 2, and it turns out that conics do indeed cover $X$. 
\end{example}

We are now ready to prove the first statement in Theorem~\ref{thm:main_covering_degree}.

\begin{theorem} \label{thm:explicit_covering_degree}
For any fixed $n,r$, and any $\underline{d}$ with $d_{i} \geq n$ for all $i$, we have
\begin{equation} \label{eq:degree_inequality}
\cd_{n,r}(d_1, \dots, d_r)\  \geq\  \prod_{i=1}^{r} (d_{i} - n+1).
\end{equation}
\end{theorem}

\begin{proof} 
We apply induction on both $n$ and $r$ simultaneously. The case $n = 1$ of curves is clear. The case $r = 0$ is exactly the fact that $\covdeg(\P^n) = 1$ since in this case the product in equation~(\ref{eq:degree_inequality}) is empty. Towards induction, assume the statement holds for $(n, r-1)$ and $(n-1, r)$; we need to prove it holds for $(n,r)$. Splitting $d_1 = (d_1 - 1) + 1$, we obtain using Theorem~\ref{thm:inductive_bound} that
\begin{align*}
     &\cd_{n,r}(d_1, \dots, d_r) \\
     &\ge \min \big\{ \cd_{n,r}(d_{1}-1, d_2, \dots, d_r) + \cd_{n,r}(1, d_2, \dots, d_r), \cd_{n-1,r+1}(d_{1}-1,1,d_2, \dots, d_r) \big\}
     \\
     &= \min \big\{ \cd_{n,r}(d_{1}-1, d_2, \dots, d_r) + \cd_{n,r-1}(d_2, \dots, d_r), \cd_{n-1,r}(d_{1}-1,d_2, \dots, d_r) \big\}.
\end{align*}

Now we may apply induction on the degrees to the first term in the expression above. For the second term, we use the inductive hypothesis $(n-1,r)$ to conclude
\begin{align*}
\cd_{n-1,r}(d_{1}-1,d_2, \dots, d_r) &\geq ((d_{1}-1) - (n-1)+1) \cdot \prod_{i=2}^{r} (d_{i} - (n-1) + 1) \\
&\ge \prod_{i=1}^{r} (d_{i} - n+1).
\end{align*}
We use the following base case $d_1 = n$ for induction on $d_1$:
\begin{align*}
\cd_{n,r}(n, d_2, \dots, d_r) \ge \cd_{n,r}(1, d_2, \dots, d_r) &= \cd_{n, r-1}(d_2, \dots, d_r) \\
&\ge \prod_{i=2}^r (d_i - n + 1) = \prod_{i = 1}^{r} (d_i - n + 1),
\end{align*}
where the first inequality follows from generic projection or from specialization to a union of complete intersections of multidegrees $(1, d_{2}, \ldots, d_{r})$. Assuming the inequality for $d_1 - 1$, we conclude that
\begin{align*}
&\cd_{n,r}(d_{1}-1, d_2, \dots, d_r) + \cd_{n,r-1}(d_2, \dots, d_r) \\
&\geq ((d_{1}-1) - n+1) \cdot \prod_{i=2}^{r} (d_{i}-n+1) + \cd_{n,r-1}(d_2, \dots, d_r) \\
&\geq (d_{1} - n) \cdot \prod_{i=2}^{r} (d_{i}-n+1) + \prod_{i=2}^{r} (d_{i}-n+1) = (d_{1} - n+1) \cdot \prod_{i=2}^{r} (d_{i}-n+1). \qedhere
\end{align*} 
\end{proof}


\subsection{Proof of Theorem~\ref{thm:degree_of_any_curve}} \label{section:proof_of_AC}

In this section, we complete the proof of Theorem~\ref{thm:degree_of_any_curve}. First we will prove the remaining claims of Theorem~\ref{thm:main_covering_degree} using the following result of Paulsen, the asymptotics resulting from Theorem~\ref{thm:explicit_covering_degree}, and the main inductive bound. Then we will use the Grassmanian techniques of \cite{RY22} to reduced Theorem~\ref{thm:degree_of_any_curve} to Theorem~\ref{thm:main_covering_degree}.

\begin{theorem}[{\cite[Proposition 7]{Paulsen21}}]\label{thm:Paulsen}
Let $n \ge 3$ be an integer. If $d$ is an integer such that
\begin{enumerate}
\item $d$ is coprime to $n!$
\item the largest prime power $q$ dividing $d$ satisfies
\[ \left( \binom{n}{2} - 1 \right) \cdot q^n + \left( n! - \binom{n}{2} \right) \cdot q^{n-1} + (2^n + 1) \cdot n! \le d ,\]
\end{enumerate}
then every curve $C$ on a very general hypersurface $X_d \subseteq \P^{n+1}$ satisfies $d \divides \deg{C}$.
\end{theorem}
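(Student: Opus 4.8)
The plan is to run the Kollár--Paulsen degeneration argument, arranging the dimension estimates so that the stated numerical hypothesis on $q$ is exactly what is consumed. Fix $d$ coprime to $n!$; since the quantity on the left of the displayed inequality is increasing in its prime-power argument, \emph{every} prime-power factor $p^a \divides d$ satisfies the same bound, so by the Chinese Remainder Theorem it suffices to prove, for each such prime power $q_0 = p^a$, that the degree of every curve on a very general $X_d \subseteq \P^{n+1}$ is divisible by $q_0$. Note that condition (1) forces every prime divisor of $d$ to exceed $n$, so $p$ is odd and $4 \nmid q_0$. The next step is the standard reduction: for each $e \in \Z_+$ the set of $[X] \in |\O_{\P^{n+1}}(d)|$ for which $X$ is smooth and contains an integral curve of degree $e$ is constructible (it is the image of a morphism out of a Hilbert scheme of pairs), so it is enough to exhibit a one-parameter degeneration $\pi : \X \to \Spec R$ over a DVR $R$, with $\X$ regular and $\X_{\ol\eta}$ the geometric generic hypersurface of degree $d$, such that every curve occurring as a component of the flat limit of a curve on $\X_{\ol\eta}$ has degree divisible by $q_0$. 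Since $\X$ is regular, a degree-$e$ curve $C_\eta \subseteq \X_{\ol\eta}$ has a flat limit cycle $\sum_i m_i C_i$ on $X_0 := \X_s$ with $\sum_i m_i \deg C_i = e$, so we only need to control the $\deg C_i$.

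For $X_0$ I would take Kollár's cyclic-cover model: write $d = q_0 m$ and set $X_0 = \{\,F_d(x_0,\dots,x_n) + y^{q_0}\,G_{d-q_0}(x_0,\dots,x_n) = 0\,\} \subseteq \P^{n+1}$ (with $y=x_{n+1}$), chosen generically subject to $\{F=0\},\{G=0\}\subseteq\P^n$ being smooth and suitably transverse. Then $X_0$ has a single point $P=[0:\dots:0:1]$ of multiplicity $d-q_0 = q_0(m-1)$, and linear projection away from $P$ resolves to a finite morphism $\rho : \Bl_P X_0 \to \P^n$ of degree $q_0$, the $\Z/q_0$-cyclic cover branched along $\{FG=0\}$. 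Because $n\ge 3$ and $d$ is large, a Noether--Lefschetz argument gives $\Pic(X_0) = \Z\,\O(1)$; combined with the relation $y^{q_0} = -F/G$ on $X_0$, this identifies $\O_{X_0}(1)$ with $\rho^*\O_{\P^n}(1)$ up to the exceptional divisor, yielding the numerical identity
\[ \deg_{\P^{n+1}} C \;=\; \deg\!\big(C/\rho(C)\big)\cdot \deg_{\P^n}\rho(C) \;+\; \mult_P(C) \]
for every integral curve $C\subseteq X_0$ (the last term vanishing when $P\notin C$). Since $\rho$ has degree $q_0$, the first summand is a multiple of $q_0$ unless $\rho^{-1}(\ol C)$ is reducible, where $\ol C := \rho(C)$; as $q_0 = p^a$ with $p$ odd, this can happen only when $F/G|_{\ol C}$ is a $p$-th power in $\C(\ol C)$, and in particular all intersection multiplicities of $\ol C$ with the branch divisor are then divisible by $p$.

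The crux — and where the explicit bound is spent — is to show that in any limit cycle $\sum_i m_i C_i$: (i) no component $C_i$ has $\rho^{-1}(\rho(C_i))$ reducible; (ii) no component lies in the branch divisor $\{FG=0\}$; and (iii) the total multiplicity $\sum_i m_i\,\mult_P(C_i)$ at the singular point $P$ is divisible by $q_0$. For (i)--(ii) one shows that, for a very general choice of $(F,G)$, there is no integral curve $\ol C\subseteq \P^n$ of any degree meeting the branch divisor with all multiplicities divisible by $p$ and no curve contained in the branch divisor of "illegal" degree; for a fixed degree $e$ this is a count on the incidence variety of pairs (curve of degree $e$, prescribed high-order tangency data along a divisor of degree linear in $q_0$), and running over all $e$ is harmless because "very general" allows discarding countably many proper closed subsets of $|\O(d)|$. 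The point is that the branch divisor $\{FG=0\}$ and the tangent cone at $P$ both have degree linear in $q_0$, and as the tangency order of $\ol C$ grows the number of conditions grows like $q_0^{\,k}$ for $k$ up to $n$; balancing these against the moduli of curves of degree $e$ produces the $\binom n2 q_0^{\,n}$ and $\big(n! - \binom n2\big)q_0^{\,n-1}$ terms, while the $2^n n!$ term is the combinatorial cost of degenerating $(F,G)$ into sufficiently general position. For (iii) one arranges the family $\X$ so that near $P$ it has the local form $t = (\text{unit})\cdot (F+G)(u)$ with $F+G$ of order $q_0(m-1)$ at $u=0$, which forces any limit cycle through $P$ to carry multiplicity a multiple of $q_0$ there; combining with the $+\mult_P(C)$ term above keeps everything divisible by $q_0$. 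Assembling (i)--(iii): every component contributes a multiple of $q_0$ to $e$, so $q_0\divides e$, and ranging over the prime-power factors of $d$ and invoking CRT gives $d\divides \deg C$.

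I expect the main obstacle to be the uniform dimension count behind (i)--(ii): one must exhibit a single $(F,G)$ avoiding, simultaneously in \emph{every} degree $e$, the locus where $F/G$ restricts to a $p$-th power along some curve — and it is precisely the careful bookkeeping of tangency orders against the (degree $\sim q_0$) branch divisor, together with the genericity-via-degeneration step for $(F,G)$, that forces the exact shape of the numerical bound. A secondary technical point is making (iii) rigorous, i.e. pinning down enough of the local structure of $\X$ at the multiplicity-$q_0(m-1)$ point $P$ to guarantee the claimed $q_0$-divisibility of the limiting multiplicity there.
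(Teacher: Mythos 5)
You should note at the outset that the paper does not prove this statement at all: it is imported verbatim from \cite{Paulsen21} (Proposition~7 there) and used as a black box in the proof of Theorem~\ref{thm:main_covering_degree}, so the only meaningful comparison is with Paulsen's argument. Your sketch follows the general Koll\'ar-style degeneration philosophy, but it breaks down at exactly the two places where the content of that argument lies. First, the chosen central fibre is unsuitable: $X_0=\{F_d+y^{q_0}G_{d-q_0}=0\}$ contains the cone with vertex $P=[0:\cdots:0:1]$ over the $(n-2)$-dimensional complete intersection $\{F=G=0\}\subseteq\P^n$ (if $F(x)=G(x)=0$ then $[x:y]\in X_0$ for every $y$). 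Hence (a) the resolved projection $\Bl_P X_0\to \P^n$ is \emph{not} finite --- it contracts the strict transforms of the lines of this cone --- so it is not the cyclic cover your degree identity is based on; and (b) $X_0$ carries integral curves of degree $1$, so no assertion of the form ``every component of the flat limit has degree divisible by $q_0$'' can hold without an argument controlling the multiplicities of the limit cycle along this cone and at $P$. Your item (iii) is where this would have to happen, but the claimed local normal form at $P$ and the deduction that $q_0$ divides $\sum_i m_i\mult_P(C_i)$ are unsupported, and nothing in (i)--(iii) rules out limit components among the lines of the cone.

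Second, the statement you propose to establish by a dimension count in (i)--(ii) --- that for very general $(F,G)$ there is \emph{no} integral curve in $\P^n$, of any degree, meeting $\{FG=0\}$ with all intersection multiplicities divisible by $p$ --- is simply false: for \emph{every} $(F,G)$, plane curves of the form $\{A^p+\big((FG)|_\Pi\big)B=0\}$ in a plane $\Pi$ have exactly this tangency property, and curves lying on a hypersurface $\{A^pG-FB^p=0\}$ even have $F/G$ restricting to a $p$-th power, so the cover genuinely splits over integral curves of unbounded degree. The count cannot be repaired uniformly in the curve degree $e$: families of degree-$e$ plane curves grow like $e^2/2$, while the tangency conditions against the fixed branch divisor (of degree roughly $2d$) grow only linearly in $e$. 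Paulsen's proof, following Koll\'ar, is structured precisely to avoid needing such a nonexistence statement: one degenerates to the birational image of a morphism $\phi:Y\to\P^{n+1}$ from an auxiliary smooth $n$-fold with globally generated $L=\phi^*\O(1)$, $L^n=d$, all of whose curve classes have $L$-degree divisible by $q$ for structural reasons (in Koll\'ar's original argument $L$ is a $q$-th power), and the hypothesis $\gcd(d,n!)=1$ is spent on transferring that divisibility across the multiple-point locus of $\phi$, where fibres of length up to $n$ occur --- not merely on making $p$ odd, which is the only use your sketch makes of it; the explicit inequality in (2) is what guarantees a suitable pair $(Y,L)$ exists for the given $d$. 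Your assertion that the constants $\binom{n}{2}$, $n!$, $2^n$ ``fall out'' of tangency bookkeeping is not derived, and the steps it would hang on are the ones that fail above, so the crux of the proof is missing rather than merely technical.
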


Using this result, we complete the proof of Theorem~\ref{thm:main_covering_degree} as follows.

\begin{proof}[Proof of Theorem~\ref{thm:main_covering_degree}]

It remains to show that for fixed $(n,r)$, there is an integer $N \coloneqq N(n,r)$ such that for all $d_1, \dots, d_r \ge N$ we have $\cd_{n,r}(d_1, \dots, d_r) = d_1 \cdots d_r$. To do this, we will check that the set $S_n$ of positive integers $d$ satisfying the hypotheses of Theorem~\ref{thm:Paulsen} satisfies the following condition:

\begin{enumerate}
\item[$(\ast)$]
\begin{center}
For any positive integers $k$ and $r$, there exists an integer $N \coloneqq N(n,k,r)$ such that for any $d_1, \dots, d_r \ge N$ there exists a finite array $\{ a_j^i \}_{i,j}$ of elements $a_j^i \in S$ such that
\begin{enumerate}
\item $a^i_j \ge k$ for any $i,j$;
\item any two elements $a^i_j$ and $a^{i'}_{j'}$ are coprime whenever $j \neq j'$;
\item for all $1 \le j \le r$ we have
\[ d_j = \sum_i a^i_j. \]
\end{enumerate}
\end{center}
\end{enumerate}
\noindent
Granting that $S_n$ satisfies $(\ast)$, let us prove the result. Fixing $(n,r)$, choose $k > \max\{ n, 5 \}$ so that $\cd_{n-1,r+1}(a_1, \dots, a_{r+1}) \ge (1 - \tfrac{1}{2}) a_1 \cdots a_{r+1}$ whenever $a_1, \dots, a_{r+1} \ge k$. Such $k$ exists by an application of Theorem~\ref{thm:explicit_covering_degree}). This condition implies that if we split $d_1 = a + b$ for $a,b \ge k$ and $d_2, \dots, d_r \ge k$ then
\begin{align*}
\cd_{n,r}(d_1, \dots, d_r) & \ge \min \{ \cd_{n,r}(a, d_2, \dots, d_r) + \cd_{n,r}(b, d_2, \dots, d_r), \\
& \hspace{1.1cm} \cd_{n-1, r+1}(a,b, d_2, \dots, d_r) \} 
\\
& \ge \cd_{n,r}(a, d_2, \dots, d_r) + \cd_{n,r}(b, d_2, \dots, d_r).
\end{align*}
The second inequality follows from the fact that the second term in the ``min'' expression is automatically larger than $ \tfrac{1}{2} ab d_2 \cdots d_r \ge d_1 \cdots d_r$ since $\tfrac{1}{2} ab \ge a + b$ for $a,b > 5$. By permutation symmetry in the inputs, $\cd_{n,r}$ is a multi-superadditive function $(\N_{\ge k})^r \to \N$ meaning that
\[ \cd_{n,r}\left( \sum_{i_1} d_{1,i_1}, \dots, \sum_{i_r} d_{r, i_r} \right) \ge \sum_{i_1, \dots, i_r} \cd_{n,r}(d_{1,i_1}, \dots, d_{r, i_r}) \]
provided its inputs $d_{j,i_j}$ are all at least $k$. From now on, let $k(n,r)$ denote the minimum value of $k$ allowed in the preceding argument.
\par
Using property $(\ast)$, for any $d_1, \dots, d_r \ge N(n, k(n,r), r)$ we can find a matrix $\{ a^i_j \}$ satisfying (a) and (b) so that $d_j = \sum_i a^i_j$. Since all $a^i_j \ge k(n,r)$, using the multi-superadditivity we get
\[ \cd_{n,r}(d_1, \dots, d_r) \ge \sum_{i_1, \dots, i_r} \cd_{n,r}(a_1^{i_1}, \cdots, a_r^{i_r}). \]
Since $a_1^{i_1}, \dots, a^{i_r}_r$ are elements of $S_n$, any curve $C$ on a very general complete intersection $X \subseteq \P^{n+r}$ of multidegree $(a^1_{i_1}, \dots, a^1_{i_r})$ satisfies $a^{i_j}_j \divides \deg{C}$. Furthermore, because $a_1^{i_1}, \dots, a^{i_r}_r$ are pairwise coprime, $a^1_1 \cdots a^1_r \divides \deg{C}$. Therefore,
\[ \cd_{n,r}(d_1, \dots, d_r) \ \ge \sum_{i_1, \dots, i_r} a_1^{i_1} \cdots a_r^{i_r} \ = d_1 \dots d_r. \]

Now we prove that $S_n$ satisfies $(\ast)$. We first claim that $S_n$ satisfies $(\ast)$ if it contains arbitrarily long sequences of pairwise coprime integers. Indeed, let $g_1, \dots, g_r, g'_1, \dots, g'_r$ be such a sequence of length $2r$ with all entries $\ge k$. Then we claim that the requisite matrices can be built so that $a_j^i$ is either $g_j$ or $g_j'$ for each $i$. Clearly, such a matrix satisfies (a) and (b) so it suffices to show that all sufficiently large sequences $(d_1, \dots, d_r)$ are representable. This uses nothing more than the claim that if $g,g'$ are coprime then any $d > (g-1)(g'-1)$ can be written as $g x + g' y$ for $x,y > 0$, which is  Sylvester's answer to the well-known ``postage stamp'' or ``coin problem.''
\par
Finally, we show that $S_n$ contains arbitrarily long sequences of coprime integers. There is a constant $C_n$ (depending only on $n$ and given explicitly in Theorem~\ref{thm:Paulsen}) such that given an increasing sequence of primes $p_1, \dots, p_s$ such that $p_1 > 2^n$ and
\begin{equation} \label{equation:requisite_inequality}
    C_n p_{s}^n \le p_1 \cdots p_s,
\end{equation}
the product $p_1 \cdots p_s$ is a member of $S_n$. Indeed, $\gcd(p_1 \cdots p_{s}, n!) = 1$ because $p_s > \dots > p_1 > 2^n > n$ so, together with the inequality (\ref{equation:requisite_inequality}), $d = p_1 \cdots p_s$ satisfies the hypotheses (1) and (2) of Theorem~\ref{thm:Paulsen}.
By Bertrand's postulate, we can choose an arbitrarily long sequence $p_1, \dots, p_s$ such that $p_i \le 2^i p_1$ for $i = 1, \dots, s$. Therefore, if
\[ 2^{ns} C_n \, p_1^n \le p_1^s \]
then the inequality (\ref{equation:requisite_inequality}) will be satisfied and thus $p_1 \cdots, p_s \in S_n$. Since $p_1 > 2^n$, the bound $2^{ns} C_n \, p_1^n \le p_1^s$ holds when $s$ is sufficiently large. By starting at the next largest prime after $p_{s}$ and using the same procedure, we can construct a new element of $S_n$ sharing no primes in common with $p_1 \cdots p_s$. Repeating this produces an arbitrarily long sequence of coprime elements of $S_n$. If we track how large $s$ and $p_1$ need to be, one can get a uniform bound of $N = 2^{27r(n+r)^{2}} \cdot (n+r)^{6(n+r)}$.
\end{proof}

For completeness, here we provide the details of how the Grassmanian technique of \cite{RY22} applies to reduce Theorem~\ref{thm:degree_of_any_curve} to Theorem~\ref{thm:main_covering_degree}. 

\begin{proof}[Proof of Theorem~\ref{thm:degree_of_any_curve}] \label{proof:thmA}
We will apply \cite[Theorem 2.3]{RY22} to the locus of points on the universal complete intersection through which passes a curve of degree $< b \coloneqq \cd_{2n-1, r}(d_1, \dots, d_r)$. We have shown that
\[ \cd_{2n-1, r}(d_1, \dots, d_r) \ge (d_1 - 2n + 2) \cdots (d_r - 2n + 2), \]
where $d_i \ge 2n-1$ for all $i$.
Borrowing from the notation of \cite{RY22}, let $\cX_{n, \ul{d}}$ be the universal smooth complete intersection in $\P^{n+r}$ of type $\ul{d} = (d_1, \dots, d_r)$, and let $\cZ^{<b}_{n, \ul{d}} \subseteq \cX_{n, \ul{d}}$ be the space of pairs $(p, [X])$ consisting of a complete intersection $X$ and a point $p \in X$ through which there exists an integral curve $p \in C \subseteq X$ with $\deg{C} < b$.  Since the arithmetic genus of an integral curve in $\P^{n+r}$ of degree $< d$ is bounded above by $\frac{1}{2}(d-1)(d-2)$ (one can see this by projecting to a $\P^{2}$), there are only finitely many Hilbert polynomials that can appear so the Hilbert scheme of such curves is of finite type. Hence, $\cZ^{<b}_{m,\ul{d}}$ is constructible for all $m$. By definition, $\cZ^{<b}_{2n-1, \ul{d}}$ is a proper subset of $\cX_{n, \ul{d}}$. Hence if $m = 2n$ then $\cZ^{<b}_{m-1, \ul{d}}$ has codimension at least $1$ in $\cX_{m-1, \ul{d}}$. Furthermore, if $(p, [X]) \in \cZ^{<b}_{n', \ul{d}}$ and $X = \varphi^{-1}(X' \cap H)$ where $\varphi : \P^{n'+r} \iso H$ is a parametrized hyperplane and $X'$ is a complete intersection of type $\ul{d}$ in $\P^{n'+1+r}$ then $(\varphi(p), X') \in \cZ^{<b}_{n'+1, \ul{d}}$ because $\varphi(C) \subseteq X'$ is a curve through $\varphi(p)$ of degree $< d$. The hypotheses in \cite[Theorem 2.3]{RY22} are satisfied, so we conclude that for every integer $c \geq 1$,
\[ \codim \big(\cZ^{<b}_{m-c, \ul{d}}\subseteq \cX_{m-c, \ul{d}}\big) \ \ge\  c. \]
Since $n = m - n$ and $\cX_{n, \ul{d}}$ has relative dimension $n$ over the moduli space of complete intersections, by setting $c = n$ we conclude that $\cZ^{<b}_{n, \ul{d}}$ has codimension $\ge n$ inside the universal family. However, by definition, the fibers of the second projection from $\cZ^{<b}_{n, \ul{d}}$ are covered by curves and hence $\cX_{n, \ul{d}}$ cannot intersect the general fiber. This completes the proof.
\end{proof}

\section{Covering gonality and multiplier ideals} \label{section:covering_gonality_mult_ideals}

\subsection{Covering Gonality and Separation of Points} 

In this subsection, we recall how positivity of the canonical line bundle yields lower bounds on the covering gonality.

\begin{defn}
Let $X$ be a normal projective variety. For a positive integer $m$, we say that a line bundle $L$ on $X$ \textit{separates $m$ distinct general points} if there exists a Zariski-open $U \subseteq X$ such that for every set of $m$ distinct points $\xi \colonequals \{ p_{1}, \ldots, p_{m} \} \subseteq U$, the restriction map
\[ H^{0}(X, L) \rightarrow H^{0}(X, L \ot \cO_{\xi}) \]
is surjective. If the choice of $U$ is made explicit we say that $L$ \textit{separates $m$ distinct points on $U$}.
\end{defn}

The relevant result we need is the following:

\begin{proposition}[{\cite[Theorem 1.10]{BDELU17} \& \cite[Remark 5.14]{Sta17}}]\label{separatepoints}
    Let $X$ be a normal Gorenstein variety with at worst canonical singularities. If the canonical line bundle $K_X$ separates $m$ distinct general points, then
    \[ \cov(X)\geq m+1. \]
\end{proposition}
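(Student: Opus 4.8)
The plan is to argue by contradiction. Suppose $\cg(X)\le m$ and put $c:=\cg(X)\le m$. Since covering gonality is a birational invariant and, for $X$ normal Gorenstein with at worst canonical singularities, passing to a resolution $\mu\colon\widetilde X\to X$ changes neither $H^0$ of the canonical bundle nor the property of separating $m$ general points (as $\mu$ is an isomorphism over $X^{\reg}$), I may assume at the outset that $X$ is smooth. By the definition of $\cg$ there is a covering family of curves of gonality $c$ on $X$; after normalizing the curves fibrewise, shrinking the base, and passing to a finite cover of the base over which the gonality pencils fit together into a family, I fix a smooth family $\pi\colon\cC\to T$ over an irreducible base, a dominant morphism $f\colon\cC\to X$, and for the general fibre $C:=C_t$ a base-point-free $g^1_c$, i.e.\ a finite degree-$c$ morphism $\phi_t\colon C\to\P^1$, such that $C$ is smooth of gonality exactly $c$ and $f|_C$ is birational onto its image. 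For a general $(t,y)\in\cC$ put $x:=f(y)$ and $D:=\phi_t^{*}\phi_t(y)=y_1+\cdots+y_c$ with $y_1=y$; for general $(t,y)$ this divisor is reduced (as $y$ avoids the finitely many points of $C$ whose $\phi_t$-fibre is non-reduced), so the $y_i$ are distinct, and $h^0(C,\O_C(D))\ge 2$.

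The first step is a general-position statement: for a general $(t,y)$ the $c$ points $p_i:=f(y_i)$ are distinct and all lie in the open set $U\subseteq X$ on which $K_X$ separates $m$ distinct points. This is a standard incidence argument using the fibre product $\cC\times_{\P^1\times T}\cC$, whose two projections to $\cC$ and whose composite morphism to $X$ are all dominant, so the set of $y\in\cC$ for which some two of the companions $y_1,\dots,y_c$ collide, become identified in $X$, or leave $U$, is contained in a proper closed subset. Granting this and padding $\{p_1,\dots,p_c\}$ to a set of $m$ distinct points of $U$ when $c<m$, the hypothesis provides a section $s\in H^0(X,K_X)$ with $s(p_1)\neq 0$ and $s(p_i)=0$ for $2\le i\le c$.

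The heart of the proof is a numerical dichotomy on $C$. Since $\gon(C)=c$, $C$ carries no $g^1_k$ with $k\le c-1$, so $h^0(C,\O_C(E))=1$ for every effective $E$ of degree $\le c-1$; Riemann--Roch then gives $h^0\bigl(C,K_C-(y_2+\cdots+y_c)\bigr)=g-c+1$, where $g$ is the genus of $C$. On the other hand $h^0(C,\O_C(D))\ge 2$ gives, again by Riemann--Roch, $h^0(C,K_C-D)\ge g-c+1$. Since $D\ge y_2+\cdots+y_c$, comparing the two shows $h^0\bigl(C,K_C-(y_2+\cdots+y_c)-y_1\bigr)=h^0\bigl(C,K_C-(y_2+\cdots+y_c)\bigr)$; that is, \emph{every section of $K_C$ vanishing at $y_2,\dots,y_c$ also vanishes at $y_1$}. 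To contradict this I transport $s$ to $C$ via adjunction. Because $X$ is smooth and $f|_C$ is generically an immersion, there is a canonical isomorphism $K_C\cong f^{*}K_X\otimes\det N_f$ with $N_f:=\coker(T_C\to f^{*}T_X)$. Since $\{C_t\}$ covers $X$, generic smoothness of $f\colon\cC\to X$ together with the exact sequence $0\to T_{\cC/T}\to T_{\cC}\to\pi^{*}T_T\to 0$ shows that the image of $T_tT\to H^0(C,N_f)$ generates $N_f$ at the general point $y_1$ (the tangent line $T_{y_1}C$ accounts precisely for the one direction killed by $\coker$); hence $\det N_f$ has a section $\tau$ with $\tau(y_1)\neq 0$. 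Then $\widetilde s:=\tau\cdot f^{*}s\in H^0(C,K_C)$ vanishes at $y_2,\dots,y_c$ but $\widetilde s(y_1)=\tau(y_1)\cdot s(p_1)\neq 0$, a contradiction. Therefore $\cg(X)=c\ge m+1$.

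The step I expect to be the main obstacle is the adjunction-and-positivity input in the previous paragraph: one must check that the determinant line bundle of the possibly torsion-bearing coherent sheaf $N_f$ is genuinely $K_C\otimes f^{*}K_X^{-1}$, and that it is global generation of $N_f$ at $y_1$ coming from the covering family --- not mere positivity of $\det N_f$ --- that forces $\det N_f$ to have a section nonzero at $y_1$ (once $N_f$ is globally generated at $y_1$, so is $\wedge^{n-1}N_f$, hence $\det N_f$). Two routine-but-real points to pin down are the general-position claim, handled by the incidence argument above, and the reduction to a smooth $X$ carrying a covering family whose general member is smooth of gonality \emph{exactly} $c$; the latter uses that a covering family whose general member had smaller gonality would already contradict the minimality defining $\cg(X)$.
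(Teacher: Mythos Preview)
The paper does not supply its own proof of this proposition; it is quoted directly from \cite[Theorem~1.10]{BDELU17} and \cite[Remark~5.14]{Sta17}. Your argument is correct and recovers exactly the proof in those references: the reduction to smooth $X$ via a resolution (using that canonical singularities preserve $H^0$ of the canonical bundle) is the content of \cite[Remark~5.14]{Sta17}, and the core step---Cayley--Bacharach for a gonality fiber on $C$ via Riemann--Roch, transported to $X$ through the adjunction map $f^*K_X|_C\to K_C$ furnished by the covering family---is precisely \cite[Theorem~1.10]{BDELU17}.

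One small simplification of the step you flag as delicate: after cutting the base $T$ down to dimension $n-1$ so that $f\colon\cC\to X$ is a generically \'etale morphism between smooth $n$-folds, pulling back top forms gives an injection $f^*K_X\hookrightarrow K_{\cC}$ which is an isomorphism over the \'etale locus; restricting to a fiber $C$ and using $K_{\cC}|_C\cong K_C$ produces your section $\tau$ of $K_C\otimes f^*K_X^{-1}|_C=\det N_f$ directly, without any need to analyze torsion in $N_f$ or to argue via $\wedge^{n-1}N_f$.
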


\subsection{Singularities, multiplier ideals and vanishing theorems}

In this subsection, we will introduce the relevant multiplier ideal machinery (for more details, see \cite{Ein97, Lazarsfeld04b}).

Although most of our results hold in a more general setting, for simplicity we will assume that $X$ is $\bQ$-Gorenstein.

\begin{defn}\textup{
    Let $(X,D)$ projective log pair such that $D$ is an effective $\mb{Q}$-Cartier $\mb{Q}$-divisor, and fix a log resolution $\mu : X' \rightarrow (X,D)$. Then the \emph{multiplier ideal sheaf}
    \[ \mtc{J}(D)\ =\ \mtc{J}(X,D)\ \subseteq \ \mtc{O}_X \]
    associated to $D$ is defined to be
    \[ \mtc{J}(D)\ \coloneqq\ \mu_{*}\mtc{O}_{X'}(K_{X'}-\lfloor \mu^{*}(K_X + D) \rfloor) \]
    where $\lfloor E \rfloor$ denotes the round-down of a $\bQ$-divisor $E$. Let $Z(X,D)$ (or $Z(D)$ if there is no confusion about the ambient space) denote the scheme defined by the multiplier ideal $\mtc{J}(D)$.
}\end{defn}

\begin{defn}\label{nonklt}\textup{
    Let $(X,D)$ be a log canonical pair. A prime divisor $E$ over $X$ is called a \emph{log canonical place} or \textit{lc place} of $(X,D)$ if there exists a birational model (or equivalently, a log resolution) $\pi:X'\rightarrow X$ of $(X,D)$ such that
    \[ \ord_{E}\big(\pi^{*}(K_X+D)-K_{X'}\big)\ =\ 1. \]
    The closed subset $\pi(E)$ of $X$ is called an \emph{lc center} of $(X,D)$. The \emph{non-klt locus} of $(X,D)$, denoted by $\NKLT(X,D)$, is the union of all lc centers of $(X,D)$.
}\end{defn}

\begin{defn}
Consider a pair $(X, D)$ where $D$ is an effective $\bQ$-Cartier divisor. Given $x\in X$, the \emph{LC-locus} of $D$ at $x$, denoted by $\LC_x(X,D)$, is the union of all irreducible components of $Z(X, c_x \cdot D)$ that pass through $x$, with a reduced scheme structure, where
\[ c_x \  =\  \lct_x(X, D) \ \colonequals\  \inf \{ c' > 0 \mid \mtc{J}(c' \cdot D) \text{ is nontrivial at } x \} \]
is the local log canonical threshold at $x$.
\end{defn}

\begin{remark}
    In the literature, the LC-locus is sometimes referred to as the non-klt locus. In a neighborhood of $x \in X$, we have $$\LC_{x}(X,D)\ =\ \NKLT(X,c_x \cdot D),$$ where $c_x=\lct_x(X,D)$.
\end{remark}

\begin{prop}\textup{(Semi-continuity of multiplier ideals, \cite[Corollary 9.5.39]{Lazarsfeld04b})}\label{semicont}
 Let $p:X \rightarrow T$ be a smooth morphism between smooth irreducible varieties. Let $D$ be an effective $\mb{Q}$-divisor on $X$ whose support does not contain any of the fibers $X_t \coloneqq p^{-1}(t)$, so that for each $t\in T$ the restriction $D_t \coloneqq D|_{X_t}$ is well-defined. Suppose moreover that there is a section $q:T\rightarrow X$ of $p$, and write $q_t\coloneqq(t) \in X_t$. If $q_t \in Z(X_t,D_t) $ for all $ 0\neq t \in T$, then $q_0 \in Z(X_0,D_0)$. 
\end{prop}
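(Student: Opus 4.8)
The plan is to deduce this from two standard facts about multiplier ideals --- the Generic Restriction theorem and the Restriction theorem (see \cite[\S 9.5]{Lazarsfeld04b}) --- together with one elementary local-ring observation. Since $p$ is a smooth morphism and $X,T$ are smooth, every fiber $X_t$ is smooth and, locally at a point, coincides with the unique smooth irreducible component through that point; in particular $X_0 \subseteq X$ is, locally at $q_0$, a smooth prime divisor not contained in $\Supp(D)$. Write $W \colonequals Z(X,D) \subseteq X$ for the closed subscheme cut out by $\mtc{J}(X,D)$. The observation I will invoke twice is: for any $x \in X_t$, the image of $\mtc{J}(X,D)_x \subseteq \mtc{O}_{X,x}$ under the surjection $\mtc{O}_{X,x} \twoheadrightarrow \mtc{O}_{X_t,x}$ is a proper ideal of $\mtc{O}_{X_t,x}$ if and only if $\mtc{J}(X,D)_x$ is proper in $\mtc{O}_{X,x}$, i.e.\ if and only if $x \in W$ --- indeed a proper ideal of a local ring lies in the maximal ideal, which surjects onto the maximal ideal of the quotient.

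First I would show that $q_0 \in W$. By the Generic Restriction theorem there is a dense open $U \subseteq T$ with $\mtc{J}(X,D)\cdot \mtc{O}_{X_t} = \mtc{J}(X_t,D_t)$ for all $t \in U$; taking cosupports and applying the observation above gives $W \cap X_t = Z(X_t,D_t)$ as sets for every $t \in U$. By hypothesis $q_t \in Z(X_t,D_t)$ whenever $t \neq 0$, so $q_t \in W$ for all $t \in U \setminus \{0\}$, which is a dense subset of the irreducible variety $T$. Since $q^{-1}(W)$ is closed in $T$, it must equal all of $T$; in particular $q_0 \in W$, i.e.\ $\mtc{J}(X,D)$ is nontrivial at $q_0$.

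Next I would descend to the special fiber. Working locally at $q_0$, the Restriction theorem applied to the smooth hypersurface $X_0 \subseteq X$ yields $\mtc{J}(X_0,D_0) \subseteq \mtc{J}(X,D)\cdot \mtc{O}_{X_0}$. Since $q_0 \in W$, the observation above shows the right-hand side is a proper ideal at $q_0$; as an ideal contained in a proper ideal is itself proper, $\mtc{J}(X_0,D_0)_{q_0}$ is proper, which is exactly the assertion $q_0 \in Z(X_0,D_0)$.

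The one substantial ingredient is the Generic Restriction theorem, and that is the step I expect to be the real obstacle if one wants a self-contained argument. To prove it one fixes a log resolution $\mu : X' \to X$ of $(X,D)$ and uses generic smoothness to find a dense open $U \subseteq T$ over which $X' \to T$ is smooth and, for each $t \in U$, the map $\mu$ restricts to a log resolution of $(X_t,D_t)$ with $K_{X'/X}|_{X'_t} = K_{X'_t/X_t}$ and $\lfloor \mu^*D \rfloor|_{X'_t} = \lfloor \mu_t^*D_t \rfloor$; combining this with the Local Vanishing theorem (vanishing of the relevant higher direct images under $\mu$) and cohomology and base change produces the equality $\mtc{J}(X,D)\cdot \mtc{O}_{X_t} = \mtc{J}(X_t,D_t)$ over $U$. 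Since this is precisely \cite[Theorem 9.5.35]{Lazarsfeld04b}, I would simply cite it; granting it, the two reductions above are purely formal.
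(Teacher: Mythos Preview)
The paper does not give its own proof of this proposition; it simply cites \cite[Cor.~9.5.39]{Lazarsfeld04b}. Your argument is correct and is essentially Lazarsfeld's own proof: use the Generic Restriction Theorem to identify $W\cap X_t$ with $Z(X_t,D_t)$ over a dense open of $T$, close up along the section to get $q_0\in W$, and then apply the Restriction Theorem to push the non-triviality of $\mtc{J}(X,D)$ at $q_0$ down to $\mtc{J}(X_0,D_0)$.

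One small imprecision: you call $X_0\subseteq X$ a ``smooth prime divisor'' and later a ``smooth hypersurface'', but nothing in the statement forces $\dim T=1$ (and only in the application in \S5 is $T$ a curve). The Restriction Theorem, however, holds for smooth subvarieties of any codimension (\cite[Example~9.5.4]{Lazarsfeld04b}), so this does not affect the validity of your argument; just replace ``hypersurface'' by ``smooth subvariety'' and the proof goes through unchanged.
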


%



The following well-known lemmas will be used in the proof of the main theorems.

\begin{lemma}\label{lemma:multiplicity_existence} \textup{(\cite[Lemma 10.4.12]{Lazarsfeld04b})}
Let $X$ be an $n$-dimensional irreducible projective variety, and $p_1, \dots, p_m \in X$ be smooth points. Let $H$ be an ample $\Q$-divisor on $X$ such that $(H^n) > m \cdot \gamma^n$ for some rational number $\gamma > 0$. Then there exists an effective $\mb{Q}$-divisor $$D\in\big|H \big|_{\mb{Q}} \quad \textup{with}\quad \mult_{p_{i}}(D)>\gamma \textup{ for each } 1 \le i \le m.$$ 
\end{lemma}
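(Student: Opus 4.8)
The plan is a linear-algebra dimension count: one compares the number of linear conditions that high-order vanishing at the $p_i$ imposes on a large linear system $|kH|$ against the asymptotic growth of $h^0(X,\O_X(kH))$.

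Since $\gamma$ is rational and $H$ is an ample (hence $\Q$-Cartier) $\Q$-divisor, I fix a positive integer $k$, divisible enough that $kH$ is an integral Cartier divisor and $k\gamma$ is an integer, and I will let $k\to\infty$ along such integers. Writing $L_k:=\O_X(kH)$, the goal becomes: find a nonzero section $s\in H^0(X,L_k)$ whose zero divisor $E_k$ satisfies $\mult_{p_i}(E_k)\ge k\gamma+1$ for every $i$. Granting this, $D:=\tfrac1k E_k$ lies in $|H|_\Q$, is effective, and has $\mult_{p_i}(D)=\tfrac1k\mult_{p_i}(E_k)\ge\tfrac{k\gamma+1}{k}>\gamma$, as desired.

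For the count, the key local input is that each $p_i$ is a smooth point, so $\O_{X,p_i}$ is a regular local ring of dimension $n$; hence requiring that the germ of $s$ vanish to order $\ge k\gamma+1$ at $p_i$ — i.e.\ lie in $\mathfrak{m}_{p_i}^{k\gamma+1}\cdot(L_k)_{p_i}$ — is a linear condition on $H^0(X,L_k)$ of codimension at most $\dim_\C\big(\O_{X,p_i}/\mathfrak{m}_{p_i}^{k\gamma+1}\big)=\binom{k\gamma+n}{n}$. Summing over the $m$ points, the subspace of $H^0(X,L_k)$ of sections with all the prescribed vanishings has codimension at most $m\binom{k\gamma+n}{n}=\tfrac{m\gamma^n}{n!}k^n+O(k^{n-1})$. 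On the other hand, Serre vanishing gives $H^{>0}(X,L_k)=0$ for $k\gg0$, so by Snapper's theorem (asymptotic Riemann--Roch) $h^0(X,L_k)=\chi(X,L_k)=\tfrac{(H^n)}{n!}k^n+O(k^{n-1})$ along our progression of $k$'s. Therefore the space of sections with the required vanishing has dimension at least
\[ h^0(X,L_k)-m\binom{k\gamma+n}{n}\ =\ \frac{(H^n)-m\gamma^n}{n!}\,k^n+O(k^{n-1}), \]
and since $(H^n)-m\gamma^n>0$ by hypothesis, the right-hand side tends to $+\infty$; in particular it is $\ge 1$ for $k$ large, which yields the desired section $s$ and completes the argument.

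The only step that needs genuine care — really just careful bookkeeping — is tracking the two asymptotic expansions so that the strict inequality $(H^n)>m\gamma^n$ survives absorption of the $O(k^{n-1})$ corrections from $\binom{k\gamma+n}{n}$ and from $\chi(X,L_k)$; since the leading coefficients differ by a fixed positive amount, taking $k$ large enough settles this. The remaining ingredients — existence of a common $k$ making $kH$ Cartier and $k\gamma$ integral, the codimension bound for prescribed vanishing at a regular point, and Serre vanishing plus polynomiality of $\chi$ — are standard.
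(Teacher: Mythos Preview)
Your proof is correct and follows essentially the same approach as the paper's own argument: both use the standard dimension count comparing the asymptotic Riemann--Roch growth $h^0(X,\O_X(kH))=\tfrac{(H^n)}{n!}k^n+O(k^{n-1})$ against the at-most $m\binom{k\gamma+n}{n}=\tfrac{m\gamma^n}{n!}k^n+O(k^{n-1})$ conditions imposed by high-order vanishing at the smooth points $p_i$. Your write-up is in fact more careful than the paper's brief sketch, making explicit the choice of $k$, the strict inequality via $\mult_{p_i}(E_k)\ge k\gamma+1$, and the absorption of lower-order terms.
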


\begin{proof}
This is the multi-point version of \cite[Lemma 10.4.12]{Lazarsfeld04a}, and the proof is almost identical to \cite[Proposition~1.1.31]{Lazarsfeld04a}. Choosing $\ell \gg 0$ sufficiently divisible, we have
\[ h^0(X, \cO_X(\ell H))\ =\  \frac{\ell^n H^n}{n!} + O(\ell^{n-1}), \]
and we need to impose at most $\frac{\gamma^n}{n!} + O(\gamma^{n-1})$ conditions to vanish to order $\ge \gamma$ at each smooth point. Hence, for $\ell \gg 0$ there exist sufficient sections as long as $(H^n) > m \cdot \gamma^n$.
\end{proof}

\begin{lemma}[\textup{\cite[Proposition 9.3.2]{Lazarsfeld04b}}]\label{lemma:multiplicity_multiplier_ideal}
    Let $X$ be a normal projective variety of dimension $n$, and $p$ be a smooth point on $X$. If $D$ is an effective $\mb{Q}$-divisor such that $\mult_p D \geq  n$, then $p$ is contained in $Z(D)$.
\end{lemma}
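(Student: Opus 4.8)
The plan is to make the statement local at $p$ and then reduce it to a one-line discrepancy computation on the blow-up of $p$. Since membership in $Z(D)$ can be checked on stalks and $p$ is a smooth point, I would first replace $X$ by a smooth affine neighbourhood of $p$; so from now on assume $X$ is smooth of dimension $n$.

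Next I would introduce the blow-up $\pi \colon Y := \Bl_p X \to X$ with exceptional divisor $E \cong \P^{\,n-1}$, for which $K_{Y/X} = (n-1)E$, and write $\pi^* D = \widetilde D + m E$, where $\widetilde D$ is the strict transform of $D$ (so $E$ is not a component of $\widetilde D$) and $m = \mult_p D \ge n$. Then I would choose a log resolution $\mu \colon X' \to X$ of $(X,D)$ factoring as $\mu = \pi \circ \rho$ with $\rho \colon X' \to Y$ birational — such a $\mu$ exists by resolving the pair $(Y, \widetilde D + E)$ and composing. Let $E' \subseteq X'$ be the strict transform of $E$ under $\rho$; since $\rho(E') = E$ and $\pi(E) = \{p\}$, the prime divisor $E'$ lies over $p$, i.e.\ $E' \subseteq \mu^{-1}(p)$.

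The key step is to compute the coefficient of $E'$ in the divisor $K_{X'/X} - \floor{\mu^* D}$. Using $K_{X'/X} = K_{X'/Y} + \rho^* K_{Y/X} = K_{X'/Y} + (n-1)\,\rho^* E$ together with $\mu^* D = \rho^* \widetilde D + m\,\rho^* E$, and the fact that $E'$ is neither $\rho$-exceptional nor a component of $\widetilde D$, one sees that $E'$ occurs with coefficient $0$ in both $K_{X'/Y}$ and $\rho^* \widetilde D$ and with coefficient $1$ in $\rho^* E$. Hence $E'$ occurs in $K_{X'/X}$ with coefficient $n-1$ and in $\mu^* D$ with coefficient $m$, so its coefficient in $K_{X'/X} - \floor{\mu^* D}$ equals $(n-1) - \floor{m} \le (n-1) - n = -1 < 0$, where I use that $\floor{m} \ge n$ since $m \ge n$ and $n \in \Z$. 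Consequently $K_{X'/X} - \floor{\mu^* D}$ is not effective on $\mu^{-1}(U)$ for any neighbourhood $U$ of $p$, because every such preimage contains $E'$.

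To finish, I would invoke the independence of the multiplier ideal from the choice of log resolution to write $\mtc{J}(D) = \mu_* \mtc{O}_{X'}\big(K_{X'/X} - \floor{\mu^* D}\big)$; the constant germ $1 \in \mtc{O}_{X,p}$ lies in $\mtc{J}(D)_p$ exactly when $K_{X'/X} - \floor{\mu^* D}$ is effective over some neighbourhood of $p$, which the previous paragraph excludes. Thus $1 \notin \mtc{J}(D)_p$, i.e.\ $\mtc{J}(D)_p \subsetneq \mtc{O}_{X,p}$, which is precisely $p \in Z(D)$. The only delicate points are bookkeeping — the discrepancy identity $K_{X'/X} = K_{X'/Y} + \rho^* K_{Y/X}$, the persistence of $E'$ as a non-exceptional prime divisor with the stated coefficients, and the standard fact that the multiplier ideal does not depend on the chosen log resolution. (Equivalently, one can phrase the computation valuation-theoretically: $\ord_E$ is a divisorial valuation centered at $p$ with log discrepancy $n$ and $\ord_E(D) = \mult_p D \ge n$, so $\lct_p(X,D) \le n/\ord_E(D) \le 1$, which forces $\mtc{J}(D)$ to be nontrivial at $p$.)
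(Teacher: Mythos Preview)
Your argument is correct and is essentially the standard proof: compute the discrepancy along the exceptional divisor of the blow-up at $p$ and conclude that the multiplier ideal is nontrivial there. The paper itself does not supply a proof of this lemma but simply cites \cite[Proposition~9.3.2]{Lazarsfeld04b}, whose proof is exactly the blow-up computation you carried out; so your proposal matches the intended argument.
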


\subsection{Perturbation, cutting down, and lifting}

From now on, let $(X, H)$ be a normal polarized variety of dimension $n$ such that the canonical divisor $K_X$ is Cartier. 
\par
Let $p_1, \ldots, p_m$ be $m$ distinct points on $X$. Consider a rational number $t_1>0$ such that $(t_1 H)^{n} > m n^{n}$. Then for sufficiently divisible and large $k\in\mb{N}$, by Lemma~\ref{lemma:multiplicity_existence} one can find a divisor $D'\in \big|kt_1 H\big|$ such that
\[ \mult_{p_i}(D')\ >\ kn \quad \textup{ for all } i=1, \ldots, m. \]
As a consequence, setting $D\coloneqq\frac{1}{k}D'$, by Lemma~\ref{lemma:multiplicity_multiplier_ideal} we have that
\[ \mtc{J}\big(X, D\big)_{p_i}\ \subseteq \ \mathfrak{m}_{p_i}\quad \textup{for all }i=1, \ldots, m. \]

\begin{prop}\label{prop:cutting}
Let $X$ be a normal $\bQ$-Gorenstein projective $n$-fold, $p_1,\ldots,p_m \in X$ distinct smooth points, and $D$ an effective $\mb{Q}$-divisor such that $Z(D)$ contains $p_i$ for $i=1, \ldots,m$. For any ample divisor $H$ on $Y$, and any $\epsilon>0$, there exists a rational number $0 \le c < \epsilon$ and an effective $\mb{Q}$-divisor $E\sim_{\mb{Q}}c\cdot H$ such that 
\begin{enumerate}
    \item $\LC_{p_i}(X,D+E)$ is irreducible at $p_i$, and 
    \item $\dim\LC_{p_i}(X,D+E)\leq \dim\LC_{p_i}(X,D)$
\end{enumerate} for every $i=1, \ldots,m$.
\end{prop}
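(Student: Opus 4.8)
The plan is to perturb $D$ by a small multiple of a general divisor passing through the log canonical centers of $(X,D)$ at the $p_i$: adding such a divisor strictly lowers the local log canonical threshold at every $p_i$, and at the new threshold the log canonical locus gets squeezed down onto a single one of the old centers, without its dimension going up.

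First I would fix, for each $i$, $c_i:=\lct_{p_i}(X,D)$ and list the irreducible components $W_{i,1},\dots,W_{i,s_i}$ of $\LC_{p_i}(X,D)=\NKLT(X,c_iD)$ through $p_i$, and choose one log resolution $\mu\colon X'\to X$ simultaneously resolving $(X,D)$, all the $W_{i,j}$, and a general member of $|kH|$ for $k\gg 0$; this makes only finitely many prime divisors $G$ over $X$ with center through some $p_i$ relevant. For $k\gg 0$ sufficiently divisible, let $A\in|kH|$ be a \emph{general} effective divisor vanishing to order $\ge\mu_{i,j}$ along $W_{i,j}$ for all $i,j$, where the integers $\mu_{i,j}\ge 1$ are chosen generically (pinned down below); for $k\gg 0$ the corresponding twisted ideal sheaf is globally generated away from $\bigcup_{i,j}W_{i,j}$, so by Bertini a general such $A$ is smooth away from $\bigcup_{i,j}W_{i,j}\cup\Sing(X)\cup\Supp(D)$, vanishes to the prescribed orders, and is otherwise as generic as possible. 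Put $E:=\tfrac{c}{k}A\sim_{\mathbb{Q}}c\cdot H$ for a rational $0<c<\epsilon$ chosen small at the end.

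Next I would analyze, separately for each $i$ and via the discrepancy calculus on $X'$, the threshold $\sigma_i(c):=\lct_{p_i}(X,D+E)$ as $c\searrow 0$. Writing $a_G$ for log discrepancies: if the center of $G$ meets $U$ but is \emph{not} an lc center of $(X,c_iD)$, then $a_G(X,c_iD)>0$ is a fixed positive number, so $G$ stays discrepancy-positive for $D+E$ once $c$ is small; and if $\ord_G A=0$ then $G$ contributes the old value $c_i$. Hence $c_i-\sigma_i(c)$ is of order $c$, and to leading order in $c$ the threshold is realized exactly by those $G$ with $a_G(X,c_iD)=0$, center through $p_i$, maximizing $\ord_G A/\ord_G D$ (all such $\ord_G D>0$ since $p_i$ is a smooth point). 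So for $c$ small, $\LC_{p_i}(X,D+E)$ near $p_i$ is the reduced union of the centers of those maximizing $G$; each is an lc center of $(X,c_iD)$ through $p_i$, hence irreducible and through $p_i$, hence contained in a component of $\NKLT(X,c_iD)$ through $p_i$, i.e.\ inside $\LC_{p_i}(X,D)$ --- giving $\dim\LC_{p_i}(X,D+E)\le\dim\LC_{p_i}(X,D)$, which is part~(2). For part~(1) I would choose the $\mu_{i,j}$ generically so that, for each $i$, the function $G\mapsto\ord_G A/\ord_G D$ on the finite set of relevant lc valuations through $p_i$ attains its maximum at a unique \emph{maximal} center; this is a non-empty open condition on the $\mu_{i,j}$, and crucially it involves no conflict between the indices $i$, since we only need \emph{some} center to win uniquely at $p_i$, not a prescribed one. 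Taking $c<\epsilon$ and shrinking $U$ so the leading-order estimates are uniform then finishes the argument.

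The main obstacle is precisely part~(1): making rigorous that after adding $\tfrac{c}{k}A$ the new log canonical locus at each $p_i$ is genuinely irreducible rather than (as every tie-breaking argument must exclude) a union of incomparable centers, and achieving this for all $m$ points with a single divisor. This is where the genericity of $A$ and of the $\mu_{i,j}$, together with the reduction to finitely many valuations on the common resolution, carry the weight; one must also verify that the higher-order-in-$c$ interactions of $A$ with the existing singularities cannot create a new, larger maximal center, which is handled by taking $c$ sufficiently small. The external inputs are all standard: Bertini, Serre vanishing / basepoint-freeness for $\mathcal{O}_X(kH)$ twisted by the relevant ideal sheaves, and the discrepancy bookkeeping of \cite{Lazarsfeld04b}; notably Lemma~\ref{lemma:multiplicity_multiplier_ideal} and the explicit construction of the initial $D$ play no role here --- only the existence of the loci $\LC_{p_i}(X,D)$.
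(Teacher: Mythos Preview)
Your approach is genuinely different from the paper's. The paper proceeds \emph{iteratively}: at each step it selects a single component $Z_1'$ (of minimal dimension) of one reducible $\LC_{p_1}(X,D)$, takes $E_1$ general in $|\mathcal{I}_{Z_1'}(\ell H)|$, and replaces $D$ by $D+\lambda_1 E_1$ for $0<\lambda_1\ll 1$. Since $E_1$ only increases $\ord_G$ for those $G$ whose center lies in $Z_1'$, the new $\LC_{p_1}$ is automatically contained in the irreducible set $Z_1'$; one then checks that the $\LC_{p_i}$ for $i\ge 2$ do not grow in dimension, and repeats at most $mn$ times. No tie-breaking among valuations is ever needed: confinement to a single irreducible $Z_1'$ is built in. By contrast, you attempt a single-step perturbation by one divisor $A$ vanishing along \emph{all} the $W_{i,j}$ with prescribed multiplicities $\mu_{i,j}$, and you must then argue that a generic choice of the $\mu_{i,j}$ makes the new $\LC_{p_i}$ irreducible at every $p_i$ simultaneously.

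That tie-breaking step is where your argument has a real gap. You assert that ``$G\mapsto \ord_G A/\ord_G D$ attains its maximum at a unique maximal center'' is a non-empty open condition on the $\mu_{i,j}$, but you never verify non-emptiness. On your fixed log resolution, for a general $A$ in the prescribed linear system one has $\ord_G A=\sum_{j}\mu_{i,j}\,\ord_G\mathcal{I}_{W_{i,j}}$, so these are linear functions of $\mu$; the danger is that two lc places $G_1,G_2$ with \emph{incomparable} centers through $p_i$ can yield \emph{identical} linear functions, namely whenever $\ord_{G_1}\mathcal{I}_{W_{i,j}}/\ord_{G_1}D=\ord_{G_2}\mathcal{I}_{W_{i,j}}/\ord_{G_2}D$ for every $j$ (this happens, for instance, when both centers sit inside the same collection of $W_{i,j}$'s and the local data are symmetric). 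In that case no choice of $\mu$ separates them, and if this common value is the maximum the new $\LC_{p_i}$ is reducible. You would need a further argument showing either that such identically-tied pairs never simultaneously realize the maximum, or---what amounts to reinventing the paper's proof---that when they do one can pass to a strictly smaller LC-locus and iterate. Your reduction to finitely many valuations and your verification of part~(2) are fine; it is precisely the irreducibility claim, which you yourself flag as the ``main obstacle'', that is asserted rather than proved.
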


\begin{proof}
Set $Z_i\coloneqq\LC_{p_i}(X,D)$ and $d_i(D)\coloneqq\dim_{p_i}Z_i$ for each $i=1, \ldots,m$. Our proof will proceed by reducing the invariant
\[ s \ \colonequals\  \max \big( \{ 0 \} \cup \{d_i(D) \ |\ Z_{i} \textup{ is reducible} \} \big). \]
If $s = 0$, then we may choose $E = 0$ and the theorem follows. Otherwise, assume that $s>0$ and $d_1(D)$ achieves the maximum $s$. Therefore, $Z_1$ is reducible and $\dim_{p_1}Z_1\geq 1$. Choose an irreducible component $Z_1'$ of $Z_1$ of minimal dimension passing through $p_{1}$, and fix an integer $\ell \gg 1$ such that the sheaf $\mtc{I}_{Z_1'}(\ell H)$ is globally generated. Take a general element $f_1 \in \Gamma(Y,\mtc{I}_{Z_1'}(\ell H))$, and set $E_1\coloneqq\divisor(f_1)$. Then for any rational number $0<\lambda_1\ll1$, the support of $\LC_{p_1}(X, D+\lambda_1 E_1)$ satisfies one of the following conditions:
\begin{enumerate}
    \item it is exactly $Z_1'$, or
    \item it is contained in some proper algebraic subset of $Z_1'$,
\end{enumerate}
because $E_{1}$ only contributes to the log canonical thresholds of the log canonical places whose centers are contained in $Z_{1}'$. Fix an index $i\geq2$. If $p_i\notin Z'_1$, then $\LC_{p_i}(X,D+\lambda_1E_1)=\LC_{p_i}(X,D)$. Now assume that $p_i$ is contained in $Z'_1$. If $\LC_{p_i}(X,D)$ is irreducible, then by choosing $\lambda_1$ sufficiently small, we can guarantee that $\LC_{p_i}(X,D+\lambda_1E_1)=\LC_{p_i}(X,D)$. On the other hand, if $\LC_{p_i}(X,D)$ is reducible, then there are two further cases:
\begin{enumerate}[(a)]
    \item $Z_1'$ is a component of $\LC_{p_i}(X,D)$, in which case $\LC_{p_i}(X,D+\lambda_1E_1)$ is contained in $Z_1'$ (by the first sentence of the previous paragraph), or 
    \item $Z_1'$ is not a component of $\LC_{p_i}(X,D)$, in which case $\LC_{p_i}(X,D+\lambda_1E_1)$ is contained in $\LC_{p_i}(X,D)$ for $0<\lambda_1\ll 1$.
\end{enumerate}
In both cases, we see that $d_{i}(D + \lambda_{1} D_{1}) \leq d_i(D)$.

Replacing $D$ by $D+\lambda_1 E_1$ for some fixed $0<\lambda_1<\frac{\epsilon}{nm}$, and repeating this reduction procedure, after at most $mn$ steps, one will get a $\mb{Q}$-divisor 
\[ E\ \coloneqq\ \sum \lambda_jE_j\ \sim_{\mb{Q}}\ c\cdot H \] 
which satisfies the requisite properties.
\end{proof}

\begin{lemma}[Generic lifting]\label{genericlifting}
    Let $(X,\mtc{O}_X(1))$ be a normal polarized variety, and $Z\subseteq X$ be a closed subvariety. Then for any $\mb{Q}$-divisor $D_Z \in \big| \mtc{O}_Z(d) \big|_{\mb{Q}}$ on $Z$, there exists an effective $\mb{Q}$-divisor $D\in |\mtc{O}_X(d)|_{\mb{Q}}$ on $X$ such that $D|_Z=D_Z$. 
\end{lemma}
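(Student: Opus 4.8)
The plan is to realize the $\mb{Q}$-divisor $D_Z$ on $Z$ as the restriction of a $\mb{Q}$-divisor on $X$ in the same $\mb{Q}$-linear series, by clearing denominators and then lifting an honest divisor. Write $D_Z = \frac{1}{k}\sum a_i G_i$ where the $G_i$ are prime divisors on $Z$ and $a_i\in\mb{Z}_{\ge 0}$, chosen so that $\sum a_i G_i$ is an integral divisor in $|\mtc{O}_Z(kd)|$ for a suitably divisible $k$. It then suffices to lift the single effective Cartier divisor $kD_Z\in|\mtc{O}_Z(kd)|$ to an effective divisor in $|\mtc{O}_X(kd)|$ and divide by $k$ at the end: if $D'\in|\mtc{O}_X(kd)|$ is effective with $D'|_Z = kD_Z$, then $D := \frac{1}{k}D'$ does the job (note $D'|_Z$ makes sense because, $Z$ being a variety, one can arrange $D'$ not to contain $Z$; see below).

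The key step is the surjectivity of the restriction map on global sections. First I would pass to a large multiple: replace $kd$ by $mkd$ for $m\gg 0$ so that $\mtc{O}_X(mkd)$ is very ample and, by Serre vanishing, $H^1(X,\mtc{I}_Z(mkd)) = 0$, where $\mtc{I}_Z$ is the ideal sheaf of $Z$ in $X$. The exact sequence $0\to\mtc{I}_Z(mkd)\to\mtc{O}_X(mkd)\to\mtc{O}_Z(mkd)\to 0$ then gives that $H^0(X,\mtc{O}_X(mkd))\to H^0(Z,\mtc{O}_Z(mkd))$ is surjective. Applying this with the section cutting out $mkD_Z$ (i.e. the $m$-th power of the section defining $kD_Z$, which lies in $|\mtc{O}_Z(mkd)|$), we obtain a section $s\in H^0(X,\mtc{O}_X(mkd))$ restricting to it; set $D' := \divisor(s)$, so $D'|_Z = mkD_Z$ as divisors, and then $D := \frac{1}{mk}D'\in|\mtc{O}_X(d)|_{\mb{Q}}$ satisfies $D|_Z = D_Z$.

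One technical point to address is that $D' = \divisor(s)$ restricts to a well-defined divisor on $Z$ — this requires that no component of $D'$ contains $Z$. Since $Z$ is irreducible and $s|_Z\neq 0$ (it equals the nonzero section cutting out $mkD_Z$), the section $s$ does not vanish identically on $Z$, so indeed $Z\not\subseteq\Supp(D')$ and $D'|_Z$ is the effective Cartier divisor $\divisor(s|_Z) = mkD_Z$. The main obstacle is really just bookkeeping: ensuring the denominators are cleared uniformly (a single $k$ works since $D_Z$ is a finite $\mb{Q}$-combination) and that the Serre vanishing multiple $m$ is chosen after $k$ is fixed. No positivity of $X$ beyond projectivity is needed, only ampleness of $\mtc{O}_X(1)$, which holds by hypothesis.
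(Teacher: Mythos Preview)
Your proof is correct and follows essentially the same approach as the paper: clear denominators, use Serre vanishing for $\mtc{I}_Z$ to get surjectivity of the restriction map on global sections, lift the section, and divide. Your version is somewhat more explicit in separating the denominator-clearing step from the Serre-vanishing step and in checking that $Z\not\subseteq\Supp(D')$, but these are presentational refinements rather than substantive differences.
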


\begin{proof}
    Pick a sufficiently large integer $p\gg0$ such that $H^1(X,\mtc{I}_Z(pd))=0$. Then the restriction map $$H^0(X,\mtc{O}_X(pd))\ \longrightarrow \ H^0(Z,\mtc{O}_Z(pd))$$ is surjective. Hence there exists a divisor $D'\in \big|\mtc{O}_X(pd)\big|$ such that $D'|_{Z}=pD_Z$, and the $\mb{Q}$-divisor $\frac{1}{p}D'$ is as desired. If $p$ is sufficiently large and $D'$ is a general divisor satisfying the restriction condition, then we call $D$ a \emph{generic lifting} of $D_Z$.
\end{proof}

\begin{prop}[Cutting down LC-locus]\label{cutting}
    Let $X$ be a normal $\bQ$-Gorenstein projective variety and $p\in X$ be a point. Let $D$ be an effective $\mb{Q}$-divisor on $X$ with $\lct_p(X,D)=1$ such that
    \[ Z\ =\ \LC_p(X,D) \]
    is irreducible of dimension $d$ at $p$. Fix $m$ general smooth points $p_1, \ldots,p_m \in Z$, and let $B$ be any effective $\mb{Q}$-divisor on $X$ with $Z \nsubseteq \Supp B$. Assume that
    \[ \mult_{p_i}(B|_Z) \ >\  d \ =\  \dim Z \]
    for any $i=1, \ldots,m$, where the multiplicity is computed on $Z$. Then for any $0<\epsilon\ll1$, the multiplier ideal
    \[ \mtc{J}(X,(1-\epsilon)D+B) \]
    is non-trivial at $p_i$. If moreover $\mtc{J}(X,D+B)=\mtc{J}(X,D)$ away from $Z$, then in a neighborhood of each $p_{i}$, $Z((1-\epsilon)D+B)$ is a proper algebraic subset of $Z$.
\end{prop}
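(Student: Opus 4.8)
The plan is to use Kawamata's subadjunction theorem to transfer the problem onto $Z$, where the hypothesis $\mult_{p_i}(B|_Z)>\dim Z$ makes it transparent via Lemma~\ref{lemma:multiplicity_multiplier_ideal}, and then lift the resulting non-klt place back up to $X$; the slack in the \emph{strict} multiplicity inequality is precisely what pays for the coefficient $(1-\epsilon)$ on $D$.

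\emph{Set-up.} First I would work in a neighborhood of the $p_i$. The non-lc locus of $(X,D)$ and the finitely many lc centers of $(X,D)$ properly contained in $Z$ all meet $Z$ in proper closed subsets (they miss the lc point $p\in Z$), so a general point $p_i\in Z$ avoids them; hence we may assume $(X,D)$ is log canonical near $p_i$, that $Z$ is the unique, hence minimal, lc center of $(X,D)$ through $p_i$, and that $p_i$ is a smooth point of $X$ and of $Z$. In particular $\mtc{J}(X,D)=\mtc{I}_Z$ near $p_i$. Now apply Kawamata's subadjunction theorem to the minimal lc center $Z$: $Z$ is normal and there is an effective $\bQ$-divisor $D_Z$ on $Z$ with $(K_X+D)|_Z\sim_{\bQ}K_Z+D_Z$ and $(Z,D_Z)$ klt (a small ample perturbation, if needed, is harmlessly absorbed into the $\epsilon$ below). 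I would also fix a log resolution $f\colon Y\to X$ of $(X,D+B)$ carrying an lc place $S$ of $(X,D)$ with $f(S)=Z$, chosen compatibly with the subadjunction so that $f|_S$ is crepant for $K_Z+D_Z$.

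\emph{The non-trivial assertion.} Since $Z\nsubseteq\Supp B$, the divisor $B|_Z$ is a genuine effective $\bQ$-divisor on $Z$ with $\mult_{p_i}(B|_Z)>\dim Z$, so for $0<\epsilon'\ll1$ we still have $\mult_{p_i}\big((1-\epsilon')B|_Z\big)>\dim Z$, and Lemma~\ref{lemma:multiplicity_multiplier_ideal} applied on the normal projective variety $Z$ at the smooth point $p_i$ produces a divisorial valuation $v$ over $Z$ with center through $p_i$ whose discrepancy for $(Z,(1-\epsilon')B|_Z)$ is $\le -1$; since $D_Z\ge0$ and $p_i\in\Supp(B|_Z)$, the discrepancy of $v$ for $(Z,D_Z+B|_Z)$ is then $\le -1-\rho$ for some genuine $\rho>0$. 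Using the lc place $S$ and adjunction for discrepancies — which identifies the discrepancy of $(X,D)$ along valuations supported over $S$ with that of $(Z,D_Z)$ along the corresponding valuations over $Z$ — one lifts $v$ to a divisorial valuation $E$ over $X$ with center through $p_i$ satisfying $\ord_E(f^*B)=\ord_v(B|_Z)$ and $A_{X,D}(E)=A_{Z,D_Z}(v)$. Then the discrepancy of $(X,(1-\epsilon)D+B)$ along $E$ equals that of $(Z,D_Z+B|_Z)$ along $v$ plus the harmless term $\epsilon\cdot\ord_E(f^*D)$, hence is $\le -1-\rho+\epsilon\cdot\ord_E(f^*D)\le -1$ for $\epsilon$ small enough, uniformly over the finitely many $p_i$. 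Therefore $\mtc{J}(X,(1-\epsilon)D+B)$ is non-trivial at $p_i$. I expect this lifting to be the main obstacle: because $Z$ may have large codimension the lc place $S$ is only a fibration over $Z$, not birational, so matching discrepancies requires the full strength of subadjunction (the discriminant and moduli parts of $D_Z$) and a careful choice of the lift of $v$ through the fibres of $f|_S$.

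\emph{The proper subset.} Finally, assume $\mtc{J}(X,D+B)=\mtc{J}(X,D)$ away from $Z$. Since $\mtc{J}(X,D)=\mtc{I}_Z$ near $p_i$, the pair $(X,D+B)$ is klt on a neighborhood of $p_i$ with $Z$ removed; as $(1-\epsilon)D+B\le D+B$, the same holds for $(X,(1-\epsilon)D+B)$, so $Z((1-\epsilon)D+B)\subseteq Z$ near $p_i$. On the other hand, near the generic point $\eta_Z$ of $Z$ the divisor $B$ vanishes (as $Z\nsubseteq\Supp B$) while $(X,(1-\epsilon)D)$ is klt there (since $1-\epsilon<\lct_{\eta_Z}(X,D)$), so $(X,(1-\epsilon)D+B)$ is klt at $\eta_Z$. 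As $Z$ is irreducible, $Z((1-\epsilon)D+B)$ is therefore a proper closed subset of $Z$ near $p_i$, and by the previous part it contains $p_i$.
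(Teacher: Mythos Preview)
The paper's proof is a one-line citation: it invokes \cite[Proposition~10.4.10]{Lazarsfeld04b} (the Angehrn--Siu cutting lemma) and only remarks that the multi-point statement follows because the closed subset of $Z$ that a point must avoid is fixed, so finitely many general points can avoid it simultaneously. Lazarsfeld's argument works directly on a fixed log resolution $f\colon Y\to X$ of $(X,D)$: for $p_i\in Z$ general the fibre of the chosen lc place $S\to Z$ over $p_i$ is smooth and transverse to the remaining exceptional configuration, and the non-klt valuation $E$ is built \emph{explicitly} by blowing up along that fibre, so that $\ord_E(D)$ and $\ord_E(B)$ are computed by hand. No subadjunction is used.

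Your strategy via Kawamata subadjunction is genuinely different, but the lifting step has a real gap. Subadjunction produces the klt boundary $D_Z$ on $Z$, but it does \emph{not} furnish a correspondence $v\mapsto E$ between valuations over $Z$ and valuations over $X$ satisfying both $A_{X,D}(E)=A_{Z,D_Z}(v)$ and $\ord_E(B)=\ord_v(B|_Z)$; the moduli part of $D_Z$ and the positive-dimensional fibres of $f|_S$ obstruct any such identification, as you yourself suspect. What one can invoke instead is an inversion-of-adjunction theorem of the form ``$(X,D+tB)$ is lc near $Z$ if and only if $(Z,D_Z+tB|_Z)$ is lc'' (Kawamata, Hacon, \dots), which together with your multiplicity bound would yield some $E$ with $A_{X,D+(1-\epsilon')B}(E)<0$; but that theorem is at least as deep as the proposition you are proving, and its output $E$ comes with no a priori control on $\ord_E(D)$, so the passage from $D$ to $(1-\epsilon)D$ requires a further argument. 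The Lazarsfeld construction sidesteps all of this precisely because $E$ is explicit. Your final paragraph establishing that $Z((1-\epsilon)D+B)$ is a proper subset of $Z$ near $p_i$ is, on the other hand, correct and is the standard reasoning.
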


\begin{proof}
    Following the proof of \cite[Proposition 10.4.10]{Lazarsfeld04b}, we need to choose each $p_{i}$ to avoid a closed subset of $X$ (and we do so for every $i$). Because there are finitely many such points, we can choose all of the $p_{i}$ to avoid a fixed proper subscheme of $X$.
\end{proof}


\section{Separation of points \`a la Angehrn--Siu} \label{section:separation_of_points}

In this section, we will present the proof of Theorem~\ref{thm:AS_type_separation_of_points}. The goal is to construct a $\mb{Q}$-divisor on $X$ by cutting down the LC locus near a point and lifting the divisors, such that the Nadel vanishing theorem applies. This method is motivated by the proof of a weak version of Fujita's conjecture by Angehrn--Siu \cite{AS95}.

\begin{proof}[Proof of Theorem~\ref{thm:AS_type_separation_of_points}]
Let $n \coloneqq \dim{X}$ be the dimension of $X$. First we consider the case where $H$ is very ample. By slicing down to curves, we get a bound $\deg_H W \ge \alpha$ for any positive-dimensional subvariety $W \subset X$ intersecting $U$ from the hypothesis on curves.
For any choice of $m$ distinct points $p_1, \ldots,p_m \in U$, we will construct a $\mb{Q}$-divisor $D$ which satisfies the following conditions:
    \begin{itemize}
        \item $D \sim_{\mb{Q}} c \cdot H$, where $0 < c < d$ is a rational number,
        \item the support of $\mtc{O}_X/\mtc{J}(X,D)$ contains $p_1, \ldots,p_m$, and
        \item some point $p_{j}$ is an isolated point of $\Supp\mtc{O}_X/\mtc{J}(X,D)$.
    \end{itemize}
    
    Supposing that the above holds, we claim that this implies the statement of the theorem. Nadel vanishing \cite[Theorem 9.4.17]{Lazarsfeld04b} gives
    \[ H^1\big(X,\mtc{O}_X(K_X + d H)\otimes \mtc{J}(X, D)\big) \ =\ 0, \]
    and hence the restriction map
    \[ H^0\big(X,\mtc{O}_X(K_X + d H)\big) \longrightarrow H^0\big(X, \mtc{O}_X(K_X + d H)\otimes \mtc{O}_X/\mtc{J}(X,D) \big) \]
    is surjective. Since $\mtc{O}_X/\mtc{J}(X,D)$ has zero-dimensional support near $p_j$ and contains all $p_1, \dots, p_m$, there exists a section of $\mtc{O}_X(K_X + d H)$ whose evaluation at $p_j$ is $1$ and at all other $p_i$ is zero. Removing the point $p_{j}$ from $\{p_1, \ldots, p_m\}$ and repeating the same construction for $\{p_1, \ldots, \hat{p}_{j}, \ldots, p_m \}$, this proves that the evaluation map 
    \[  H^0\big(X,\mtc{O}_X(K_X + d H)\big) \longrightarrow H^0\big(X, \mtc{O}_X(K_X + d H)\otimes \mtc{O}_X / \I_{p_1, \dots, p_m} \big) \]
    is an upper triangular matrix with $1$s along the diagonal when restricted to the span of the divisors constructed above. Hence the evaluation map is surjective.
    \par 
    Now let us begin with the construction of the desired divisor. Choose a positive rational number
    \[ a \ \coloneqq \ \sqrt[^{^n}]{\frac{m n^n}{\alpha}} + \delta \ =\ n\sqrt[^{^n}]{\frac{m}{\alpha}} + \delta, \]
    where $\delta > 0$ is a sufficiently small real number. By Lemma~\ref{lemma:multiplicity_existence}, there exists an effective $\mb{Q}$-divisor $D' \in\big|a H \big|_{\mb{Q}}$ such that $\mult_{p_i}(D') > n$ for each $i=1, \ldots,m$. 
    Recall that our assumption in particular says that $H^n = \deg_H{X} \ge \alpha$. Using Proposition~\ref{prop:cutting}, one can replace $D'$ with a small perturbation $D' + \eta H$ such that $\LC_{p_i}(X,D')$ is irreducible at $p_i$ for each $i$. Setting $\lambda_i \coloneqq \lct_{p_i}(X;D')$, we may reorder
    \[ 0\ <\ \lambda_m\ \leq\  \lambda_{m-1}\ \leq\ \cdots\ \leq\ \lambda_1\ \leq\  1, \]
    where the upper bound comes from the fact that each $p_{i}$ is contained in $Z(D')$ by Lemma~\ref{lemma:multiplicity_multiplier_ideal}.
    Setting $D_{1} \coloneqq \lambda_1 D'$, we have produced a divisor $D_1 \sim_{\Q} a_1 H$, where $a_1 \coloneqq \lambda_1 \left(a + \eta \right)$. Since $\eta$ can be made arbitrarily small, we may define a perturbation $\delta_1$ such that
    \[ a_1 \ = \  n\sqrt[^{^n}]{\frac{m}{\alpha}} + \delta_1, \]
    where $\delta_1$ can be chosen to be $< \epsilon$ for any fixed $\epsilon > 0$. Note that $\delta_{1}$ may be negative, and we are not claiming a lower bound.
    
    By replacing $D'$ with $D_{1}$ as in the previous paragraph, we may assume that $\lambda_1 = 1$. Notice that there could be more than one point at which the log canonical threshold of $D_1$ is equal to $1$. Let $W_1, \ldots,W_v$ be the distinct log canonical loci of $(X,D_1)$ near the points $p_i$ for which we have $\lct_{p_i}(X;D_1)=1$. Reordering the points, we may assume that $p_1, \ldots,p_{\ell}$ are exactly the points with $\lambda_i=1$ and with $W_1$ as the (local) log canonical center. By adding a small multiple of a general divisor in the linear series $\big|\mtc{I}_{W_j}(mH)\big|$ for $j=2, \ldots,v$, we may assume that 
    \[ \lambda_1\ =\ \cdots\ =\ \lambda_{\ell}\ =\ 1\ >\ \lambda_{\ell+1},\  \ldots,\ \lambda_m, \] 
    and $p_1, \ldots,p_{\ell}$ have the same log canonical center $W_1$. 
    \par 
    Let $Z_1 \coloneqq \NKLT(X,D_1)$ be the non-klt locus of $(X, D_1)$, which contains $p_1, \ldots,p_m$ and contains $W_1$ as a component. The dimension of $Z_1$, denoted by $d_2$, is at most $n - 1$. If $\dim_{p_i}{Z_1} = 0$ for some $i$, then we may set $D = D_1$ to construct the necessary divisor $D$. Indeed, in this case $D_1 \sim_{\Q} c H$ where $c \le a_1$, and it is possible to choose $\eta, \delta_1, a_1$ such that $a_1 < d$ if and only if $m > \alpha (d/n)^n$, which holds when $m > \alpha (d - 2n)$. 
    \par 
    Otherwise, we are going to cut down the dimension of the non-klt locus. By assumption, we have $\deg_H{W_1} \ge \alpha$ since $p_1 \in W_1 \cap U$. First assume that $p_1, \ldots,p_m$ are all smooth points of $Z_1$ that are sufficiently general so that Proposition~\ref{cutting} applies. In order to apply the proposition, set $k \coloneqq d_2$ and choose a rational number 
    \[ a_2 \ \coloneqq \  \sqrt[k]{\frac{m k^{k}}{\alpha}} + \delta_2 \ =\  k\sqrt[k]{\frac{m}{\alpha}} + \delta_2 \]
    such that $\delta_2 > 0$ is sufficiently small. Then again by Lemma~\ref{lemma:multiplicity_existence}, one can choose an effective $\mb{Q}$-divisor $D_2'$ on $Z_1$, which is $\mb{Q}$-linearly equivalent to $a_2 H|_{Z_1}$, such that
    \[ \mult_{p_i}D_2 ' \ >\ k \quad \textup{for } i=1, \ldots,\ell. \]
    Let $D_2$ be a generic lift of $D_2'$ to $X$, as in Lemma \ref{genericlifting}. Then $Z((1-t)D_1 + D_2)$ contains $p_1, \ldots,p_m$ for any sufficiently small $t > 0$ by Proposition~\ref{cutting}. 

    If there is a point $p_i$ in the singular locus of $Z_1$, then one can construct $D_2$ using the same limiting process in the proof of Angehrn--Siu's theorem \cite{AS95} (also see \cite[Section 10.4.C, Step 2]{Lazarsfeld04b}). More precisely, let $( 0\in T )$ be a smooth pointed curve and let $u_i : T \rightarrow Z_1$ be a morphism with $u_{i}(0)=p_i$, where $i=1, \ldots,\ell$. Writing $p_{i,t}=u_i(t)\in Z_1$, we may assume that $p_{i,t}$ is a smooth point of $Z_1$ for $t\neq 0$, which is sufficiently general so that we can cut down the LC-locus using Proposition~\ref{cutting}, as in the previous paragraph. Let $D'_{t,2}$ be the divisor that one constructs with respect to points $u_1(t), \ldots,u_l(t)$. We can also assume that for $t \neq 0$, the divisors $D'_t$ vary in a flat family parameterized by $T -\{0\}$. Setting $D'_2 \coloneqq \lim_{t\to 0} D_{t,2}$, we have that $D'_2$ is an effective $\mb{Q}$-divisor on $Z_1$ that lies in the desired linear series and passes through $p_i$ for $i=1, \ldots,\ell$. Taking a generic lift $D_{2}$ of $D'_2$ to $X$, which does not contain $Z_1$ since they intersect properly, we may further assume that
$$\mtc{J}(X,D_1+cD_2)\ =\ \mtc{J}(X,D_1)$$
away from $Z_1$ for every $0 < c < 1$. Now for $t\in T$ near $0$, we may extend $D_2$ to a family of $\mb{Q}$-divisors $D_{t,2}$ which are lifts of $D'_{t,2}$. It follows from Proposition~\ref{cutting} that $$p_{i,t}\ \in\  Z\big((1-t_1)D_1+D_{t,2}\big)$$ for any $0\neq t\in T$ and $0<t_1\ll1$. By the semi-continuity of multiplier ideals (Proposition~\ref{semicont}), a similar containment holds at $p_i=p_{i,0}$:
\[ p_{i}\ \in\  Z\big((1-t_1)D_1+D_2\big). \]

Now that we have produced $D_{1}\sim_{\bQ}a_1H$ and $D_{2}\sim_{\bQ}a_2H$, for all $0 < t_1 \ll 1$ the subscheme $Z((1 - t_1) D_1 + D_2)$ contains $p_1, \dots, p_m$ and is properly contained in $W_1$ near $p_1, \dots, p_{\ell}$.
Now we fix a rational number $0<t_1\ll1$ such that $$\lct_{p_i}(X,(1-t_1)D_1)\ < \ 1 \quad \textup{for all } i > \ell.$$ 
Then there exists a rational number $0 < t_2 < 1$ such that
\begin{itemize}
    \item $\lct_{p_i}\big(X,(1-t_1)D_1+t_2 D_2\big) \leq 1$ for $i=1, \ldots,\ell$;\footnote{This inequality is automatically true for $i=\ell+1, \ldots,m$ since $D_2$ is a generic lift. }
    \item $\max\limits_{1\leq i\leq \ell}\lct_{p_i}\big(X,(1-t_1)D_1+t_2 D_2\big) = 1$, and
    \item $\dim_{p_i} \LC_{p_i}\big(X,(1-t_1)D_1+t_2 D_2\big) < \dim_{p_i} \LC_{p_i}(X,D_1)$, for $i=1, \ldots,\ell$.
\end{itemize}
Applying Lemma~\ref{prop:cutting}, by increasing $t_2$ slightly we may assume that $\LC_{p_i}\big(X,(1-t_1)D_1+t_2 D_2\big)$ is irreducible for any $i=1, \ldots,m$. Let $i=1, \ldots,\ell'$, where $\ell' \leq \ell$, be all the indices such that $$\lct_{p_i}(X,(1-t_1)D_1+t_2 D_2)\ =\ 1.$$ Observe that for any $i=1, \ldots,\ell'$, the scheme $Z((1-t_1)D_1 + t_2 D_2)$ near $p_i$ is an irreducible proper closed subset of $Z_1$, and one has that $$\mtc{J}(X,(1-t_1)D_1 + t_2 D_2)_{p_i}\ \subsetneq \ \mtc{O}_{X,p_i}.$$

Repeating this cutting down and perturbation procedure, we produce divisors $D_j \sim_{\Q} a_j H$. After $s \leq n$ steps, one achieves a $\mb{Q}$-divisor $D\coloneqq(1-t_1)D_1 + t_2 D_2 + \cdots + t_s D_s$ satisfying: 
\begin{itemize}
    \item $0 < t_1, \ldots,t_s < 1$; 
    \item the multiplier ideal sheaf $\mtc{J}(X,D)$ is non-trivial at $p_i$ for any $i$; and
    \item there is an index $i_0\in\{1, \ldots,m\}$ such that $\lct_{p_{i_0}}(X,D)=\max\{\lct_{p_i}(X,D)\}=1$, and $p_{i_0}$ is an isolated point of $Z(D)$.
\end{itemize}
This divisor $D$ satisfies the desiderata as long as $d H - D$ is ample. In fact, we will show that $d H - (D_1 + \cdots + D_s)$ is ample. This is true as long as
\[ d \ >\  a_1 + \cdots + a_s. \]
Consider the sequence $d_1, \dots, d_s$ defined by $d_1 \coloneqq n$ and 
\[ d_{j+1} \ \coloneqq \  \dim{\LC_{p_1}\big(X, (1 - t_1) D_1 + t_2 D_2 + \cdots + t_j D_j\big)} \]
for $2\leq j\leq s$. This sequence is strictly decreasing by construction, and the $a_{j}$ are defined by the formula
\[ a_j \ =\  d_j \sqrt[^{^{d_j}}]{\frac{m}{\alpha}} + \delta_j. \]
The numbers $d_1, \dots, d_s$ form a subsequence of $n, n-1, \dots, 1$, so 
\begin{equation}\label{eq:perturbationBound}
a_1 + \cdots + a_s\ = \ \sum_{j=1}^s \left( d_j \sqrt[^{^{d_j}}]{\frac{m}{\alpha}} + \delta_j \right) \ \le\  \sum_{j = 1}^n j \sqrt[^{^j}]{\frac{m}{\alpha}} + \delta_1 + \cdots + \delta_s.
\end{equation}
When we choose the perturbations $\delta_j$, they can be made arbitrarily small. If we choose $\delta_j$ less than $c(n,d) / n$ for the $c(n,d)$ defined in Lemma~\ref{numerical} (which is positive by the aforementioned lemma) then the inequality \eqref{eq:perturbationBound} is satisfied when $m = \alpha \cdot \lceil d - 2 n \sqrt{d} \rceil$.
\par 
In general, when $H$ is ample but not very ample, we can choose $d_0$ such that $d_0 H$ is very ample. Then by assumption $$\deg_{d_0 H} C \ = \ d_0\cdot (H.\ C) \  \ge\ d_0 \cdot \alpha$$ for any curve $C \subset X$. Hence running the argument for $H$ replaced by $d_0 H$ and $\alpha$ replaced by $d_0 \alpha$, we obtain that $|K_X + d H|$ separates at least $\alpha d_0 \big(d/d_0 - 2 n \sqrt{d/d_0}\big)$ points. Hence the error term $\delta(X, H)$ is increased by a factor of $\sqrt{d_0}$.
\end{proof}

\begin{remark}
    In the statement of Theorem~\ref{thm:AS_type_separation_of_points}, the subset $U \subseteq X^{\reg}$ can be chosen to be the complement of a countable union of proper closed subsets \cite[Proposition 5.17]{Sta17}.
\end{remark}

The following lemma is elementary.

\begin{lemma}\label{numerical}
Fix integers $n > 1$ and $d \ge 4n^2$, and let $m \coloneqq \lceil d - 2n \sqrt{d} \rceil$. Then the quantity
$$    c(n,d) \ \coloneqq\  \frac{1}{2} \bigg( d - \sum_{j = 1}^n j \sqrt[j]{m} \bigg)$$
is strictly positive.
\end{lemma}

Combining Theorem~\ref{thm:AS_type_separation_of_points} with Proposition~\ref{separatepoints} immediately gives:

\begin{theorem}[Covering gonality of divisors] \label{thm:AS_type_gonality}
Let $(X, H)$ be a polarized normal Gorenstein variety. Suppose $\alpha > 0$ is a number and $U \subseteq X^{\reg}$ is a nonempty open set such that any curve $C \subseteq X$ meeting $U$ satisfies $\deg_H{C} \ge \alpha$. Any integral divisor $V \in |d H|$ with at worst canonical singularities that meets $U$ satisfies 
\[ \cg(V)\ \ge\  \big(d - \delta \sqrt{d}\big) \cdot \alpha + 1, \]
where the $\delta$ is the same as in Theorem~\ref{thm:AS_type_separation_of_points}.
\end{theorem}

Note that $V$ is automatically Gorenstein in the set-up above so it makes sense to say that $K_V$ separates many points when $K_X + V$ does. The advantage of allowing an open set $U \subsetneq X$ in the theorem is that, in applications, it suffices to bound from below the degree of a covering family of curves, which is usually easier to estimate than the degree of an arbitrary subvariety. 

\begin{remark} \label{rmk:compareAS}
When $H$ is very ample, by slicing with hyperplanes in $|H|$ we see that the hypothesis on curves is equivalent to assuming that any positive dimensional subvariety $W \subseteq X$ meeting $U$ satisfies $\deg_H{W} \ge \alpha$. For subvarieties $W$ of dimension $> 1$, the hypothesis $\deg_H{W} \ge \alpha$ does not behave linearly when scaling $H$. In \cite{AS95}, the authors assume a hypothesis of the form $\deg_H{W} \ge \alpha^{\dim{W}}$, which does scale linearly.
Directly applying the main theorem of \cite{AS95}, we get a gonality bound of $\covgon(Y) \ge (d/n) \cdot \alpha^{1/n} - O(n)$, which is insufficient to give a positive answer to \cite[Problem 4.1]{BDELU17}.
\par
In fact, Theorem~\ref{thm:degree_of_any_curve} shows that general complete intersections of high degree have the curious property that the minimal degree of a subvariety of dimension $k$ always equals $\deg{X}$, which is independent of $k$, and is achieved by linear slices. Therefore, we cannot extract a better result for applications to measures of irrationality on complete intersections by taking into account higher dimensional subvarieties that might appear in the NKLT locus. 
\end{remark}

Finally, we use Theorem~\ref{thm:AS_type_gonality} to give our desired application to measures of irrationality of complete intersections:

\begin{proof}[Proof of Theorem~\ref{thm:main_gonality}]

Let $Y\subseteq\mb{P}^{n+r}$ be a general complete intersection of type $(d_2, \ldots,d_r)$, and let $X$ be a divisor on $Y$ linearly equivalent to $d_1 H$, where $H$ is the class of a hyperplane section on $Y$. By Proposition~\ref{separatepoints}, it suffices to show that the linear series $\big| K_X \big|$ separates $m$ distinct general points for some integer $m$ satisfying the desired inequality. In fact, we aim to prove a stronger statement: the linear series $\big|K_Y + X\big|$ on $Y$ separates $m$ distinct general points. This follows immediately from Theorem~\ref{thm:AS_type_gonality} applied to the pair $(Y, H)$, setting $d \coloneqq d_{1}$, and Theorem~\ref{thm:main_covering_degree} to bound the degree of any curve of $Y$ outside some proper Zariski closed subset.
\end{proof}

\section{Complements and further questions}

In this section, we present a few interesting applications and state some related questions that arise from our work. 

\subsection{Genus bounds} \label{section:genus_bounds}

As mentioned in the introduction, Theorem~\ref{thm:degree_of_any_curve} implies a lower bound on the geometric genus of any curve in a general complete intersection: 

\begin{theorem}
Let $X \subset \P^{n+r}$ be a very general complete intersection of type $(d_1, \dots, d_r)$ with all $d_i \ge 2n$. Then any curve $C \subset X$ satisfies
\[ p_g(C) \ \ge\  \tfrac{1}{2} (d_1 + \cdots + d_r - 2n - r) (d_1 - 2n + 2) \cdots (d_r - 2n + 2) + 1 .\]
For $d_i \ge N(n,r)$, as in Theorem~\ref{thm:degree_of_any_curve}, one has that 
\[ p_g(C)\ \ge \ \tfrac{1}{2} (d_1 + \cdots + d_r - 2n - r) \cdot d_1 \cdots d_r + 1 \]
\end{theorem}

\begin{proof}
This follows immediately from \cite[Proposition 3.1]{Chen24} and Theorem~\ref{thm:degree_of_any_curve}.
\end{proof}

\noindent When the degrees are large, note that this bound is very close to the genus of a smooth curve coming from slicing by a linear subspace $\Lambda$:
\[ p_g(X \cap \Lambda) \ = \ \tfrac{1}{2} (d_1 + \cdots + d_r - n - r - 1)\cdot  d_1 \cdots d_r + 1.  \]

\subsection{Measures of association} \label{section:association}

A recent paper of Lazarsfeld and Martin \cite{LM23} introduces a measure of complexity between birational classes of varieties of dimension $n$. The starting point for their paper is to consider all \emph{correspondences}
\begin{center}
\begin{tikzcd}
& Z \arrow[ld, swap, "a"] \arrow[rd, "b"]
\\
X & & X'
\end{tikzcd}
\end{center}
where both maps are generically finite and dominant, and $Z$ is a smooth projective variety with $Z \to X \times X'$ birational onto its image. The \textit{correspondence degree} between $X$ and $X'$ is defined as
\[ \mathrm{corr.deg}(X, X') \ \colonequals \ \min_{(Z,a,b)} \{ \deg a \cdot \deg b \}. \]
Then Theorem~\ref{thm:AS_type_separation_of_points} implies a bound on the correspondence degree between two general complete intersections.

\begin{theorem}
Let $X, X' \subset \P^{n+r}$ be two very general complete intersections of type $(d_1, \dots, d_r)$. Assume that all $d_1, \dots, d_r \ge n$. Then 
\[ \mathrm{corr.deg}(X, X')\ \ge\  \bigg( \big(d_1 - 2n \sqrt{d_1}\big) \big(d_2 - n + 1\big) \cdots \big(d_r - n + 1\big) + 1\bigg)^2. \]
\end{theorem}

\begin{proof}
We follow the method of proof for \cite[Theorem A]{LM23}. A correspondence $Z \to X \times X'$ induces a trace map
\[ \tr_Z \ :\  H^0(X, \Omega_X^n) \ \longrightarrow\ H^0(X', \Omega_{X'}^n) \]
given by the map of $\Q$-Hodge structures
\[ Z_* : H^n(X, \mathbb{Q})_{\mathrm{prim}} \ \longrightarrow\ H^n(X', \mathbb{Q}). \]
 For very general $X, X'$, these Hodge structures do not admit any non-zero morphism between each other, and hence the trace map is zero. However, $K_X$ separates at least 
\[ R \ \coloneqq\ \big(d_1 - 2n \sqrt{d_1}\big)\cdot \big(d_2 - n + 1\big) \cdots \big(d_r - n + 1\big) + 1 \]
distinct points on some open set $U \subset X$ by the proof of Theorem~\ref{thm:AS_type_separation_of_points}. As $Z \to X \times X'$ is birational onto its image, the fibers of $Z \to X$ and $Z \to X'$ generically only intersect at one point, so the trace map cannot be zero if $\deg{(Z \to X')} \le R$. Reversing the role of $X$ and $X'$, this proves the claim. 
\end{proof}

\subsection{Generalizations} \label{section:generalizations}

Let $X$ be a smooth projective variety and let $H_1, \dots, H_r$ be \textit{very} ample divisors. Let $H$ be a fixed ample divisor against which we measure degree. For a tuple $\ul{d} = (d_1, \dots, d_r)$, we consider the complete intersection 
\[ X_{\ul{d}} \ =\  \mathbb{V}(s_1, \dots, s_r) \] defined by $s_i \in H^0(X, \struct{X}(d_i H_i))$.
The argument in the proof of Theorem~\ref{thm:degree_of_any_curve} similarly gives:
\begin{theorem}
For a general $X_{\ul{d}}$ with all $d_i \ge n \coloneqq \dim{X} - r$, one has
\[ \covdeg_H(X_{\ul{d}}) \  \ge\  (d_1 - n + 1) \cdots (d_r - n + 1) \cdot \covdeg_H(H_1 \cap \cdots \cap H_r). \]
In particular, one has the following lower bound on $\covgon(X_{\ul{d}})$ as in Theorem~\ref{thm:main_gonality}:
\[  \bigg( \big(d_1 - 2(n+1) \sqrt{d_1}\big)\cdot \big(d_2 - n + 1\big) \cdots \big(d_r - n + 1\big) \bigg)\cdot \covdeg_{H_1}(H_2 \cap \cdots \cap H_r) + 1. \]
\end{theorem}
\noindent
 Of course, fixing a subvariety $X \subset \P^N$, Theorem~\ref{thm:degree_of_any_curve} directly implies a multiplicative degree bound for curves on a general $\O_X(1)$-complete intersection inside $X$. Indeed, those curves are contained in a general complete intersection in $\P^N$. However, this bound involves $N$ and hence is worse than what appears above. Furthermore, the method is able to incorporate intersections of different ample classes as well as to give refined bounds for those curves moving in a covering family (cf.\ Theorem~\ref{thm:main_covering_degree}).

Given these results, one might wonder about more general subvarieties cut out by a large twist of a vector bundle. 

\begin{problem}
Let $(X, \struct{X}(1))$ be a polarized smooth projective variety with $\struct{X}(1)$ very ample. Let $\E$ be a vector bundle on $X$ of rank $r$. For a general section $s \in H^0(X, \E(d))$ for $d \gg 0$ give a lower bound on $\covdeg(V(s), \struct{X}(1))$ in terms of $d$. Is there a bound of order $O(d^r)$?
\end{problem}

\subsection{Sharp bounds}

Several further questions naturally arise from our work. Throughout this section, let $X \subseteq \P^{n+r}$ denote a (very) general complete intersection variety cut out by polynomials of sufficiently large degrees $d_{1}, \ldots, d_{r}$.

\begin{problem}
    Find two constants $d=d(n,r)$ and $N=N(n,r)$ such that if $d_1, \ldots,d_r>d$, then
    \[ \irr(X)\ \geq \ d_1\cdots d_r -N. \]
\end{problem}

Note that for hypersurfaces (where $r = 1$), by \cite[Theorem C]{BDELU17} one can choose the constants $N = 1$ and $d = 2n+1$. The case $n = 1$ of complete intersection curves would follow from \cite{HLU20} and further analysis of secant varieties to general complete intersection curves.

Following conjectures of Griffiths--Harris \cite{GH85} on the divisibility of degrees of curves on very general hypersurfaces, one can more ambitiously ask the following.

\begin{question}
Is there a constant $d=d(n,r)$ such that for $d_1, \ldots,d_r>d$, any curve $C \subseteq X$ satisfies
    \[ d_{1} \cdots d_{r} \divides \deg C\ ? \]
\end{question}

Finally, in light of Theorem~\ref{thm:degree_of_any_curve} it is natural to ask if the only curves of minimal degree are those arising as linear slices.

\begin{question}
Are the minimal degree curves $C \subseteq X$ of the form $C = X \cap \Lambda$ for some linear subspace $\Lambda$ of dimension $r+1$ when $X$ is general of large degree?
\end{question}

\noindent One might even wonder if this is true as long as a general such $X$ does not contain any lines. This would provide a generalization of a theorem of Wu \cite{Wu90}.

Following the discussion after Theorem~\ref{thm:AS_type_separation_of_points}, one can ask what the optimal statement for separation of large numbers of points by adjoint linear series, given bounds on the degrees of curves. Can we do better than an $O(\sqrt{d})$ error term? Is it possible to obtain an $O(1)$ error term as is the case when $X$ is a surface?

\begin{question}
Let $(X, H)$ be a polarized smooth projective variety. Suppose that $\deg_H C \ge \alpha$ for all curves $C \subset X$. Does there exist a constant $\delta \coloneqq \delta(X,H)$ such that $|K_X + d H|$ separates at least $(d - \delta) \cdot \alpha$ distinct points? Can we take $\delta$ to depend only on $\alpha, \mathrm{vol}(H)$, and $\dim{X}$?
\end{question}


\appendix
\section[tocentry={Stable maps to SNC degenerations}]{Stable maps to SNC degenerations}\label{appendix}

\begin{center}
\chapterauthor{Mohan Swaminathan}
\end{center}

In this appendix, we discuss a necessary condition for a stable map in the special fiber of a simple normal crossings (SNC) degeneration to deform to the generic fiber; see Proposition \ref{prop:log_limit} for the precise statement. Variants of this statement are well-known to experts in relative/logarithmic Gromov--Witten theory. Our exposition is self-contained and does not assume any familiarity with relative GW theory or logarithmic geometry.

Proposition \ref{prop:log_limit} is an algebro-geometric version of \cite[Theorem 1.3]{FT22a} and its proof is similar to that of \cite[Theorem 6.1.1]{QChen10}. The statement is reminiscent of Jun Li's \emph{predeformability} condition in relative GW theory \cite{Li01,GV05}, but we avoid the use of \emph{expanded degenerations} of the target by following ideas from log GW theory \cite{GS13,QChen10,AC14,FT22b}. For an SNC degeneration whose total space is regular, the data whose existence is guaranteed by Proposition \ref{prop:log_limit} is very similar (if not identical) to what one needs in order to upgrade an ordinary stable map in the special fiber to a \emph{stable log map over the standard log point}. In view of this, the result of this appendix might be implicitly contained in \cite{ACGS20}.

\subsection{Geometric setup}\label{subsec:appx_geom_setup}

Let $R$ be a DVR which is given by the local ring of a smooth affine curve over $\C$ at a closed point. Let $\m$ be its maximal ideal and $K$ its fraction field. Let $s$ and $\eta$ be the closed point and generic point of $\Delta:=\Spec(R)$ respectively. 

Let $f:\X\to \Delta$ be an SNC degeneration (in the sense of Definition \ref{definition:snc_degeneration}). Write the closed fiber $\X_s = \bigcup_{i\in I}X_i$ of $f$ as the union of finitely many smooth varieties. For any $\emptyset\ne J\subseteq I$, the intersection $\bigcap_{i\in J}X_i$ is either empty or is smooth of codimension $|J|-1$ in $X_i$ for each $i\in J$.

Let $\X^{\reg}\subseteq\X$ be the regular locus. For $i\in I$, define the line bundle $\L_i := \strusheaf{\X^{\reg}}(\X^{\reg}\cap X_i)$ along with its tautological section $\tau_i$ vanishing along $\X^{\reg}\cap X_i$. Let $\tau$ be the tautological section of $\strusheaf{\Delta}(s)$ vanishing at $s$. We have a canonical isomorphism of line bundles
\begin{equation}\label{eqn:snc_degeneration_line_bundles}
    \Psi: f^*(\strusheaf{\Delta}(s))|_{\X^{\reg}} \ \xrightarrow{\sim} \ \textstyle\bigotimes_{i\in I}\L_i
\end{equation}
characterized by mapping $(f^*\tau)|_{\X^{\reg}}$ to $\bigotimes_{i\in I}\tau_i$. Together with the natural $\C$-linear isomorphism $T_{\Delta,s}=(\m/\m^2)^\vee \ \xrightarrow{\sim}\ \strusheaf{\Delta}(s)|_s$, this restricts over $s\in\Delta$ to the trivialization
\begin{align*}
    \Psi_s:T_{\Delta,s}\otimes_{\C}\strusheaf{\X^{\reg}_s} \xrightarrow{\sim} \textstyle\bigotimes_{i\in I}\L_i|_{\X^{\reg}_s}.
\end{align*}
Here $\X^{\reg}_s$ is the fiber over $s$ of $f|_{\X^{\reg}}:\X^{\reg}\to\Delta$.

\indent Let $\mu_s:C\to\X_s$ be a stable map. Define the open subset $U := \mu_s^{-1}(\X^{\reg})\subseteq C$ and let $\Gamma = (V,E)$ be its dual graph, where each vertex $v\in V$ (resp. edge $e\in E$) of $\Gamma$ corresponds to an irreducible component $U_v$ (resp. node $q_e$) of $U$. An edge $e$ joins vertices $v,v'\in V$ exactly when the irreducible components of $U$ on which the node $q_e$ lies are $U_v$ and $U_{v'}$ (note that $e$ is allowed to be a loop, i.e., $v = v'$ is allowed). For $v\in V$, define $\partial U_v = U_v\cap\bigcup_{v'\ne v} U_{v'}$.

\subsection{Main result}\label{subsec:appx_snc_predeformability}

We describe some conditions that must be satisfied by $\mu_s:C\to\X_s$ if it deforms to the generic fiber $\X_\eta$ of $f:\X\to\Delta$ in the sense of Definition~\ref{def:deforms}. The rough idea is to take a $1$-parameter family that deforms $\mu_s$ to the generic fibre and then to record, for each $v \in V$ and $i \in I$, the ``speed" $n_i(v)$ and the ``infinitesimal direction" $\sigma_{v,i}$ at which the component $U_v$ of $U$ ``sinks into" the component $X_i$ of $\X_s$.

\begin{proposition}[cf.\ \cite{FT22a,QChen10}]\label{prop:log_limit}
    If $\mu_s:C\to \X_s$ deforms to $\X_\eta$, then we can find the following data:
    \begin{enumerate}[label = \normalfont(\textbf{\alph*})]
        \item a $\C$-vector space $M$, an integer $n\ge 1$, and an isomorphism $\Phi_\Delta:M^{\otimes n}\xrightarrow{\sim}T_{\Delta,s}$;
        \item a function $\delta:E\to\Z_{\ge 1}$ and isomorphisms $\Phi_e:M^{\otimes \delta(e)}\xrightarrow{\sim}T_{U_v,q_e}\otimes_\C T_{U_{v'},q_e}$ for each edge $e\in E$ joining distinct vertices $v,v'\in V$; and
        \item functions $n_i:V\to\Z_{\ge 0}$ for $i\in I$ and rational sections $\sigma_{v,i}$ of the line bundle 
        \begin{align*}
            \calHom_{\hspace{0.02in} \strusheaf{U_v}} \! \! \big(M^{\otimes n_i(v)}\otimes_\bC \strusheaf{U_v},\mu_s^*\L_i|_{U_v}\big) 
        \end{align*}
        for $v\in V$ and $i\in I$, which are regular and nowhere vanishing on $U_v\setminus\partial U_v$
    \end{enumerate}
    with the following properties.
    \begin{enumerate}[\normalfont(i)]
        \item For $v\in V$ and $i\in I$, $n_i(v) = 0$ if and only if $\mu_s(U_v)\not\subseteq X_i$ and, in this case, $\sigma_{v,i} = (\mu_s^*\tau_i)|_{U_v}$.
        \item For $v\in V$, we have $\sum_{i\in I} n_i(v) = n$ and $\bigotimes_{i\in I}\sigma_{v,i} = (\mu_s^*\Psi_s)\circ(\Phi_\Delta\otimes\normalfont\text{id}_{\strusheaf{U_v}})$.
        \item Whenever $q_e\in \partial U_v$, define $m_i(v,e) := \ord_{q_e}(\sigma_{v,i})\in\Z$ and denote by
        \[
            0\ne \wt\sigma_{v,i}(q_e)\in \Hom_\C\big(T_{U_v,q_e}^{\otimes m_i(v,e)}\otimes_\C M^{\otimes n_i(v)},(\mu_s^*\L_i)|_{q_e}\big)
        \]
        the lowest order term of $\sigma_{v,i}$ at $q_e$. 
        
        For each edge $e\in E$ joining distinct vertices $v,v'\in V$ and $i\in I$, we have
        \[
            n_i(v') = n_i(v) + m_i(v,e)\,\delta(e).
        \]
        In particular, $m_i(v,e) + m_i(v',e) = 0$. Moreover, under the isomorphism
        \[
            T_{U_v,q_e}^{\otimes m_i(v,e)}\otimes_\C M^{\otimes n_i(v)} \ \xrightarrow{\sim} \ T_{U_v',q_e}^{\otimes m_i(v',e)}\otimes_\C M^{\otimes n_i(v')}
        \]
        induced by $\Phi_e$, the lowest order terms $\wt\sigma_{v,i}(q_e)$ and $\wt\sigma_{v',i}(q_e)$ are identified.
    \end{enumerate}
\end{proposition}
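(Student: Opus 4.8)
The plan is to make the deformation hypothesis concrete as a one‑parameter family over a DVR, push the whole statement down to a bookkeeping problem about divisors of pulled‑back boundary equations on the total space, and read off the data $(M,n,\Phi_\Delta)$, $(\delta,\Phi_e)$, $(n_i,\sigma_{v,i})$ from leading terms. \textbf{Reduction to a DVR.} First I would invoke the remark after Definition~\ref{def:deforms}: since $\eta$ specializes to $s$, the deforming family may be taken over $\Spec S$ for a DVR $S$ with residue field $\C$ and uniformizer $u$, together with a dominant morphism $q:\Spec S\to\Delta$ with $q(t_0)=s$ and $q$ hitting $\eta$. Then $q^*\tau$ vanishes to order $n$ at $t_0$ for some $n\ge 1$ with leading coefficient a unit $c$; I would set $M:=(\fm_S/\fm_S^2)^\vee=T_{T,t_0}$, and the induced map $\fm_R/\fm_R^2\to\fm_S^n/\fm_S^{n+1}$ would dualize to the isomorphism $\Phi_\Delta:M^{\ot n}\iso T_{\Delta,s}$ of datum (a). Pulling back the family gives a flat proper family of prestable curves $\pi:\cC\to\Spec S$ and a morphism $\mu:\cC\to\X$ over $\Delta$ with central fibre $(C,\mu_s)$.

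\textbf{Defining $n_i(v),\sigma_{v,i}$ via the key observation.} I would work on $\cU:=\mu^{-1}(\X^{\reg})\supseteq U$. The crucial point is that each $X_i$ lies in $\X_s$, so the zero divisor of $\tau_i$ (and of $f^*\tau$) misses $\X_\eta$; hence $\mu^*\tau_i$ and $\mu^*f^*\tau$ are \emph{nowhere vanishing on the generic fibre} $\cC_\eta$, so their zero loci on $\cU$ lie in $\cC_{t_0}=\cU\cap\cC_s$, whose prime components are the $U_v$. Since $\cC$ is regular at the generic point of each $U_v$, the valuation $\ord_{U_v}$ is defined, and I would put $n_i(v):=\ord_{U_v}(\mu^*\tau_i)$, so that $\divisor(\mu^*\tau_i)$ is an effective Cartier divisor on $\cU$ supported on $\bigcup_v U_v$ with multiplicity $n_i(v)$ along $U_v$. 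Using $\struct{\cC}(\cC_{t_0})|_{U_v}=(\pi|_{U_v})^*\big(\struct{S}(t_0)|_{t_0}\big)\cong M\ot_\C\struct{U_v}$, I would define $\sigma_{v,i}$ to be the leading coefficient of $\mu^*\tau_i$ along $U_v$ — formed on $U_v\setminus\partial U_v$, where the local equation of $\cC_{t_0}$ is simply $u$, then extended to $U_v$ as a rational section — so that $\sigma_{v,i}\in\calHom_{\struct{U_v}}(M^{\ot n_i(v)}\ot_\C\struct{U_v},\mu_s^*\L_i|_{U_v})$. Because $\divisor(\mu^*\tau_i)$ has no prime component besides the $U_v$, the section $\sigma_{v,i}$ is automatically regular and nowhere vanishing on $U_v\setminus\partial U_v$; this is exactly where the key observation is used. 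Property (i) is then immediate: if $\mu_s(U_v)\not\subseteq X_i$ then $(\mu^*\tau_i)|_{U_v}=\mu_s^*\tau_i|_{U_v}\not\equiv 0$, so $n_i(v)=0$ and $\sigma_{v,i}=\mu_s^*\tau_i|_{U_v}$.

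\textbf{Property (ii).} I would pull back the isomorphism $\Psi$ of \eqref{eqn:snc_degeneration_line_bundles} along $\mu$, restrict to $\cU$, and note that it sends $\mu^*f^*\tau$ to $\bigotimes_i\mu^*\tau_i$. Since $\mu^*f^*\tau=\pi^*(q^*\tau)$ has divisor $n\cdot\cC_{t_0}$, comparing multiplicities along $U_v$ in the equality $\sum_i\divisor(\mu^*\tau_i)=n\cdot\cC_{t_0}$ gives $\sum_i n_i(v)=n$. Restricting $\mu^*\Psi$ to $U_v$ and passing to leading coefficients along $U_v$ — which on the left produces exactly the constant $c$ encoded by $\Phi_\Delta$ — would yield the identity $\bigotimes_i\sigma_{v,i}=(\mu_s^*\Psi_s)\circ(\Phi_\Delta\ot\id)$, first on the dense open $U_v\setminus\partial U_v$ and then everywhere since both sides are rational sections of the same line bundle.

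\textbf{Property (iii) — the hard part.} Fix an edge $e$ joining distinct $v,v'$ at $q_e$. By deformation theory of nodal curves over the regular base $\Spec S$, the completed local ring of $\cC$ at $q_e$ is either two smooth sheets glued along a section (a \emph{persisting} node) or $\C[[x,y,u]]/(xy-u^\delta)$ for a unique $\delta=\delta(e)\ge 1$ (a \emph{smoothing} node); in the persisting case I would set $\delta(e):=1$. The relation $xy=u^{\delta(e)}$ (read as $xy=u$ in the persisting case) identifies $T^\vee_{U_v,q_e}\ot T^\vee_{U_{v'},q_e}$ with $(\fm_S/\fm_S^2)^{\ot\delta(e)}$ on leading terms, which I would dualize to the isomorphism $\Phi_e:M^{\ot\delta(e)}\iso T_{U_v,q_e}\ot T_{U_{v'},q_e}$ of datum (b). Setting $m_i(v,e):=\ord_{q_e}(\sigma_{v,i})$, it remains to prove $n_i(v')=n_i(v)+m_i(v,e)\,\delta(e)$ and that $\wt\sigma_{v,i}(q_e)$ and $\wt\sigma_{v',i}(q_e)$ correspond under $\Phi_e$. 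The persisting case is easy: restricting $\mu^*\tau_i$ to each smooth sheet, the key observation forces it to be $u^{n_i(v)}$ (resp.\ $u^{n_i(v')}$) times a unit, so $m_i(v,e)=m_i(v',e)=0$, and agreement along the gluing section forces $n_i(v)=n_i(v')$ with equal leading coefficients. The smoothing case is the main obstacle: trivializing $\mu^*\L_i$ near $q_e$, writing $\mu^*\tau_i$ as a function on $\{xy=u^\delta\}$, and comparing its order of vanishing and leading term along $U_v=\{y=u=0\}$ with those along $U_{v'}=\{x=u=0\}$ by passing between the charts $x\ne 0$ and $y\ne 0$ via $y=u^\delta/x$ and $x=u^\delta/y$. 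The non‑Cartierness of $[U_v]$ and $[U_{v'}]$ at the $A_{\delta-1}$‑singularity (where $[U_v]=-[U_{v'}]$ generates $\Cl\cong\Z/\delta$) forces $n_i(v)\equiv n_i(v')\pmod{\delta(e)}$, and tracking monomials through the substitution pins the quotient down as $m_i(v,e)$ and identifies the two leading terms. This computation in the local ring of the $A_{\delta-1}$‑singularity is the only genuinely delicate step; it is the algebro‑geometric form of Jun Li's predeformability condition. Once it is carried out, assembling $(M,n,\Phi_\Delta)$, $(\delta(e),\Phi_e)$ and $(n_i,\sigma_{v,i})$ with (i)–(iii) verified finishes the proof.
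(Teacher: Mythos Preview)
Your approach is essentially identical to the paper's: set $M = T_{B,t_0}$, take $n_i(v) = \ord_{U_v}(\mu^*\tau_i)$, define $\sigma_{v,i}$ as the leading coefficient of $\mu^*\tau_i$ along $U_v$, and verify (i)--(iii) by local computations at the nodes. One point needs correction: in the persisting-node case the completed local ring is $S[[x,y]]/(xy)$, \emph{not} a smoothing $xy = u$, so there is no relation from which to extract a canonical $\Phi_e$. The paper handles this by defining $\delta(e)$ and $\Phi_e$ \emph{arbitrarily} for persisting nodes; this is harmless because the subsequent argument (agreement along the section through the node) gives $m_i(v,e) = m_i(v',e) = 0$ and $n_i(v) = n_i(v')$, so both the numerical relation $n_i(v') = n_i(v) + m_i(v,e)\,\delta(e)$ and the leading-term identification under $\Phi_e$ hold trivially regardless of the choice. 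The paper also first base-changes so that all nodes of the generic fiber are rational over $L$, which is what makes your ``gluing section'' exist. For the smoothing case, the paper's execution of your monomial-tracking idea is the factorization $\sigma_i = \pi^a x^b y^c h$ with $h$ a unit in the \'etale-local model $\{xy = \pi^\delta\}$; writing $x = \pi^\delta y^{-1}$ on one branch immediately yields $n_i(v) = a + \delta b$, $m_i(v,e) = c - b$, and symmetrically $n_i(v') = a + \delta c$, from which everything follows.
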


\begin{remark}\label{rem:order_sum_zero}
    Property (ii) in Proposition \ref{prop:log_limit} implies that $\bigotimes_{i\in I}\sigma_{v,i}$ is nowhere vanishing. This has a useful numerical consequence: for $v\in V$ and $q_e\in\partial U_v$, we have $\sum_{i\in I}m_i(v,e) = 0$.
\end{remark}

\begin{proof}
    Since $\mu_s:C\to \X_s$ deforms to $\X_\eta$, there is a commutative diagram
    \begin{center}
    \begin{tikzcd}
        \cC \arrow[r,"\mu"] \arrow[d,"g"] & \X \arrow[d,"f"] \\
        B \arrow[r,"\nu"] & \Delta
    \end{tikzcd}    
    \end{center}
    of schemes over $\C$ where
    \begin{itemize}
        \item $B = \Spec(S)$, where $S$ is a DVR given by the local ring of a smooth affine curve over $\C$ at a closed point, with $t,\xi\in B$ being the closed and generic point respectively,
        \item the morphism $\nu: B\to \Delta$ maps $t\in B$ to $s\in \Delta$ and $\xi\in B$ to $\eta\in \Delta$,
        \item $\mu:\cC\to\X$ is a family of stable maps over $B$ whose closed fiber is $\mu_s:C\to\X_s$.
    \end{itemize}
    
    Let $\pi$ be a uniformizer of $S$, let $\mathfrak{n} = (\pi)$ be its maximal ideal and let $L$ be its fraction field. After a base change of DVRs (inducing a finite extension of fraction fields), we ensure that the nodes of the generic fiber $\cC_{\xi}$ are defined over $L$. Each node $\Spec(L)\to\cC_{\xi}$ can be uniquely completed to a section of $\cC\to B$, and this section specializes over $t$ to a node of $C$. The nodes of $C$ obtained in this manner from nodes of $\cC_{\xi}$ will be called \emph{generic nodes} while the remaining nodes of $C$ will be called \emph{special nodes}. 
    
    Define the open subset $\cU := \mu^{-1}(\X^{\reg})\subseteq \cC$ and note that $\cU_t = U$. For convenience, we will denote the restrictions $\mu|_\cU$ and $g|_\cU$ simply by $\mu$ and $g$ respectively.

    With these preparations, we define the desired data as follows.
    \begin{enumerate} [wide, labelwidth=!, labelindent=0pt, label=(\textbf{\alph*})]
        \item Define $M := T_{B,t} = (\mathfrak{n}/\mathfrak{n}^2)^\vee$. Let $\Phi_\Delta:M^{\otimes n}\xrightarrow{\sim} T_{\Delta,s}$ be the isomorphism induced by the ring map $R\to S$ corresponding to $\nu:B\to\Delta$, where $n\ge 1$ is defined by $\m S = \mathfrak{n}^n$.
        \item Let $e\in E$ be an edge joining vertices $v,v'\in V$. The discussion is somewhat different depending on whether $q_e$ is a special node or a generic node.
        
        Suppose $q_e\in U$ is a special node. Then, \'etale locally near $q_e$, the family $\cC\to B$ is given by
        \begin{align}\label{eqn:special_node_model}
            \Spec\left(\frac{S[x,y]}{(xy-\pi^{\delta(e)})}\right)\ \longrightarrow \ \Spec(S),   
        \end{align}
        and this defines $\delta(e)\in\Z_{\ge 1}$.
        We can phrase this intrinsically by saying $\cC$ has an $A_{\delta(e)-1}$ singularity at $q_e$. If $v\ne v'$, then we have a canonical isomorphism $\Phi_e:M^{\otimes\delta(e)}\xrightarrow{\sim} T_{U_v,q_e}\otimes_\C T_{U_{v'},q_e}$. In the \'etale local model \eqref{eqn:special_node_model}, the map $\Phi_e$ is given by $(\partial_\pi)^{\otimes\delta(e)}\mapsto\partial_x\otimes\partial_y$.

        Suppose $q_e\in U$ is a generic node. Then, \'etale locally near $q_e$, the family $\cC\to B$ is given by
        \begin{align}\label{eqn:generic_node_model}
            \Spec\left(\frac{S[x,y]}{(xy)}\right)\ \longrightarrow \ \Spec(S).    
        \end{align}
        In this situation, we define the integer $\delta(e)\in\Z_{\ge 1}$ arbitrarily. When $v\ne v'$, we also define the isomorphism $\Phi_e:M^{\otimes\delta(e)}\xrightarrow{\sim} T_{U_v,q_e}\otimes_\C T_{U_{v'},q_e}$ arbitrarily.
        
        \item Let $i\in I$ and $v\in V$. Consider the section $\sigma_i:=\mu^*\tau_i$ of the line bundle $\mu^*\L_i$ on $\cU$. By assumption, $\sigma_i$ is non-vanishing on the generic fiber $\cU_\xi$ of $\cU\to B$. 
        
        Define $n_i(v)\in\Z_{\ge 0}$ to be the generic order of vanishing of $\sigma_i$ along $U_v$. To explain this in more detail, let $\xi_v$ be the generic point of $U_v$ and note that the local ring $\strusheaf{\cU,\xi_v}$ is a DVR, with uniformizer being the image of $\pi$ under the injective map $S = \strusheaf{B,t}\to \strusheaf{\cU,\xi_v}$ induced by $g$. We can then define $n_i(v)$ to be the valuation of $\sigma_i$ in $\strusheaf{\cU,\xi_v}$, where we use a local trivialization of $\mu^*\L_i$ near $\xi_v$ to regard $\sigma_i$ as an element of $\strusheaf{\cU,\xi_v}$. 
        
        Define $\sigma_{v,i}$ by first regarding $\sigma_i$ as a rational section of the line bundle
        \begin{align*}
            \strusheaf{\cU}(-n_i(v)U)\otimes \mu^*\L_i\ =\ g^*(\strusheaf{B}(-t))^{\otimes{n_i(v)}}\otimes\mu^*\L_i
        \end{align*}
       and then restricting it to $U_v$ using the isomorphism $M = T_{B,t}\xrightarrow{\sim}\strusheaf{B}(t)|_{t}$. Equivalently, we can define $\sigma_{v,i}$ by the explicit formula
        \begin{align}\label{eqn:rescaled_sections_formula}
            \sigma_{v,i} \ =\ (d\pi)^{\otimes n_i(v)}\otimes (\pi^{-n_i(v)}\sigma_i)|_{U_v}.
        \end{align}
        The ideal sheaf of $U_v\setminus\partial U_v\subseteq\cU\setminus\bigcup_{v'\ne v}U_{v'}$ is generated by $\pi$ and the section $\sigma_i$ has no zeros on $\cU_\xi$. It follows that $\sigma_{v,i}$ is regular and nowhere vanishing on $U_v\setminus\partial U_v$.
    \end{enumerate}

    We now check that the above definitions have all the claimed properties.

    \begin{enumerate} [wide, labelwidth=!, labelindent=0pt, label=(\roman*)]
        \item By definition, we have $n_i(v) = 0$ if and only if $\sigma_i$ is non-vanishing at the generic point $\xi_v$ of $U_v$, i.e., $\mu_s(U_v)\not\subseteq X_i$. In this case, \eqref{eqn:rescaled_sections_formula} implies that $\sigma_{v,i} = \sigma_i|_{U_v} = (\mu_s^*\tau_i)|_{U_v}$.
        
        \item The valuation of the section $\bigotimes_{i\in I}\sigma_i$ in the DVR $\strusheaf{\cU,\xi_v}$ is $\sum_{i\in I} n_i(v)$. Under the pullback of the isomorphism \eqref{eqn:snc_degeneration_line_bundles} under $\mu$, this section becomes $\mu^*(f^*\tau) = g^*(\nu^*\tau)$. Since $\pi$ is simultaneously a uniformizer for both the DVRs $S$ and $\strusheaf{\cU,\xi_v}$, we conclude that $\sum_{i\in I}n_i(v)$ is the same as the valuation of $\nu^*\tau$ in $S$, which is $n$.  
        
        Regard $\nu^*\tau$ as a section of the line bundle
        \begin{align*}
            \strusheaf{B}(-nt)\otimes\nu^*(\strusheaf{\Delta}(s)).
        \end{align*}
        Then, its image under the isomorphism $(\strusheaf{B}(-nt)\otimes\varphi^*(\strusheaf{\Delta}(s)))|_{t}\xrightarrow{\sim}\Hom_\bC(T_{B,t}^{\otimes n},T_{\Delta,s})$ coincides with $\Phi_\Delta$. The claimed formula for $\bigotimes_{i\in I}\sigma_{v,i}$ now follows from the fact that the section $(f^*\tau)|_{\X^{\reg}}$ maps to the section $\bigotimes_{i\in I}\tau_i$ under the isomorphism \eqref{eqn:snc_degeneration_line_bundles}.  
        
        \item Fix $i\in I$ and let $e$ be an edge joining $v\ne v'$. Using a local trivialization of $\mu^*\L_i$ near $q_e$, we regard $\sigma_i = \mu^*\tau_i$ as a regular function defined on a neighborhood of $q_e$ in $\cU$. Since the assertions to be checked are \'etale local, we will work in the local models \eqref{eqn:special_node_model} and \eqref{eqn:generic_node_model}. In terms of these local models, assume that $U_v$ is given by $\pi = 0$, $x = 0$ and that $U_{v'}$ is given by $\pi = 0$, $y = 0$.
        
        Suppose $q_e$ is a special node. Since $\sigma_i$ has no zeros on $\cU_\xi$, we can write $\sigma_i = \pi^a x^b y^ch$ for some $a,b,c\in\Z_{\ge 0}$ and $h\in\strusheaf{\cU}^\times$. Since $y\in\strusheaf{\cU,\xi_v}^\times$, we can write $x = \pi^{\delta(e)}y^{-1}\in\strusheaf{\cU,\xi_v}$ which gives $\sigma_i = \pi^{a + \delta(e)b}y^{c-b}h$. From this, we deduce that $n_i(v) = a + \delta(e)b$,
        \begin{align*}
            \sigma_{v,i} = (d\pi)^{\otimes(a + \delta(e)b)}\otimes (y^{c-b}h)|_{\pi = 0,\,x = 0},
        \end{align*}
        and $m_i(v,e) = c - b$. Similarly, we get $n_i(v') = a + \delta(e)c$ and $m_i(v',e) = b - c$, which shows that $n_i(v') = n_i(v) + m_i(v,e)\delta(e)$. Finally, note that the lowest order terms
        \begin{align*}
            \wt\sigma_{v,i}(q_e) &= (d\pi)^{\otimes(a + \delta(e)b)}\otimes (dy)^{\otimes(c-b)}\otimes h(q_e),\\
            \wt\sigma_{v',i}(q_e) &= (d\pi)^{\otimes(a + \delta(e)c)}\otimes (dx)^{\otimes(b-c)}\otimes h(q_e)    
        \end{align*}
        coincide under the isomorphism $\Phi_e$, which identifies $\partial_x\otimes\partial_y$ with $(\partial_\pi)^{\otimes\delta(e)}$.

        Suppose $q_e$ is a generic node. Let $\wt q_e: B\to\cC$ be the completion of the node $\Spec(L)\to\cC_\xi$ which specializes to $q_e$ over $t\in B$. The section $\wt q_e$ corresponds to the section of \eqref{eqn:generic_node_model} given by 
        \begin{align*}
            \Spec(S)\ \longrightarrow \ \Spec\left(\frac{S[x,y]}{(xy)}\right),\quad \ x,y\mapsto 0.
        \end{align*}
        Normalize an \'etale neighborhood of the section $\wt q_e$ in $\cU$ to break it into two components $\cU_v$ and $\cU_{v'}$ such that $(\cU_v)|_t = U_v$ and $(\cU_{v'})|_t = U_{v'}$. The lifts of the section $\wt q_e$ to $\cU_v$, $\cU_{v'}$ are given in the local model by
        \begin{align*}
            \Spec(S)\ \longrightarrow \ \Spec\left(\frac{S[x,y]}{(x)}\right) = \Spec\left(S[y]\right),\quad \ y\mapsto 0,\\
            \Spec(S)\ \longrightarrow\  \Spec\left(\frac{S[x,y]}{(y)}\right) = \Spec\left(S[x]\right),\quad \ x\mapsto 0.
        \end{align*}
        respectively.
        Since $\sigma_i$ has no zeros on $\cU_\xi$, we can find $h_v\in \strusheaf{\cU_v}^\times$ and $h_{v'}\in\strusheaf{\cU_{v'}}^\times$ with
        \begin{align*}
            \sigma_i|_{\cU_v} = \pi^{n_i(v)}h_v\quad\text{and}\quad
            \sigma_i|_{\cU_{v'}} = \pi^{n_i(v')}h_{v'}.
        \end{align*}
        Since these two expressions for $\sigma_i$ must agree on the section $\wt q_e$, we must have $n_i(v) = n_i(v')$, $h_v(q_e) = h_{v'}(q_e)\ne 0$ and $m_i(v,e) = m_i(v',e) = 0$. Thus, $n_i(v') = n_i(v) + m_i(v,e)\delta(e)$ and 
        \begin{align*}
            \wt\sigma_{v,i}(q_e) = \sigma_{v,i}(q_e) = \sigma_{v',i}(q_e) = \wt\sigma_{v',i}(q_e).
        \end{align*}
    \end{enumerate}
    This concludes the proof.
\end{proof}

\subsection{Deducing Lemma \ref{lemma:divisorial_multiplicity_matching} from Proposition \ref{prop:log_limit}} \label{subsec:appx_deducing_lemma}

Restrict to the case where $\X_s = X_1\cup_Z X_2$ is the union of two smooth varieties $X_1$ and $X_2$ meeting along a smooth divisor $Z = X_1\cap X_2$.

Recall from the statement of Lemma \ref{lemma:divisorial_multiplicity_matching} that $F\subseteq C$ is a connected component of $\mu_s^{-1}(Z)$ which is contracted to a point $z = \mu_s(F)\in \X^{\reg}$. Moreover, for $i=1,2$, we define $C_i$ to be the union of the irreducible components of $C$ which map under $\mu_s$ to $X_i$ (but not $Z$) and meet $F$. Given that the non-constant stable map $\mu_s:C\to\X_s$ deforms to $\X_\eta$, we need to show that
\begin{align*}
    \sum_{p \in F \cap C_1} m_p(C_1 ; Z) \ =\ \sum_{p \in F \cap C_2} m_p(C_2 ; Z),
\end{align*}
where $m_p(C_i ; Z)$ is the multiplicity at $p$ of the divisor on $C_i$ obtained by pulling back $Z$ under the map $\mu_s|_{C_i}:C_i\to X_i$ for $i=1,2$.

Apply Proposition \ref{prop:log_limit} to produce the data of (a), (b), (c) satisfying properties (i), (ii), (iii). As before, write $\Gamma = (V,E)$ for the dual graph of $U = \mu_s^{-1}(\X^{\reg})$, so that $F\subseteq U$. Fixing $i=1,2$, note that each irreducible component of $C_i$ meets $U$. Let $V_i\subseteq V$ be the set of vertices corresponding to the irreducible components of $U\cap C_i$. Since the irreducible components of $C_i$ and $U\cap C_i$ are in bijection, we will write $C_{v,i}$ in place of $U_v$ when $v\in V_i$.

\begin{lemma}\label{lemma:appx_auxiliary_observations}
    Consider $p\in F\cap C_{v,1}$ for some $v\in V_1$. Then, we have $n_2(v) = 0$, $\sigma_{v,2} = (\mu_s^*\tau_2)|_{C_{v,1}}$ and $\ord_p(\sigma_{v,2}) = m_p(C_1;Z)$. Moreover, there exists $v'\in V\setminus V_1$ such that $p\in U_{v'}$.
\end{lemma}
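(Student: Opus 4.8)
The plan is to read off the first two assertions directly from property (i) of Proposition~\ref{prop:log_limit}, and to obtain the last two from a local analysis of $C$ at $p$ using properties (ii) and (iii).

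First I would note that, since $v\in V_1$, the component $C_{v,1}=U_v$ is of type $X_1$: its image under $\mu_s$ lies in $X_1$ but not in $Z=X_1\cap X_2$, hence not in $X_2$. Applying property (i) of Proposition~\ref{prop:log_limit} with $i=2$ immediately gives $n_2(v)=0$ together with $\sigma_{v,2}=(\mu_s^*\tau_2)|_{C_{v,1}}$. In particular $\sigma_{v,2}$ is a pole-free section of $\mu_s^*\L_2|_{C_{v,1}}$ that does not vanish identically (because $\mu_s(C_{v,1})\not\subseteq X_2=\{\tau_2=0\}$); and since $p\in F$ is sent to $z\in Z\subseteq X_2$, we get $\sigma_{v,2}(p)=\tau_2(z)=0$, so $\ord_p(\sigma_{v,2})\ge 1$.

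Next I would pin down the structure of $C$ near $p$. By property (ii) of Proposition~\ref{prop:log_limit} (see also Remark~\ref{rem:order_sum_zero}), the section $\bigotimes_{i\in I}\sigma_{v,i}$ is nowhere vanishing on $U_v\setminus\partial U_v$; if $p$ were not in $\partial U_v$, all $\sigma_{v,i}$ would be regular at $p$ and their product nonvanishing there, forcing each factor to be nonzero at $p$, contradicting $\sigma_{v,2}(p)=0$. Hence $p\in\partial U_v$, so $p$ lies on some component $U_{v'}$ with $v'\ne v$ and realizes the node $q_e$ of an edge $e$ joining the distinct vertices $v,v'$. I then claim $v'\notin V_1$: otherwise $C_{v',1}$ is again of type $X_1$, so $n_2(v')=0$, and property (iii) applied to $e$ with $i=2$ yields $0=n_2(v')=n_2(v)+m_2(v,e)\,\delta(e)=m_2(v,e)\,\delta(e)$, whence $m_2(v,e)=\ord_{q_e}(\sigma_{v,2})=0$, contradicting $\ord_p(\sigma_{v,2})\ge 1$. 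Running the identical computation with any other vertex $w\in V_1$, $w\ne v$, in place of $v'$ shows that $p$ cannot lie on $C_{w,1}$ either; so $C_{v,1}$ is the unique component of $C_1$ through $p$, and (by the same vanishing/matching dichotomy applied to a putative self-node) $C_{v,1}$ is smooth at $p$, so that $\ord_p(\sigma_{v,2})$ is unambiguous.

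Finally, for the multiplicity assertion I would invoke that the degeneration is SNC: near $z\in\X^{\reg}\cap Z$ the components $X_1,X_2$ meet transversally, so $\L_2|_{X_1}\cong\struct{X_1}(Z)$ and $\tau_2|_{X_1}$ is a defining section of the reduced Cartier divisor $Z$ on $X_1$. Since $\mu_s(C_{v,1})\not\subseteq Z$, the zero divisor of $\sigma_{v,2}=(\mu_s|_{C_{v,1}})^*(\tau_2|_{X_1})$ on $C_{v,1}$ is exactly the pullback divisor $(\mu_s|_{C_{v,1}})^*Z$, whose coefficient at $p$ is by definition $m_p(C_{v,1};Z)$; as $C_{v,1}$ is the unique component of $C_1$ through $p$, this equals $m_p(C_1;Z)$, so $\ord_p(\sigma_{v,2})=m_p(C_1;Z)$. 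I expect the main obstacle to be the middle paragraph: one must organize the local picture at $p$ so as to simultaneously exhibit a branch outside $V_1$ and rule out further type-$X_1$ branches, both of which hinge on feeding $n_2(v)=0$ into the order-matching relation (iii); the SNC transversality input for identifying $\tau_2|_{X_1}$ with a reduced equation of $Z$, though routine, is what makes the final multiplicity bookkeeping come out on the nose.
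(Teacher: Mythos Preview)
Your proof is correct and follows essentially the same approach as the paper's: invoke property (i) for $n_2(v)=0$ and $\sigma_{v,2}=(\mu_s^*\tau_2)|_{C_{v,1}}$, use the SNC identification $\L_2|_{X_1}\cong\strusheaf{X_1}(Z)$ for the multiplicity formula, observe that $\sigma_{v,2}(p)=0$ forces $p\in\partial U_v$, and then use property (iii) to rule out $v'\in V_1$. The only differences are cosmetic: the paper places the multiplicity identification before the node argument and uses the consequence $m_2(v,e)+m_2(v',e)=0$ of (iii) directly rather than the equation $n_2(v')=n_2(v)+m_2(v,e)\,\delta(e)$; and you add an explicit check that $C_{v,1}$ is the unique $C_1$-component through $p$ (which the paper leaves implicit, and which in fact is automatic once you know $p\in\partial U_v$ since a node has exactly two branches).
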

\begin{proof}
    Since $\mu_s(C_{v,1})\not\subseteq X_2$, property (i) in Proposition \ref{prop:log_limit} gives $n_2(v) = 0$ and $\sigma_{v,2} = (\mu_s^*\tau_2)|_{C_{v,1}}$. The restrictions of $\cL_2 = \strusheaf{\X^{\reg}}(\X^{\reg}\cap X_2)$ and $\strusheaf{X_1}(Z)$ to $\X^{\reg}\cap X_1$ are canonically isomorphic and their tautological sections get identified under this isomorphism. It follows that we have $\ord_p(\sigma_{v,2}) = m_p(C_1;Z)$.

    Since $p\in F$, the section $\sigma_{v,2}$ vanishes at $p$. Since $\sigma_{v,2}$ is non-vanishing away from $\partial C_{v,1}$, the point $p$ must lie on $U_{v'}$ for some $v\ne v'\in V$. If we had $v'\in V_1$, then the preceding paragraph shows that $\sigma_{v',2}$ is regular (and vanishes) at $p$, contradicting property (iii) of Proposition \ref{prop:log_limit}, which gives $\ord_p(\sigma_{v,2}) + \ord_p(\sigma_{v',2}) = 0$. Thus, $v'\in V\setminus V_1$.
\end{proof}

If $F$ is $0$-dimensional, then write $F = \{p\}$. Since $p$ lies on at most two irreducible components of $C$, we must have $|V_1| + |V_2|\le 2$. As $\mu_s$ is non-constant, at least one of $V_1$ and $V_2$ is non-empty. Without loss of generality, assume $V_1\ne\emptyset$ and pick $v\in V_1$. By Lemma \ref{lemma:appx_auxiliary_observations}, we get $v'\in V\setminus V_1$ such that $p\in U_{v'}$. Since $F = \{p\}$ is a connected component of $\mu_s^{-1}(Z)$, we must have $v'\in V_2$. This shows $|V_1| = |V_2| = 1$. By property (iii) of Proposition \ref{prop:log_limit}, Remark \ref{rem:order_sum_zero} and Lemma \ref{lemma:appx_auxiliary_observations}, we get
\begin{align*}
    0 \ =\ \ord_{p}(\sigma_{v,2}) + \ord_{p}(\sigma_{v',2}) \ =\ \ord_{p}(\sigma_{v,2}) - \ord_{p}(\sigma_{v',1})\ =\ m_p(C_1;Z) - m_p(C_2;Z).
\end{align*}

If $F$ is $1$-dimensional, let $V_0\subseteq V$ be set of the vertices corresponding to the irreducible components of $U\cap F = F$. For $v\in V_0$, we will write $F_v$ in place of $U_v$. Since $\mu_s(F)$ is a point in $\X^{\reg}\cap Z$, the bundles $\mu_s^*\L_i$ are trivial on $F_v$ for all $v\in V_0$ and $i=1,2$. Since $F_v$ is a proper curve and $\sigma_{v,i}$ is regular and non-vanishing at all nodes of $F_v$, we conclude that $\sigma_{v,i}$ must have as many zeros as it has poles (counted with multiplicity) for all $v\in V_0$ and $i=1,2$. Fixing $i=1$ and taking the sum over all $v\in V_0$, we get the relation
\begin{align*}
    \sum_{v\in V_0}\sum_{q_e\in\partial F_v}\ord_{q_e}(\sigma_{v,1}) = 0.
\end{align*}
For distinct $v,v'\in V_0$ and $q_e\in F_v\cap F_{v'}$, we have $\ord_{q_e}(\sigma_{v,1}) + \ord_{q_e}(\sigma_{v',1}) = 0$ by property (iii) of Proposition \ref{prop:log_limit}. Such terms therefore cancel in pairs and the above relation reduces to
\begin{align*}
    \sum_{v\in V_0}\sum_{p\in F_v\cap C_1}\ord_{p}(\sigma_{v,1}) + \sum_{v\in V_0}\sum_{p\in F_v\cap C_2} \ord_{p}(\sigma_{v,1}) = 0.
\end{align*}
Using Remark~\ref{rem:order_sum_zero}, we can rewrite this as
\begin{align*}
    -\sum_{v\in V_0}\sum_{p\in F_v\cap C_1}\ord_p(\sigma_{v,2}) + \sum_{v\in V_0}\sum_{p\in F_v\cap C_2}\ord_p(\sigma_{v,1}) = 0.
\end{align*}
Using property (iii) in Proposition \ref{prop:log_limit}, this becomes
\begin{align*}
    \sum_{v\in V_1}\sum_{p\in F\cap C_{v,1}}\ord_p(\sigma_{v,2}) - \sum_{v\in V_2}\sum_{p\in F\cap C_{v,2}}\ord_p(\sigma_{v,1}) = 0.
\end{align*}
By Lemma \ref{lemma:appx_auxiliary_observations}, $\ord_p(\sigma_{v,2}) = m_p(C_1;Z)$ for all $v\in V_1$ and $p\in F\cap C_{v,1}$. Similarly, for all $v\in V_2$ and $p\in F\cap C_{v,2}$, we have $\ord_p(\sigma_{v,1}) = m_p(C_2;Z)$. This completes the proof of Lemma~\ref{lemma:divisorial_multiplicity_matching}.


\bibliographystyle{alpha}
\bibliography{biblio.bib}

\footnotesize{
\textsc{Department of Mathematics, Harvard University, Massachusetts 02138} \\
\indent \textit{E-mail address:} \href{mailto:nathanchen@math.harvard.edu}{nathanchen@math.harvard.edu}

\textsc{Department of Mathematics, Stanford University, California 94305} \\
\indent \textit{E-mail address:} \href{mailto:bvchurch@stanford.edu}{bvchurch@stanford.edu}

\textsc{Department of Mathematics, University of Maryland, College Park, Maryland 20742} \\
\indent \textit{E-mail address:} \href{mailto:jzhao81@umd.edu}{jzhao81@umd.edu}

\textsc{Department of Mathematics, Stanford University, California 94305} \\
\indent \textit{E-mail address:} \href{mailto:mohans@stanford.edu}{mohans@stanford.edu}
}

\end{document}